\title{Huygens and $\pi$}
\author{Mark B. Villarino and Joseph C. Várilly
\\[9pt]
{\small
Escuela de Matemática, Universidad de Costa Rica, 
San José 11501, Costa Rica}}
\date{\today}
\DeclareMathOperator{\pgram}{parallelogram} 
\DeclareMathOperator{\polygon}{polygon}   
\DeclareMathOperator{\quadrangle}{quadrangle} 
\DeclareMathOperator{\sector}{sector}     
\DeclareMathOperator{\segment}{segment}   
\newcommand{\dl}{\delta}            
\newcommand{\Sg}{\Sigma}            
\renewcommand{\th}{\theta}          
\newcommand{\sH}{\mathcal{H}}       
\newcommand{\sM}{\mathcal{M}}       
\newcommand{\sR}{\mathcal{R}}       
\renewcommand{\leq}{\leqslant}      
\newcommand{\sst}{\scriptstyle}     
\newcommand{\tri}{\bigtriangleup\,} 
\renewcommand{\.}{\cdot}            
\newcommand{\bull}{\mathord{\scriptstyle\bullet}} 
\newcommand{\half}{{\mathchoice{\thalf}{\thalf}{\shalf}{\shalf}}} 
\newcommand{\quarter}{\tfrac{1}{4}}   
\newcommand{\shalf}{{\scriptstyle\frac{1}{2}}} 
\newcommand{\thalf}{\tfrac{1}{2}}     
\newcommand{\third}{\tfrac{1}{3}}     
\newcommand{\twothirds}{\tfrac{2}{3}} 
\newcommand{\arc}[1]{\overset{\frown}{#1}} 
\newcommand{\mot}[1]{\enspace\text{#1}\enspace} 
\newcommand{\word}[1]{\quad\text{#1}\quad} 
\titleformat{\section}{\normalfont\large\bfseries}
                      {\thesection}{1em}{}
\titlespacing{\section}{0pt}{*3.5}{*2.3}
\titleformat{\subsection}{\normalfont\normalsize\bfseries}
                         {\thesubsection}{0.7em}{}
\titlespacing{\subsection}{0pt}{*3.25}{*1.5}
\titleformat{\subsubsection}{\normalfont\small\bfseries}
                         {\thesubsubsection}{0.5em}{}
\titlespacing{\subsubsection}{0pt}{*3.0}{*1.2}
\titleformat{\paragraph}[runin]{\normalfont\itshape}{}{0pt}{}[.]
\titlespacing{\paragraph}{0pt}{\medskipamount}{\wordsep}
\renewcommand{\@dotsep}{160} 
\theoremstyle{plain}
\newtheorem{thm}{Theorem}[section]  
\newtheorem{lema}[thm]{Lemma}       
\newtheorem{corl}[thm]{Corollary}   
\newtheorem{prop}[thm]{Proposition} 
\theoremstyle{remark}
\newtheorem{remk}[thm]{Remark}      
\numberwithin{equation}{section}    
\DeclareRobustCommand{\qned}{
  \ifmmode 
  \else \leavevmode\unskip\penalty9999 \mbox{}\nobreak\hfill
  \fi
  \quad\mbox{\qnedsymbol}}
\newcommand{\qnedsymbol}{$\boxminus$} 
\newcommand{\hideqed}{\renewcommand{\qed}{}} 
\begin{document}

\maketitle

\begin{abstract}
The Dutch scientist Christiaan Huygens refined Archimedes' celebrated
geometrical computation of $\pi$ to its highest point. Yet the rich
content of his beautiful treatise \textit{De circuli magnitudine
inventa} (1654) has apparently never been presented in modern form.
Here we offer a detailed and contemporary development of several of
his most striking results. We also make a historical conjecture
concerning Archimedes' trisection figure.
\end{abstract}

\tableofcontents     

\addcontentsline{toc}{section}{\vspace*{-3pc}} 


\section{Introduction} 
\label{sec:intro}

Over two thousand years ago Archimedes of Syracuse (about
275--212\,B.C.), the greatest mathematician of antiquity (and just
possibly ever) authored his monograph entitled \emph{The Measurement
of a Circle} \cite[pp.~91--98]{Heath1912}, in which he proved the
following celebrated inequality:
\begin{equation}
\boxed{3\frac{10}{71} < \pi < 3\frac{1}{7}}\,.
\label{eq:Arch1} 
\end{equation}
(Both bounds are accurate to \emph{two} decimal places.)

We translate Archimedes' own statement of \eqref{eq:Arch1} since it
might surprise the modern reader:

\begin{thm}
The circumference of any circle is greater than three times the
diameter and exceeds it by a quantity less than a seventh part of the
diameter but greater than ten seventy-first parts.
\end{thm}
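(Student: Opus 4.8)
The plan is to follow Archimedes' own route: trap the circle between regular polygons inscribed in and circumscribed about it, and let the number of sides grow until the two bounds pin $\pi$ inside the stated interval. Since the circumference of a circle of radius $r$ is $2\pi r$ and its diameter is $2r$, the verbal statement is precisely the modern inequality $3\frac{10}{71} < \pi < 3\frac17$ already recorded in \eqref{eq:Arch1}. The one foundational input I would take as given is Archimedes' bracketing postulate: a convex arc is longer than its chord yet shorter than any convex broken line enclosing it. Applied to a regular $n$-gon, this gives perimeter-to-diameter ratios $n\sin(\pi/n)$ (inscribed) and $n\tan(\pi/n)$ (circumscribed) obeying
\[
  n\sin\frac{\pi}{n} < \pi < n\tan\frac{\pi}{n}, \qquad n \geq 3,
\]
so that it suffices to evaluate both sides at one large $n$; with Archimedes I would take $n = 96$.

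First I would establish the doubling recursions carrying the central half-angle $\theta = \pi/n$ to $\theta/2$. For the circumscribed family, dropping the apothem $OA = r$ onto a side and bisecting the vertex angle, the angle-bisector theorem yields the clean pair $\cot\frac\theta2 = \cot\theta + \csc\theta$ and $\csc\frac\theta2 = \sqrt{1 + \cot^2\frac\theta2}$; a dual chord construction handles the inscribed family. Starting from the hexagon, where $\theta = 30^\circ$ with $\cot 30^\circ = \sqrt3$ and $\csc 30^\circ = 2$, four bisections pass through $15^\circ$, $7\half^\circ$ and $3\frac34^\circ$ to reach the $96$-gon, so the whole computation reduces to iterating two elementary rules whose only non-rational ingredient is a repeated square root.

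The genuine obstacle — and the heart of Archimedes' achievement — is to keep those square roots under strict rational control while keeping the fractions manageable. One must first bound $\sqrt3$ on the correct side via the celebrated estimates $\frac{265}{153} < \sqrt3 < \frac{1351}{780}$, and then, at each of the four doublings, replace every newly produced surd by a rational bound oriented so as never to weaken the inequality being propagated: consistent upper bounds on each $\csc$ in the circumscribed chain (to make $96\tan(\pi/96)$ an honest overestimate of $\pi$) and consistent lower bounds throughout the inscribed chain. The delicacy is to choose these approximations tight enough that the final comparison still lands inside the target interval, yet with small enough denominators to remain tractable — exactly the relentless bookkeeping that fills Archimedes' text.

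Carrying the recursions to $n = 96$ and simplifying, I expect the circumscribed side to yield a perimeter-to-diameter ratio below $14688 : 4673\half$, hence below $22 : 7 = 3\frac17$, and the inscribed side to yield a ratio above $6336 : 2017\quarter$, hence above $223 : 71 = 3\frac{10}{71}$. Since these ratios bracket $\pi$, the two halves of \eqref{eq:Arch1} follow, and multiplying by the diameter restores the verbal form of the theorem. The geometric recursions and the closing numerical comparisons should be routine once established; the real work, as stressed above, is the disciplined handling of the nested square-root approximations.
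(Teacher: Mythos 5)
Your proposal is correct and follows essentially the same route as the paper, which simply recounts Archimedes' own argument: compress the circle between inscribed and circumscribed regular hexagons, apply the half-angle recursion (your $\cot\frac{\theta}{2} = \cot\theta + \csc\theta$ is exactly the paper's identity \eqref{eq:trig}), iterate four doublings to the $96$-gon while controlling the surds by one-sided rational bounds such as $\frac{265}{153} < \sqrt{3} < \frac{1351}{780}$, and close with the very same fractions $\frac{14688}{4673\frac{1}{2}} < 3\frac{1}{7}$ and $\frac{6336}{2017\frac{1}{4}} > 3\frac{10}{71}$ cited in \eqref{eq:upper} and \eqref{eq:lower}. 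There is nothing to add beyond noting that, like the paper, you defer the arithmetic bookkeeping rather than carry it out in full.
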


Observe that there is \emph{no mention of the constant}~$\pi$. Indeed,
although Euclid proved in Proposition XII.2 of \textit{Elements} that
the ratio [of areas] of a circle to the square on its diameter is
constant, Greek mathematicians never had a special name or notation
for~it.

We recall that Archimedes proved \eqref{eq:Arch1} in three steps:
\begin{itemize}
\item
First, he ``compressed'' a circle between inscribed and
circumscribed regular \textit{hexagons}.

\item
Second, he proved (in geometric guise) the identity
\begin{equation}
\cot\Bigl( \frac{\th}{2} \Bigr) = \cot\th + \sqrt{\cot^2\th + 1}.
\label{eq:trig} 
\end{equation}

\item
Third, he used \eqref{eq:trig} to \textit{recursively} compute the
perimeters of inscribed and circumscribed regular $12$-gons,
$24$-gons, $48$-gons, and finally $96$-gons.
\end{itemize}

For the circumscribed $96$-gon, Archimedes obtained \cite{Heath1912}:              
\begin{equation}
\frac{\text{perimeter of $96$-gon}}{\text{diameter}}
< \frac{14688}{4673\half} = 3 + \frac{667\half}{4673\half}
< 3 + \frac{667\half}{4672\half} = 3\frac{1}{7}\,.
\label{eq:upper} 
\end{equation}
For the inscribed $96$-gon, he obtained
\begin{equation}
\frac{\text{perimeter of $96$-gon}}{\text{diameter}}
> \frac{6336}{2017\frac{1}{4}}
= 3 + \frac{10}{71} + \frac{37}{572899} > 3\frac{10}{71}\,.
\label{eq:lower} 
\end{equation}
 
Subsequent generations of mathematicians sought closer bounds on~$\pi$
by increasing the number of sides of the inscribed and circumscribed
polygons but, until the advent of symbolic algebra and calculus,
Archimedes' procedure remained the paradigm \textit{for almost two
thousand years!}

Some 1900 years later, in 1654, the 25-year-old Dutch mathematician
Christiaan Huygens applied a bewildering \textit{tour de force} of
elementary geometry to the perimeter of an inscribed regular $60$-gon
to obtain the following spectacular improvement of~\eqref{eq:Arch1}:
\begin{equation}
\boxed{\strut 3.1415926533 < \pi < 3.1415936538}\,.
\label{eq:Huygens-estimate} 
\end{equation}

He did so in the last section of his brilliant treatise \textit{De
circuli magnitudine inventa} \cite{Huygens1654} which, for brevity, we
shall simply call \textit{Inventa}. In the preface to
\textit{Inventa}, Huygens declares that (up to his time) the only
theorem in circle quadrature with a proper proof states that the
perimeter of a circle is bounded above and below by the perimeters of
a circumscribed and inscribed polygon, respectively, and that his
treatise will do more.

His assessment is rather modest since \textit{Inventa} not only is
epoch-making for circle quadrature -- as shown, for example, by
\eqref{eq:Huygens-estimate} -- but is also one of the most beautiful
and important elementary geometric works ever written, and like the
\textit{Measurement} of Archimedes, will retain its value even if its
results can be now obtained much more quickly using modern analysis.

\textit{Inventa} not only contains the estimates
\eqref{eq:Huygens-estimate}, but also the first proofs in the history
of mathematics of the famous inequalities:
\begin{equation}
\boxed{\frac{3 \sin x}{2 + \cos x} \leq x 
\leq \frac{2}{3} \sin x + \frac{1}{3} \tan x}
\label{eq:Cusa-and-Snell} 
\end{equation}
where $0 \leq x \leq \frac{\pi}{2}$.

Huygens proves them (geometrically!) as Theorems XII and~XIII of his
treatise. The first proof of any famous theorem acquires a historical
and methodological importance, all the more so if the theorem had
remained stated but unproved for some time. The lower bound is
apparently due to Nikolaus of Cusa \cite{Cusanus1514} while the upper
bound is due to Snell (Willebrord Snellius) \cite{Snell1621}, but
neither gave a rigorous proof. Huygens asserted, correctly, that his
treatise presented the proofs which were lacking.

Notwithstanding their antiquity, the inequalities of Cusa and Snell
continue to be a source of contemporary research. See, for example,
\cite{BhayoS2015, Nelsen2020, Chouikha2022}.

Moreover, Huygens shows his wonderful originality by transforming
Archimedes' exact determination of the barycenter of a parabolic
segment into \textit{barycentric inequalities} 
for circular segments.
The latter become even more precise \textit{arc-length inequalities},
namely:
\begin{gather}
\boxed{x < \sin x + \frac{10(4\sin^2\frac{x}{2} - \sin^2 x)}
{12\sin\frac{x}{2} + 9\sin x}}
\label{eq:Huygens-upper} 
\\
\shortintertext{and}
\boxed{x > \sin x + \frac{10(4\sin^2\frac{x}{2} - \sin^2 x)}
{12\sin\frac{x}{2} + 9\sin x + 8\dfrac{(2\sin\frac{x}{2} - \sin x)^2}
{12\sin\frac{x}{2} + 9\sin x}}}
\label{eq:Huygens-lower} 
\end{gather}
both valid in the range $0 < x < \pi$.

\goodbreak 

These two barycentric inequalities are the tools that Huygens uses to
prove the results \eqref{eq:Huygens-estimate} at the end of his
\emph{Inventa}.%
\footnote{%
Huygens' \emph{Inventa} comprises 20 ``propositions'', consisting of
16 theorems interleaved with 4 ``problems''. With this dual labeling,
the last section is called Problem~IV and also Proposition~XX.}
This is by no means straightforward, but he carries out in great
detail the proof of the first inequality \eqref{eq:Huygens-upper}.
 
He never published a proof of the second inequality
\eqref{eq:Huygens-lower}. It can be proved without difficulty with
modern techniques; this was first achieved, to our knowledge, by Iosif
Pinelis in a discussion with one of us on \textit{Math Overflow}
\cite{Pinelis2024}.
 
We also point out that numerical analysts describe the three formulas
\eqref{eq:Cusa-and-Snell}, \eqref{eq:Huygens-upper},
\eqref{eq:Huygens-lower} as historically the first examples of
\textit{Richardson extrapolation}, in which suitable algebraic
combinations of simple functions produce highly accurate
approximations. These examples appeared some three centuries before
the papers of L. Richardson who originated the modern method
\cite{Richardson1927}.

It is not easy for the modern reader to find a good presentation of
Huygens' beautiful geometrical proofs; one must consult the
\textit{Inventa} itself. Sadly, even the \textit{Inventa} presents
difficulties to today's reader, for Huygens chose to present
\textit{Inventa} in a strict Euclidean--Archimedean format:
\begin{itemize}[nosep]
\item
The proofs are entirely synthetic, without contextual analysis or
motivation.
\item
There is no algebraic notation; proportions and inequalities are
written out as long sentences in words.
\item
The main steps are not set out separately. Hence, intricate proofs
comprise a single paragraph of several pages of running printed text.
\end{itemize}
These criticisms of format also apply to the French
\cite{Huygens-v12}, German \cite{Rudio1892}, and -- somewhat less~so
-- to the English translation \cite{Pearson1923} of \textit{Inventa},
since they are strictly literal.

A few studies of Huygens' work are available. For instance, the long
paper by Schuh \cite{Schuh1914}, from~1914, covers some of the
thematic ground. Later, an important paper by Hoffman
\cite{Hofmann1966} considered the historical sources of Huygens'
\emph{Inventa} and gave a general description of its contents. Both
authors correctly asserted that the idea behind Huygens' treatise was
to approximate a circular segment by a parabolic one. Hofmann briefly
described Huygens' proofs of the Cusa and Snell inequalities and
discussed the ideas behind Huygens' barycentric inequalities. However,
his purpose differs from ours in that he replaced many of Huygens'
proofs with his own, including a calculus proof of what we call the
Grossehilfsatz (see Appendix~\ref{app:helpful}). In this paper, we
resurrect Huygens' own original proofs, in some detail, so that the
modern reader can appreciate their geometric beauty.

Our exposition presents Huygens' proofs of \eqref{eq:Cusa-and-Snell}
and makes some of the rich content of \textit{Inventa} available in
a modern form.

\paragraph{The fundamental idea in Huygens' tract}
is simple and brilliant: Huygens approximates the area of a
\textit{circular} segment (and thus, sector) by the area of a suitable
\textit{parabolic} segment.

Why?

Because Archimedes exhaustively investigated the metrical properties
of a parabolic segment, and so Huygens is able to transform
Archimedes' \textit{exact equations} for the area of a parabolic
segment into \textit{inequalities} for the area of a circular segment.
But the area of a circular segment is a simple function of its radius
and its arc length. Therefore, the area inequalities become the
aforementioned \textit{arc length inequalities}.

The same idea undergirds Huygens' barycentric inequalities: the exact
position of the barycenter of a parabolic segment, as determined by
Archimedes, becomes the approximate location of the barycenter of a
\textit{circular} segment. Since Huygens determined the exact location
of the latter, the relation between the two barycenters yields an
arc-length inequality.

\paragraph{Plan of the paper}
Section~\ref{sec:Cusa-and-Snell} exposits Huygens' proofs of Nikolaus
of Cusa's \emph{lower} bound on the circumference of a circle, and
Snell's \emph{upper} bound. Both bounds were originally proposed
without proof by the mentioned authors.

Section~\ref{sec:barycenter} presents Huygens' proofs of his own
original barycentric inequalities, the last of which Huygens stated
without proof. These inequalities allowed Huygens to obtain upper and
lower bounds on~$\pi$ with \textit{nine} decimal figures of accuracy
using only an inscribed $60$-gon.

In the short Section~\ref{sec:speculation}, we offer the conjecture,
based on the famous Archimedes' \emph{trisection figure}, that
Archimedes may have used (an equivalent of) the Snell--Cusa
convergence-improving inequalities to compute more accurate bounds
on~$\pi$.

There are two appendices. In Appendix~\ref{app:helpful}, we retrieve
the very Archimedean proof of a theorem of Huygens from an earlier
work~\cite{Huygens1651} that provides the essential step in locating
the barycenter of a circular segment. Indeed, in
subsection~\ref{ssc:barycenter-location} we prove that this discovery
of Huygens predates the modern formula for its location. In 
Appendix~\ref{app:area-comparison}, we determine an area inequality
inspired by the work of Schuh and Hofmann.

\begin{figure}[htb]
\centering
\begin{tikzpicture}[scale=0.8, rotate=-15]
\coordinate (O) at (0,0) ;
\coordinate (A) at (0:2cm) ;
\coordinate (B) at (70:2cm) ;
\coordinate (C) at (140:2cm) ;
\coordinate (D) at ($ (A)!(B)!(C) $) ;
\coordinate (P) at (-110:2cm) ;
\coordinate (R) at (-40:2cm) ;
\coordinate (M) at ($ (C)!(A)!(R) $) ;
\draw (O) circle(2cm) node[left] {$O$} ;
\draw[fill=gray!20] (A) arc(0:140:2cm) -- cycle ; 
\draw (A) node[below right] {$A$}
    -- (B) node[above right] {$B$}
    -- (C) node[above left] {$C$} ;
\draw (B) -- (D) node[right=1pt] {$D$} ;
\draw (A) -- (M) node[left] {$M$} ;
\draw (D) -- (P) node[below left] {$P$} ;
\draw (C) -- (R) node[below right] {$R$} ;
\draw[gray] (M) ++(50:2mm) -- ++(140:2mm) -- ++(230:2mm) ; 
\foreach \pt in {A,B,C,D,M,O,P,R} \draw (\pt) node{$\bull$} ;
\end{tikzpicture}
\caption{A circular segment less than a semicircle}
\label{fg:notation} 
\end{figure}

\paragraph{Some terminology}
In what follows, the term \textit{segment} will always refer to
the plane figure enclosed by an arc of a curve (either a circle or a
parabola) and the chord joining the endpoints of the arc. (Line
segments are never in question.)

A \textit{maximum triangle} in a circular segment is one whose base
and altitude are the same as those of the segment. Such a triangle is
isosceles. Its apex is the midpoint of the circular arc, called the
\textit{vertex} of the circular segment, i.e., the point on its arc
most distant from its chord.

The \textit{diameter} of a circular segment is the perpendicular
bisector of the base chord; as such, it is a part of a diameter of the
full circle. (The term is not used to denote the maximal distance
between two points of the segment.) The length of the diameter is 
called the \textit{height} of the segment.

The \textit{area} of a plane figure is denoted by parentheses; thus 
$(\tri ABC)$ and $(\segment ABC)$ are the areas of the triangle
$ABC$ and the segment $ABC$, respectively.

The \textit{sine} of a circular arc (less than a semicircle) is the
length of the perpendicular from one extremity of the arc to the
circle's diameter through the other extremity. (This is Huygens'
terminology; in modern language, that would be the sine of the central
angle times the radius of the circle.)

In Figure~\ref{fg:notation}, the shaded area is the circular segment
$ABC$, with vertex $B$, base or chord $AC$, and diameter $BD$. The two
diameters of the circle $BP$ and $CR$ meet at its center~$O$. The
perpendicular $AM$ from $A$ to the diameter $CR$ is the sine of the
arc~$\arc{AC}$.


\section{Two early bounds on the circumference} 
\label{sec:Cusa-and-Snell}

Huygens bases the first part of his tract \textit{Inventa} (up to and
including its Theorem~XIII) on two propositions giving upper and lower
bounds for the area of a \textit{circular segment }(and therefore,
sector), clearly inspired by Archimedes' corresponding proposition on
\textit{parabolic} segments.

\subsection{The first Heron--Huygens lemma} 
\label{ssc:Heron-Huygens-one}

In his essay \textit{The quadrature of the parabola}
\cite[p.~246]{Heath1912}, Archimedes famously proved the following
area relation.

\begin{thm} 
\label{th:parabola-quadrature}
Every segment bounded by a parabola and a chord is equal to
\textbf{four-thirds} of the triangle which has the same base and
height as the segment.
\end{thm}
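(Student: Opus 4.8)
The plan is to reduce the claim, by affine invariance, to a single trivially computable instance. Any parabola is the image of $y = x^2$ under an affine transformation $T$ of the plane, and such a $T$ multiplies every area by the constant $|\det T|$; consequently $T$ preserves the \emph{ratio} of the area of a segment to the area of any triangle inscribed in it. Because an affine map sends tangent lines to tangent lines and parallel lines to parallel lines, it also carries the vertex of a segment — the point whose tangent is parallel to the chord — to the vertex of the image segment, and hence the maximum triangle to the maximum triangle. So first I would invoke this reduction to replace an arbitrary parabolic segment by one convenient segment of $y = x^2$, for which I must only check the ratio $\tfrac{4}{3}$.

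Next I would carry out the computation. Take the chord joining $A = (-1,1)$ and $C = (1,1)$; its vertex is $B = (0,0)$, where the tangent $y = 0$ is parallel to the chord $y = 1$. The segment is the region between chord and arc, so
\[
(\segment ABC) = \int_{-1}^{1} (1 - x^2)\,dx
= \Bigl[\,x - \tfrac{x^3}{3}\,\Bigr]_{-1}^{1} = \tfrac{4}{3}.
\]
The maximum triangle $ABC$ has base $AC$ of length $2$ along the line $y = 1$ and height $1$ (the vertical drop to $B$), so $(\tri ABC) = \half\cdot 2\cdot 1 = 1$. Dividing yields the ratio $\tfrac{4}{3}$, as claimed, and by the reduction this ratio holds for every parabolic segment.

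Once affine invariance is granted the remaining calculation is entirely routine, so in this modern argument the one delicate point is the affine-reduction bookkeeping — in particular the observation that the \emph{vertex} of a segment is an affine-covariant notion, which is exactly what guarantees that ``same base and height'' survives under $T$. For a proof in Archimedes' own synthetic spirit, more in keeping with the rest of this paper, I would instead inscribe the maximum triangle, note from the defining ordinate relation of the parabola that each of the two residual sub-segments (on the chords $AB$ and $BC$) contains a maximum triangle of exactly one-eighth the area of $(\tri ABC)$, and iterate; since each refinement contributes one-quarter of the area added at the previous stage, the areas form the geometric series $(\tri ABC)\bigl(1 + \tfrac14 + \tfrac{1}{16} + \cdots\bigr) = \tfrac{4}{3}(\tri ABC)$. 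The hard part there is two-fold: establishing the one-eighth lemma from the geometry of the parabola, and making the exhaustion rigorous without a notion of limit (Archimedes used a double \emph{reductio ad absurdum}); this is where I would expect the main difficulty of a purely synthetic treatment to lie.
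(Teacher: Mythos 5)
Your proof is correct, but it takes a genuinely different route from the paper -- which in fact offers no proof of this theorem at all: the statement is quoted as Archimedes' result from the \emph{Quadrature of the Parabola}, cited to Heath, with only the remark that Archimedes' own proof contains the first explicit sum of an infinite geometric series (of ratio $\tfrac{1}{4}$). That classical argument is exactly the synthetic one you sketch in your closing paragraph: inscribe the maximum triangle, prove each residual sub-segment carries an inscribed triangle of one-eighth its area, and exhaust via $1 + \tfrac{1}{4} + \tfrac{1}{16} + \cdots = \tfrac{4}{3}$, with a double \emph{reductio} in place of a limit. The paper's real use of that scheme is as a template rather than as a proof: Lemmas~\ref{lm:Heron-Huygens-1} and~\ref{lm:Huygens-III} transplant it to \emph{circular} segments, where the exact factor $\tfrac{1}{8}$ degrades to the inequality $(\tri ABC) < 8(\tri EBF)$ and the exact ratio $\tfrac{4}{3}$ becomes the strict bound $(\segment ABC) > \tfrac{4}{3}(\tri ABC)$. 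Your primary argument -- affine normalization plus one integral -- is shorter and fully rigorous by modern standards, at the price of calculus and of being anachronistic relative to the synthetic program the paper reconstructs; its crux, which you correctly identify, is that the vertex (tangent parallel to the chord) and hence the maximum triangle are affine-covariant notions. One piece of bookkeeping should be made explicit, though: knowing that every parabola is an affine image of $y = x^2$ normalizes the \emph{parabola} but not the \emph{segment}; you also need that the affine self-maps of $y = x^2$, namely $(x,y) \mapsto (ax+b,\ a^2 y + 2abx + b^2)$, act transitively on its chords, so that an arbitrary segment can be carried onto your model one. That verification is routine, and you flagged the reduction bookkeeping yourself, so this is a presentational gap rather than a mathematical one.
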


Archimedes' proof exhibits the first explicit sum of an infinite
(geometric) series in the history of mathematics (of ratio
$\frac{1}{4}$).

It is of historical interest that Heron of Alexandria
\textit{anticipated Huygens by some 1600 years} in section~32 of his
treatise \textit{Metrika} \cite{Heron}. Thought to be irretrievably
lost for almost two thousand years, the manuscript \textit{Metrika}
was rediscovered in 1896, some 250 years after the writing of
\textit{Inventa}. Therefore, Huygens was quite unaware of Heron's work
when he did his own investigation. Nevertheless, the statement of
Huygens' Theorem~I (immediately below) coincides almost word for word
with Heron's. For this reason we call the two bounds the
\textit{Heron--Huygens lemmas}.

\begin{lema} 
\label{lm:Heron-Huygens-1}
If in a segment of a circle less than a semicircle a maximum triangle
be inscribed, and in the subtended segments triangles be similarly
inscribed, the triangle first drawn will be less (in area) than
\textbf{four times} the sum of the two which were drawn in the
subtended segments.
\end{lema}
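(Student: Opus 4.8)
The plan is to lean on Archimedes' parabola quadrature (Theorem~\ref{th:parabola-quadrature}) by comparing the circular segment with an \emph{inscribed} parabolic one, exactly in the spirit of Huygens' tract. Write $T := (\tri ABC)$ for the area of the maximum triangle and let $E$, $F$ be the midpoints of the arcs $\arc{AB}$ and $\arc{BC}$, so that $(\tri AEB)$ and $(\tri BFC)$ are the two subtended maximum triangles; symmetry about the diameter $BD$ makes these equal. The goal is therefore to show $T < 8\,(\tri AEB)$.

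First I would inscribe in the segment the parabola with vertex $B$ and axis along $BD$ that passes through $A$ and $C$; since the point of this parabola farthest from $AC$ is its vertex $B$, its maximum triangle on the chord $AC$ is again $\tri ABC$. Let $E'$ denote the apex of the parabolic maximum triangle on the chord $AB$. Applying the quadrature theorem to the three parabolic segments over $AC$, $AB$, $BC$, and using that the segment over $AC$ is the triangle $\tri ABC$ together with the segments over $AB$ and $BC$, I get $\tfrac{4}{3}T = T + 2\cdot\tfrac{4}{3}(\tri AE'B)$, hence $(\tri AE'B) = \tfrac{1}{8}T$ exactly. The lemma will follow once I prove the strict domination $(\tri AEB) > (\tri AE'B)$, i.e.\ that on the common chord $AB$ the \emph{circular} maximum triangle beats the parabolic one.

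This comparison is the crux. Placing $B$ at the origin with $BD$ vertical, the circle reads $y = -x^2/(2r) + O(x^4)$ near $B$, while the inscribed parabola reads $y = -x^2/\bigl(r(1+\cos\alpha)\bigr)$, where $2\alpha$ is the central angle of $\arc{AC}$. As $1+\cos\alpha < 2$, the parabola falls away from $B$ faster than the circle, so $\arc{AB}$ starts out \emph{above} the parabola. To upgrade this to a global statement I would write the difference of ordinates as a function of $u := x^2$: the circular term makes it concave in $u$ while the parabolic term is linear, and it vanishes at both endpoints $x=0$ and $x=r\sin\alpha$, so it stays positive in between and the arcs never recross. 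Since both arcs lie on the far side of the chord $AB$, lying higher at every abscissa means lying farther from the chord line; hence the greatest distance from $\arc{AB}$ to $AB$ (attained at $E$) exceeds the parabolic height, and with a shared base this gives $(\tri AEB) > \tfrac18 T$. Doubling yields $8\,(\tri AEB) > T$.

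The main obstacle is precisely this global non-recrossing of circle and inscribed parabola; the concavity-in-$u$ device is what promotes the easy local behaviour at $B$ into a bound holding across the whole chord. As a check, and as a fallback in case a fully synthetic non-crossing argument is wanted, writing both areas through $\alpha$ reduces the entire lemma to $\cos\tfrac{\alpha}{2}\,\bigl(1+\cos\tfrac{\alpha}{2}\bigr) < 2$ for $0 < \alpha < \tfrac{\pi}{2}$, which is immediate because $\cos\tfrac{\alpha}{2} < 1$; the approach to equality as $\alpha \downto 0$ also records that very small circular segments are nearly parabolic, so the factor $4$ cannot be lowered.
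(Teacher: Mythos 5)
Your proof is correct, but it takes a genuinely different route from the paper's. The paper stays purely synthetic: joining $EF$ to meet the diameter at $G$, it bounds the heights by $BD < 4\,BG$ (from $EB > \half AB$ together with the relations $AB^2 = BD \. BP$ and $EB^2 = BG \. BP$, both coming from similar triangles on the circle's diameter $BP$), bounds the bases by $AC < 2\,EF$ via the triangle inequality, and multiplies the two to get $(\tri ABC) < 8(\tri EBF)$. You instead import Theorem~\ref{th:parabola-quadrature}: inscribing the parabola with vertex $B$ through $A$ and $C$, Archimedes' decomposition gives the parabolic analogue \emph{exactly}, $(\tri AE'B) = \tfrac{1}{8}(\tri ABC)$, and the lemma then reduces to the strict domination $(\tri AEB) > (\tri AE'B)$, which you get from the circular arc over $AB$ lying strictly outside the inscribed parabola. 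The steps check out: your concave-in-$u$ difference vanishing at both endpoints does force the arcs apart on the whole interval; the chord $AB$ is non-vertical and both arcs lie on its upper side, so pointwise domination of ordinates gives domination of distances to $AB$, hence of the two heights on the common base $AB$; and your closing trigonometric reduction to $\cos\frac{\alpha}{2}\,\bigl(1+\cos\frac{\alpha}{2}\bigr) < 2$ is also right. Your route buys two things the paper's does not: it is precisely Huygens' guiding idea (approximate circular segments by parabolic ones) put to work one lemma early, and it exhibits the constant $4$ as sharp, since as $\alpha \downto 0$ the circular segment becomes parabolic and the inequality saturates. The costs are that it requires Archimedes' quadrature as input and trades synthetic geometry for coordinates and Taylor expansions. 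One streamlining worth noting: the non-crossing claim follows in one line by comparing half-widths at equal depth $x$ along $BD$ --- circle $y^2 = x(2r - x)$ against parabola $y^2 = x(2r - a)$, with $x < a$ --- which is exactly how the paper argues the analogous containment in Step~2 of Theorem~\ref{th:Huygens-XIV}; your concavity-in-$u$ device is correct but heavier than necessary.
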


\begin{figure}[htb]
\centering
\begin{tikzpicture}[scale=0.8, rotate=-30]
\coordinate (O) at (0,0) ;
\coordinate (A) at (0:2cm) ;
\coordinate (B) at (80:2cm) ;
\coordinate (C) at (160:2cm) ;
\coordinate (D) at ($ (A)!0.5!(C) $) ;
\coordinate (E) at (40:2cm) ;
\coordinate (F) at (120:2cm) ;
\coordinate (G) at (intersection of B--D and E--F) ;
\coordinate (P) at (-100:2cm) ;
\draw[dashed] (O) circle(2cm) ;
\draw (A) arc(0:160:2cm) -- cycle ;  
\draw (A) node[below right] {$A$}
    -- (B) node[above right] {$B$}
    -- (C) node[above left] {$C$} ;
\draw (A) -- (E) -- (B) -- (F) -- (C) ;
\draw (E) node[right] {$E$} -- (F) node[above] {$F$} ;
\draw[dashed] (B) -- (G) node[below=2pt] {$G$}
    -- (D) node[below=2pt] {$D$} -- (P) node[below left] {$P$}
    -- (A) ;
\foreach \pt in {A,B,C,E,F} \draw (\pt) node{$\bull$} ;
\foreach \pt in {D,G,P} \draw (\pt) node{$\sst\circ$} ;
\end{tikzpicture}
\caption{Maximum triangles in circle segments}
\label{fg:Heron-Huygens-1} 
\end{figure}

\begin{proof}
Given the segment $ABC$ of a circle, and $BD$ the diameter of the
segment, let there be inscribed a maximum triangle $\tri ABC$
(see Figure~\ref{fg:Heron-Huygens-1}). Likewise, in the two subtended
segments let there be inscribed maximum triangles $\tri AEB$ and
$\tri BFC$. It is required to prove that the \textit{area}
$(\tri ABC)$ is less than \textit{four times} the sum of
$(\tri AEB)$ and~$(\tri BFC)$.

Let $EF$ be joined, cutting the diameter of the segment at the
point~$G$. Then there are \textit{three congruent triangles}, namely
$\tri AEB$, $\tri BFC$, and the new triangle
$\tri EBF$: see Figure~\ref{fg:Heron-Huygens-1}. We shall prove
that
\begin{equation}
(\tri ABC) < 8(\tri EBF),
\label{eq:eighth} 
\end{equation}
which will establish the lemma, since
$8(\tri EBF) = 4(\tri AEB) + 4(\tri BFC)$.

\medskip

We claim that the heights of $\tri ABC$ and $\tri EBF$ are
related by:
\begin{equation}
BD < 4\,BG.
\label{eq:both-heights} 
\end{equation}
To see that, notice that the arc $\arc{AB}$ is bisected by the
point~$E$. Therefore
$$
EA \ (\text{or } EB) > \frac{1}{2} AB
\word{and thus}  AB^2 < 4\, EB^2 \text{ or } 4\, EA^2,
$$
which implies
$$
\frac{BD}{BG} = \frac{BD \. BP}{BG \. BP} = \frac{AB^2}{EB^2} < 4,
$$
if $BP$ is the diameter of the circle through the point $B$, since
$AB^2 = BD \. BP$ and $EB^2 = BG \. BP$. The first of these
equalities comes from the similar triangles
$\tri DBA \sim \tri ABP$, which entails $DB : AB = AB : BP$. The other
equality follows likewise from $\tri EBG \sim \tri PBE$.

Next, the bases of $\tri ABC$ and $\tri EBF$ obey:
\begin{equation}
AC < 2\,EF.
\label{eq:both-bases} 
\end{equation}
This follows from the triangle inequality:
$$
EF = AB = BC  \implies  2\,EF = AB + BC > AC.
$$
Together, \eqref{eq:both-heights} and \eqref{eq:both-bases} yield the
desired inequality \eqref{eq:eighth}.
\end{proof}

\begin{lema}[Heron--Huygens I: Theorem III of \emph{Inventa}] 
\label{lm:Huygens-III}
The area of a circular segment less than a semicircle has a
\textbf{greater ratio} to the area of its maximum inscribed triangle
\textbf{than four to three}.
\end{lema}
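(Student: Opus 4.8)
The plan is to run Archimedes' exhaustion argument for Theorem~\ref{th:parabola-quadrature}, but with the \emph{exact} quartering of the parabolic triangles replaced by the one-sided estimate just proved in Lemma~\ref{lm:Heron-Huygens-1}. Starting from the segment $ABC$, inscribe the maximum triangle $\tri ABC$; then inscribe maximum triangles in each of the two subtended segments; then in each of the four newly subtended segments; and so on. Write $T_0 := (\tri ABC)$ and, for $n \geq 1$, let $T_n$ be the total area of all the maximum triangles inscribed at the $n$-th stage, so that $T_1 = (\tri AEB) + (\tri BFC)$ in the notation of Figure~\ref{fg:Heron-Huygens-1}. Since these triangles are disjoint and together fill out the segment, the goal is to conclude
$$
(\segment ABC) = \sum_{n=0}^{\infty} T_n .
$$

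Two facts make this exhaustion legitimate. First, each subtended segment is again \emph{less than a semicircle} (its arc is a proper sub-arc of the previous one), so Lemma~\ref{lm:Heron-Huygens-1} remains applicable at every stage. Second, each maximum triangle occupies more than half of its own segment: the segment is contained in the parallelogram bounded by the chord, the tangent to the circle at the vertex (which is parallel to the chord), and the two lines through the chord's endpoints parallel to the diameter; this parallelogram has area $2(\tri)$, whence $(\tri) > \half (\segment)$. Hence the total residual segment area is more than halved at each stage and tends to $0$ by the Eudoxus--Archimedes principle, which is exactly what validates the displayed identity.

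Now apply Lemma~\ref{lm:Heron-Huygens-1} to \emph{each} segment present at stage~$n$: its maximum triangle has area less than four times the combined area of the two triangles inscribed in its subtended segments. Adding these inequalities over all stage-$n$ segments gives $T_n < 4\,T_{n+1}$, that is, $T_{n+1} > \quarter\,T_n$ for every $n \geq 0$. Iterating, $T_n > 4^{-n}\,T_0$ for all $n \geq 1$, and summing the resulting geometric series yields
$$
(\segment ABC) = \sum_{n=0}^{\infty} T_n > T_0 \sum_{n=0}^{\infty} 4^{-n} = \frac{4}{3}\,T_0 = \frac{4}{3}\,(\tri ABC),
$$
the inequality being strict because already $T_1 > \quarter\,T_0$ strictly. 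This is precisely the assertion that $(\segment ABC)$ bears a greater ratio than four to three to its maximum inscribed triangle.

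The routine part is the geometric summation; the part demanding care is the exhaustion step --- both the convergence of $\sum_n T_n$ to $(\segment ABC)$ and the verification that the strict per-stage inequality is inherited by the infinite sum. In a strictly synthetic presentation one would phrase this, as Archimedes did, through a \emph{reductio} resting on Eudoxus's principle rather than an explicit limit, and I expect that to be the main obstacle, whereas the algebra of the quartered series is immediate.
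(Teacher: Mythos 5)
Your proof is correct and follows essentially the same route as the paper's: recursive application of Lemma~\ref{lm:Heron-Huygens-1} at each stage of the triangle-filling process, followed by summation of the quartered geometric series. The only difference is that you carefully justify the exhaustion step (each maximum triangle exceeds half of its segment, so the residual area vanishes), which the paper passes over with the phrase ``and so fill out the segment''.
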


\begin{proof}
In the circular segments of Figure~\ref{fg:Heron-Huygens-1} whose
bases are the chords $AE, EB, BF, FC$, we again draw the maximum
isosceles triangles, and then apply the process again, etc., and so
fill out the segment. Applying Lemma~\ref{lm:Heron-Huygens-1}, we
obtain
\begin{equation}
\boxed{(\segment ABC) > \frac{4}{3}\,(\tri ABC)} 
\label{eq:HH-first} 
\end{equation}
from:
\begin{align*}
(\segment ABC) 
&= (\tri ABC) + 2(\tri AEB) + 4(\cdots) + \cdots
\\
&> (\tri ABC) 
\Bigl( 1 + \frac{1}{4} + \frac{1}{4^2} +\cdots \Bigr)
= \frac{4}{3}\,(\tri ABC).
\tag*{\qed} 
\end{align*}
\hideqed
\end{proof}

One can see that this is almost exactly the proof that Archimedes
gives for the area of a \textit{parabolic} segment. The only
difference is that equality is replaced by \textit{inequality} since
the parabolic segment is \textit{smaller} than the corresponding
circular segment. Moreover, Heron proves this theorem in almost
exactly the same way.


\subsection{Huygens' first arc length inequality} 
\label{ssc:Huygens-first-inequality}

Huygens' first inequality is a \textit{lower} bound for the
circumference of a circle. Here we see how Huygens proves inequalities
about circular and polygonal \textit{areas}, and then translates them
into inequalities about \textit{perimeters}.

\begin{thm}[First arc length inequality:
Theorem VII of \emph{Inventa}] 
\label{th:Huygens-VII}
If $C$ denotes the circumference of a circle, then
\begin{equation}
\boxed{C > C_{2n} + \frac{1}{3}(C_{2n} - C_n)}
\label{eq:Snell-first} 
\end{equation}
where $C_n$ denotes the perimeter of the inscribed regular polygon of
$n$~sides.
\end{thm}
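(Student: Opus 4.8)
The plan is to transfer the \emph{area} inequality of Lemma~\ref{lm:Huygens-III} into a statement about \emph{perimeters} by means of the classical bridge between the area of an inscribed polygon and the perimeter of the inscribed polygon with half as many sides. Throughout, let $O$ be the centre, let $r$ be the radius, and write $\sR_m$ for the regular $m$-gon inscribed in the circle, so that $C_m$ denotes its perimeter. Two area facts will power the translation: Archimedes' own result that the area of the disk equals $\thalf\,r\,C$; and the identity
\begin{equation}
(\sR_{2m}) = \thalf\, r\, C_m .
\label{eq:double-area}
\end{equation}
I would establish \eqref{eq:double-area} geometrically. If $XY$ is a side of $\sR_m$ and $Z$ is the midpoint of its arc (hence a vertex of $\sR_{2m}$), then the quadrilateral $OXZY$, made of two adjacent triangles of $\sR_{2m}$, has diagonals $OZ$ and $XY$ that are \emph{perpendicular}, since the diameter $OZ$ is the perpendicular bisector of the chord $XY$. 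Therefore $(\quadrangle OXZY) = \thalf\,OZ\.XY = \thalf\,r\,XY$, and summing over the $m$ sides of $\sR_m$ yields \eqref{eq:double-area}.

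The heart of the matter is to double \emph{not} from $n$ to $2n$, but from $2n$ to $4n$, so that the three quantities produced by \eqref{eq:double-area} come out to be exactly $C$, $C_{2n}$, and $C_n$. Applying the maximum-triangle construction of Lemma~\ref{lm:Heron-Huygens-1} to every side $XY$ of $\sR_{2n}$ introduces the arc-midpoints $Z$ as the new vertices of $\sR_{4n}$; the $2n$ maximum triangles $\tri XZY$ fill the region between the two polygons, while the $2n$ circular segments fill the region between $\sR_{2n}$ and the circle. Hence $\sum(\tri XZY) = (\sR_{4n}) - (\sR_{2n})$ and $\sum(\segment XZY) = (\text{disk}) - (\sR_{2n})$. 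Each such segment is less than a semicircle (its central angle is $\pi/n < \pi$), so Lemma~\ref{lm:Huygens-III} applies term by term, and summing gives
\begin{equation}
(\text{disk}) - (\sR_{2n}) = \sum(\segment XZY) > \tfrac{4}{3}\sum(\tri XZY) = \tfrac{4}{3}\bigl[(\sR_{4n}) - (\sR_{2n})\bigr].
\label{eq:summed}
\end{equation}

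Finally I would substitute the two area identities. Since $(\text{disk}) = \thalf r C$, while \eqref{eq:double-area} gives $(\sR_{4n}) = \thalf r C_{2n}$ and $(\sR_{2n}) = \thalf r C_n$, inequality \eqref{eq:summed} becomes $\thalf r C > \tfrac{4}{3}\.\thalf r C_{2n} - \third\.\thalf r C_n$; dividing through by $\thalf r$ yields $C > \tfrac{4}{3}C_{2n} - \third C_n = C_{2n} + \third(C_{2n} - C_n)$, which is exactly \eqref{eq:Snell-first}. The one genuinely inventive step is the area-to-length bridge \eqref{eq:double-area} combined with the decision to subdivide the $2n$-gon rather than the $n$-gon; after that, everything is bookkeeping. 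The point most deserving of care is checking that the maximum triangles of $\sR_{2n}$ assemble into precisely $\sR_{4n}$ without gaps or overlaps, which holds because each new apex $Z$ is an arc-midpoint and the chords $XZ$, $ZY$ are exactly the sides of $\sR_{4n}$.
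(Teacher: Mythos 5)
Your proof is correct, and it reaches \eqref{eq:Snell-first} by summing globally what the paper does locally, arc by arc. The paper applies Lemma~\ref{lm:Huygens-III} to a \emph{single} segment: writing $b$ for the chord, $c$ for the sine and $s$ for the arc, it computes the maximum triangle's area as $2(\tri OBC) - (\tri OAC) = (b-c)\,r/2$ --- a kite computation which is exactly the per-side case of your identity $(\sR_{2m}) = \thalf r\,C_m$ --- then adds the central triangle to pass to the sector of area $\thalf rs$, obtaining the per-arc inequality \eqref{eq:Snell-any-arc}, namely $s > b + \third(b-c)$, which it finally multiplies by $2n$. You instead sum the Heron--Huygens inequality over all $2n$ segments of $\sR_{2n}$ at once and convert the three resulting areas to perimeters in one stroke, using Archimedes' disk-area theorem $(\text{disk}) = \thalf rC$ where the paper uses the sector formula $(\sector OAC) = \thalf rs$; these two facts are on the same classical footing, so neither route is more elementary than the other, and your bookkeeping (including the check that the maximum triangles tile $\sR_{4n} \setminus \sR_{2n}$ and the segments tile the disk minus $\sR_{2n}$) is sound. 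What the paper's local organization buys, and yours does not, is the stand-alone inequality \eqref{eq:Snell-any-arc} for an \emph{arbitrary} arc rather than a full polygon subdivision: that local statement is reused later, in the proof of Cusa's bound (Theorem~\ref{th:Huygens-XIII}) and again in Theorem~\ref{th:Huygens-XVI}, so your global version would have to be re-localized to serve those subsequent purposes.
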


\begin{figure}[htb]
\centering
\begin{tikzpicture}[scale=0.8, rotate=60]
\coordinate (O) at (0,0) ;
\coordinate (A) at (60:3cm) ;
\coordinate (B) at (30:3cm) ;
\coordinate (C) at (0:3cm) ;
\coordinate (E) at (-60:3cm) ;
\coordinate (M) at ($ (O)!(A)!(C) $) ;
\coordinate (Ac) at ($ (A)!0.6!(C) $) ;
\coordinate (Am) at ($ (A)!0.55!(M) $) ;
\coordinate (Ba) at (36:3cm) ;
\coordinate (Oa) at ($ (O)!0.5!(A) $) ;
\draw (C) arc(0:60:3cm) ;           
\draw[dashed] (C) arc(0:-60:3cm) ;  
\draw (C) node[above right] {$C$}
    -- (A) node[above left] {$A$}
    -- (M) node[below right=-2pt] {$M$} ;
\draw[dashed] (M) -- (E) node[below right=-1pt] {$E$} ; 
\draw (C) -- (O) node[below left=-1pt] {$O$} -- (A) -- cycle ;
\draw (C) -- (B) node[above] {$B$} -- (A) ;
\draw (O) -- (B) ;
\draw[gray] (M) ++(0.2,0) -- ++(0,0.2) -- ++ (-0.2,0) ; 
\draw (Ac) node[below=-1pt] {$\sst b$} ;
\draw (Am) node[below left=-2pt] {$\sst c$} ;
\draw (Ba) node[above left] {$\sst s$} ;
\draw (Oa) node[below left=-1pt] {$\sst r$} ;
\foreach \pt in {A,C,B,E,M,O} \draw (\pt) node{$\bull$} ;
\end{tikzpicture}
\caption{Polygon perimeter comparisons}
\label{fg:Huygens-VII} 
\end{figure}

\begin{proof}
We first obtain an inequality for a \emph{sector} and its arc. In a
circle with center~$O$ and radius~$r$, let $ACE$ be a segment, with
diameter $CM$ and base $AE$. We set $b := AC$, $c := AM$ and let $s$
be the length of the circular arc $\arc{AC}$, whose midpoint is $B$.
See Figure~\ref{fg:Huygens-VII}. We shall show that
\begin{equation}
\arc{AC} = s > b + \frac{1}{3}(b - c)
= AC + \frac{1}{3}(AC - AM).
\label{eq:Snell-any-arc} 
\end{equation}

\goodbreak 

We now add the area $(\tri OAC) = \half cr$ to that of the
circular segment $ABC$ to form the circular \textit{sector} of area
$(\sector OAC) = \half rs$. Thus
$$
(\tri ABC) = 2(\tri OBC) - (\tri OAC) 
= (b - c) \. \frac{r}{2}\,.
$$
The first Heron--Huygens lemma (Lemma~\ref{lm:Huygens-III}), applied 
to the segment $ABC$, now implies 
$$
\frac{4}{3}(b - c) \. \frac{r}{2} + \frac{cr}{2}
< (\segment ABC) + (\tri OAC) = (\sector OAC) = \frac{rs}{2} \,,
$$
and therefore
\begin{equation}
b + \frac{1}{3}(b - c) = \frac{4b - c}{3} = c + \frac{4}{3}(b - c) < s
\label{eq:easy-lower-bound} 
\end{equation}
which establishes \eqref{eq:Snell-any-arc}.

Now consider the special case where $b = AC$ is a side of an inscribed
regular polygon of $2n$~sides; then $2c = 2\,AM = AE$ will be a side
of the inscribed regular polygon of $n$~sides. On multiplying
\eqref{eq:Snell-any-arc} by~$2n$, we arrive at Snell's
inequality~\eqref{eq:Snell-first}.
\end{proof}

\paragraph{Error Analysis}
We observe, as in a footnote in Huygens' \textit{Oeuvres}
\cite[p.~128]{Huygens-v12}, that if $p_n$ denotes the perimeter of an
inscribed regular $n$-gon in a circle of radius~$1$, the central angle
of the $n$-gon is $\frac{2\pi}{n}$ and we obtain the following Taylor
expansion for \textit{Huygens' first inequality}:
$$
p_n = 2n \sin\Bigl( \frac{\pi}{n} \Bigr)
\implies p_{2n} + \frac{1}{3}(p_{2n} - p_n)
= 2\pi - \frac{\pi^5}{240n^4} + \frac{\pi^7}{80640n^6} -\cdots
$$
which shows not only that the approximation is in \textit{defect}, but
also that the error does not exceed
$\dfrac{\pi^5}{240n^4} \doteq \dfrac{1.275\cdots}{n^4}\,$, which
clearly is quite small for large~$n$.


\subsection{Nikolaus of Cusa's lower bound} 
\label{ssc:Cusa-lower-bound}

\begin{thm}[Nikolaus of Cusa: Theorem XIII of \emph{Inventa}] 
\label{th:Huygens-XIII}
If to the diameter of a circle a radius is added in the same
direction, and a line is drawn from the end of the extended line
cutting the circle and meeting the tangent to the circle at the
opposite extremity of the diameter, this will intercept a part of the
tangent \textbf{less} than the adjacent intercepted \textbf{arc}.
\end{thm}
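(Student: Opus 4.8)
The plan is to extract the figure from Huygens' verbal statement, convert the assertion into the analytic Cusa inequality by a single similar-triangle computation, and then deduce that inequality from the first arc length inequality (Theorem~\ref{th:Huygens-VII}) already in hand. Concretely, let the circle have centre~$O$ and radius~$r$, with a diameter running from~$A$ to~$B$; extend it beyond~$B$ by one radius to the point~$P$, so $OP = 2r$ and $AP = 3r$. The line from~$P$ cuts the circle at the point~$C$ adjacent to~$A$ and meets the tangent at the opposite end~$A$ in the point~$T$. Writing $\beta := \angle AOC$ for the central angle of the intercepted arc, we have $\arc{AC} = r\beta$, the intercepted tangent piece is $AT$, and the claim to be established is $AT < \arc{AC}$.

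First I would compute $AT$. Drop the perpendicular $CN$ from~$C$ onto the diameter $AB$; then $CN = r\sin\beta$ is exactly the \emph{sine} of the arc $\arc{AC}$, while $NP = 2r + r\cos\beta = r(2 + \cos\beta)$. The tangent at~$A$ is perpendicular to $AB$, so the right triangles $\tri PNC$ and $\tri PAT$ share the angle at~$P$ and are similar, giving $AT = CN\.AP/NP$; since $AP = 3r$ this yields
\[
AT = \frac{3r\sin\beta}{2 + \cos\beta}.
\]
Thus the theorem is precisely the Cusa inequality $\dfrac{3\sin\beta}{2+\cos\beta} < \beta$, the left half of~\eqref{eq:Cusa-and-Snell}.

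To finish I would not attack this directly but invoke the first arc length inequality~\eqref{eq:Snell-any-arc}, which states that any arc exceeds its chord plus one third of the excess of that chord over the corresponding sine. Applied to $\arc{AC}$, whose chord is $b = AC = 2r\sin\frac{\beta}{2}$ and whose sine is $c = CN = r\sin\beta$, it gives the \emph{sharper} lower bound
\[
\arc{AC} > b + \third(b - c) = \frac{8r\sin\frac{\beta}{2} - r\sin\beta}{3}.
\]
It therefore suffices to prove the purely algebraic domination $AT \leq b + \third(b-c)$, i.e.
\[
\frac{3r\sin\beta}{2+\cos\beta} \leq \frac{8r\sin\frac{\beta}{2} - r\sin\beta}{3}.
\]
Putting $v := \cos\frac{\beta}{2}$ and using $\sin\beta = 2\sin\frac{\beta}{2}\,v$ and $2 + \cos\beta = 1 + 2v^2$, one cancels the positive factor $2r\sin\frac{\beta}{2}$, clears the (positive) denominators, and arrives at the equivalent polynomial inequality
\[
(1 - v)^2(2 - v) \geq 0,
\]
which is manifest for $v = \cos\frac{\beta}{2} \in (0,1)$. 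Chaining the last three displays gives $AT \leq b + \third(b-c) < \arc{AC}$, as required.

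The main obstacle is exactly this closing comparison: it is not a priori clear that the tangent segment $AT$ undershoots even the (already sub-arc) estimate of Theorem~\ref{th:Huygens-VII}, and the content of the step is that their difference is a perfect square times a positive factor. Everything preceding it---identifying the configuration and the one-line similar-triangle evaluation of $AT$---is routine. As a cross-check one may dispense with Theorem~\ref{th:Huygens-VII} altogether and verify Cusa's inequality directly: the function $f(\beta) = \beta(2 + \cos\beta) - 3\sin\beta$ has $f(0) = f'(0) = f''(0) = 0$ and $f'''(\beta) = \beta\sin\beta > 0$ on $(0,\pi)$, so $f > 0$ there.
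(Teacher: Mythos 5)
Your proof is correct, and its logical skeleton coincides with the paper's: both arguments reduce the theorem to the domination
\[
\text{tangent intercept} \;\leq\; \text{chord} + \tfrac{1}{3}\,(\text{chord} - \text{sine}),
\]
after which the first arc-length inequality \eqref{eq:Snell-any-arc} from Theorem~\ref{th:Huygens-VII} finishes the job. The genuine difference is in how that domination is established. The paper, following Huygens, proves it synthetically: auxiliary points $H$, $K$, $D$, $G$ are constructed so that the tangent intercept splits as $BL = BE + KL$ with $BE$ equal to the chord; an arithmetic--geometric mean comparison yields $CG > 3\,CH$, and similar triangles convert this into $KL < \frac{1}{3}(BE - EG)$. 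You instead evaluate the tangent intercept in closed form by a single similarity, $AT = \frac{3r\sin\beta}{2+\cos\beta}$ -- which is precisely the computation the paper defers to its ``Error Analysis'' remark \emph{after} the proof -- and then verify the domination trigonometrically; your factorization checks out, since with $v = \cos\frac{\beta}{2}$ the claim $\frac{3v}{1+2v^2} \leq \frac{4-v}{3}$ is equivalent to $(v-1)^2(v-2) \leq 0$. What your route buys: it is shorter, it exhibits the gap between the tangent segment and Huygens' chord-plus-one-third bound as an explicit perfect square times a positive factor, and it sidesteps the configuration care the paper must take in a footnote (that $E$ is the \emph{second} intersection, so that $K$ lies between $B$ and $L$). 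What it costs is the Euclidean character: Huygens' argument uses no trigonometric parametrization, and presenting that synthetic proof is the paper's stated purpose. Your closing cross-check is also sound -- $f(\beta) = \beta(2+\cos\beta) - 3\sin\beta$ satisfies $f(0)=f'(0)=f''(0)=0$ and $f'''(\beta)=\beta\sin\beta>0$ on $(0,\pi)$ -- and it shows Theorem~\ref{th:Huygens-VII} can be bypassed entirely, at the price of proving the analytic statement \eqref{eq:modern-Cusa-inequality} directly rather than deriving it from the geometry.
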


\begin{figure}[htb]
\centering
\begin{tikzpicture}
\coordinate (O) at (0,0) ;
\coordinate (A) at (-2,0) ;
\coordinate (B) at (2,0) ;
\coordinate (C) at (-4,0) ;
\coordinate (Ch) at (-5.5,0) ;
\coordinate (H) at (-5,0) ;
\coordinate (Ld) at (2,-1.8) ; \coordinate (Lu) at (2,3.5) ;
\draw[name path=bigOh] (O) circle(2cm) ;
\path[name path=bigArc] (A) circle(3cm) ;
\path[name path=Ab] (A) -- (B) ; 
\path[name intersections={of=bigArc and bigOh, by={E,E2}}] ;
\coordinate (L) at (intersection of C--E and B--Lu) ;
\coordinate (D) at ($ (B)!(E)!(L) $) ;
\coordinate (G) at ($ (A)!(E)!(B) $) ;
\coordinate (K) at (intersection of E--H and Ld--Lu) ;
\draw (C) node[below] {$C$} -- (A)  node[below left] {$A$}
    -- (B) node[right] {$B$} -- (L) node[above right] {$L$} -- cycle ;
\draw (Ld) -- (Lu)   (Ch) -- (C) ;
\draw[dashed] (D) node[right] {$D$} -- (E) node[above left] {$E$}
    -- (G) node[below] {$G$} ;
\draw (H) node[below right] {$H$} -- (E) -- (K) node[right] {$K$} ;
\draw (O) node[below left] {$O$} -- (E) ;
\draw[dashed] (A) -- (E) -- (B) ;
\draw[gray] (B) ++(-0.2,0) -- ++(0,-0.2) -- ++ (0.2,0) ; 
\draw[gray] (D) ++(0,-0.2) -- ++(-0.2,0) -- ++ (0,0.2) ; %
\draw[gray] (G) ++(0.2,0) -- ++(0,0.2) -- ++ (-0.2,0) ;  
\foreach \pt in {A,B,C,D,E,G,H,K,L,O} \draw (\pt) node{$\bull$} ;
\end{tikzpicture}
\caption{Tangent and arc length comparison}
\label{fg:Huygens-XIII} 
\end{figure}

\begin{proof}
Given a circle with diameter $AB$, let $AB$ be produced to the point
$C$ such that $AC$ is equal to the radius. Let $CL$ be drawn cutting
the circumference the second time in $E$ and meeting at~$L$ the
tangent to the circle at the extremity $B$ of the diameter. (See
Figure~\ref{fg:Huygens-XIII}.) One must prove that
\begin{equation}
\boxed{BL < \arc{EB}}\,.
\label{eq:Snell-second} 
\end{equation}

Draw $AE$ and $EB$, and locate $H$ on the prolongation of the diameter
$AB$ beyond $A$ so that $AH = AE$. Produce $HE$ to meet the tangent
line $BL$ at~$K$. Finally, draw perpendiculars $EG$ from $E$ to the
diameter $AB$ and $ED$ from~$E$ to the tangent $BL$.

Now $\angle EHA = \angle HEA$ since $\tri HAE$ is isosceles; and since
$\angle AEB = \frac{\pi}{2}$, it follows that
$$
\angle HEA + \angle KEB = \frac{\pi}{2} \,.
$$
Now, $\angle HBK = \frac{\pi}{2}$; hence, in the triangle $\tri HKB$,
$$
\angle KHB + \angle BKH = \frac{\pi}{2} \,.
$$
Subtracting equals from equals, $\angle HEA$ on one side and
$\angle KHB = \angle EHA$ on the other, we conclude that
$\angle KEB = \angle BKH = \angle BKE$. Therefore, the triangle
$\tri KEB$ is isosceles, with $BE = BK$.

Moreover, $BD = EG$ as sides of the rectangle $BDEG$, which implies
$$
DK = BE - EG.
$$
Next,
$$
\frac{AG}{AE} = \frac{AE}{AB}  \implies  \frac{AB + AG}{2} > AE,
$$
i.e., the arithmetic mean of $AB$ and $AG$ is greater than their
geometric mean; it follows that
$$
AH = AE < \frac{1}{2}(AB + AG) = CA + \frac{1}{2} AG,
$$
and subtracting $CA$ from both sides%
\footnote{%
This subtraction is valid because $AH > CH$; or equivalently, $AE$ is 
longer than the radius of the circle. That happens because the point 
$E$ is chosen as the \textit{second} intersection of the line from 
$C$ to~$L$ with the circumference of the circle. Another consequence
is that $C$ lies between $B$ and~$H$, so that $K$ lies between $B$ 
and~$L$, as shown in Figure~\ref{fg:Huygens-XIII}.}
yields $CH < \half AG$.

But $CA = \half AB > \half AG$, and on adding $CA$ to $AG$, one 
deduces that
\begin{equation}
CG > \frac{3}{2} AG > 3 CH.
\label{eq:crucial-step} 
\end{equation}

But, since
$$
\frac{GH}{EG} = \frac{DE}{DK}   \word{and}
\frac{EG}{CG} = \frac{DL}{DE}
$$
by similar triangles $\tri EHG \sim \tri KED$ and
$\tri ECG \sim \tri LED$, then, by multiplying these two ratios, one
deduces
$$
\frac{GH}{CG} = \frac{DL}{DK} \word{and consequently}
\frac{CH}{CG} = \frac{KL}{DK}\,, \word{whereby} 
\frac{CG}{CH} = \frac{DK}{KL} \,.
$$
Thus, on account of~\eqref{eq:crucial-step},
$$
BE - EG = BK - BD = DK > 3 KL, \word{and thus}
KL < \frac{1}{3}(BE - EG).
$$
Then, invoking the inequality \eqref{eq:Snell-any-arc} from the proof
of Theorem~\ref{th:Huygens-VII}, i.e., Theorem~VII of \emph{Inventa},
we obtain the desired inequality~\eqref{eq:Snell-second}:
$$
BL = BK + KL = BE + KL < BE + \frac{1}{3}(BE - EG) < \arc{BE}.
\eqno \qed 
$$
\hideqed
\end{proof}

\paragraph{Error Analysis}
If we take the radius $OA = 1$, and put $x := \arc{BE}$, then by
similar triangles $\triangle{LBC} \sim \triangle{EGC}$, we get
$$
\frac{BL}{3} = \frac{BL}{BC} = \frac{EG}{CG} = \frac{EG}{2 + OG}\,;
\word{in other words,} \frac{BL}{3} = \frac{\sin x}{2 + \cos x}\,.
$$
Now, we just proved that $x > BL$; hence,
\begin{equation}
\boxed{x > \frac{3\sin x}{2 + \cos x}}\,.
\label{eq:modern-Cusa-inequality} 
\end{equation}
The formula \eqref{eq:modern-Cusa-inequality} \textit{is the modern
statement of Cusa's inequality}. Its accuracy is given by
$$
\frac{3\sin x}{2 + \cos x} 
= x - \frac{x^5}{180} - \frac{x^7}{1512} -\cdots
$$
which shows Cusa's approximation
$$
x \doteq \frac{3\sin x}{2 + \cos x}
$$
to be in \textit{defect}; and if we put $x := \frac{\pi}{2n}$, we
conclude that
$$
2\pi = 4n \. BL + \frac{\pi^5}{1440 n^4} +\cdots
$$
with an error $\dfrac{\pi^5}{1440n^4} = \dfrac{0.2125\cdots}{n^4}$
which is quite small for large~$n$.


\subsection{The second Heron--Huygens lemma} 
\label{ssc:Heron-Huygens-two}

We now seek to prove an \textit{upper} bound for the circumference.

\begin{lema} 
\label{lm:Heron-Huygens-2}
If a triangle is drawn having the same base as a segment of circle
less than a semicircle and having its sides \textbf{tangent} to the
segment, and if a line is drawn tangent to the segment at its vertex,
this cuts off from the given triangle a triangle greater than
\textbf{one half} of the maximum triangle described within the
segment.
\end{lema}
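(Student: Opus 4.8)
The plan is to make the circumscribed triangle explicit and reduce the claim to a one‑line inequality. Keep the segment $ABC$ of Figure~\ref{fg:notation}, with vertex $B$, base $AC$, diameter $BD$, center~$O$ and radius~$r$; let $2\alpha$ be the central angle subtending the chord~$AC$, so that $0<\alpha<\frac{\pi}{2}$ since the segment is less than a semicircle. The two tangents to the circle at $A$ and at~$C$ are the prescribed sides, and by symmetry they meet at a point $T$ lying on the line $BD$ produced beyond~$B$. The tangent to the segment at its vertex~$B$ is perpendicular to the diameter $BD$, hence parallel to~$AC$; let it meet $TA$ at~$P$ and $TC$ at~$Q$. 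Then $\tri TPQ$ is the triangle cut off from $\tri TAC$, and (after checking that $P$ and $Q$ fall on the sides rather than their extensions) the assertion to be proved is simply
\[
(\tri TPQ) > \half\,(\tri ABC).
\]

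First I would record why the two triangles are comparable. Since $PQ \parallel AC$, the triangles $\tri TPQ$ and $\tri TAC$ are similar, with ratio equal to that of their altitudes from~$T$; as $T$, $B$, $D$ are collinear on the axis, those altitudes are $TB$ and $TD = TB + BD$. Hence $(\tri TPQ) = (TB/TD)^2\,(\tri TAC)$, while $(\tri TAC) = (TD/BD)\,(\tri ABC)$ because $\tri TAC$ and $\tri ABC$ share the base~$AC$. Multiplying these,
\[
(\tri TPQ) = \frac{TB^2}{TD \. BD}\,(\tri ABC).
\]
The three lengths are read off the right triangle $OAT$ (right-angled at~$A$, since $TA \perp OA$), in which $\angle AOT = \alpha$ and $OA = r$ give $OT = r\sec\alpha$. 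Together with $OD = r\cos\alpha$ this yields $BD = r(1-\cos\alpha)$, $TB = r(\sec\alpha - 1)$ and $TD = r(\sec\alpha - \cos\alpha)$, whence a short simplification produces the exact value
\[
(\tri TPQ) = \frac{(\tri ABC)}{\cos\alpha\,(1 + \cos\alpha)}.
\]

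The conclusion is then immediate: since $0 < \cos\alpha < 1$ we have $\cos\alpha(1+\cos\alpha) < 2$, so the fraction above exceeds $\half\,(\tri ABC)$, as required. This exact ratio also exhibits the sharpness of the bound, because $\cos\alpha(1+\cos\alpha)\to 2$ as $\alpha\to 0$, which is the expected parabolic limit. I expect no genuine difficulty in the estimate itself; the only care needed is in the geometric set-up — locating $T$ on the axis, confirming $PQ\parallel AC$, and verifying that $P$ and $Q$ land on the sides $TA$, $TC$ so that an honest triangle is cut off — after which the similarity bookkeeping and the trivial inequality finish the proof.
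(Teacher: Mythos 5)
Your proof is correct, but it takes a genuinely different route from the paper's. You make the figure explicit in trigonometric terms and compute the \emph{exact} ratio: with $T$ (the paper's $E$) the apex of the circumscribed triangle and $P$, $Q$ (the paper's $F$, $G$) the intercepts of the vertex tangent, you get
$(\tri TPQ) = \dfrac{TB^2}{TD \. BD}\,(\tri ABC) = \dfrac{(\tri ABC)}{\cos\alpha\,(1+\cos\alpha)}$,
and the lemma follows from $\cos\alpha\,(1+\cos\alpha) < 2$; I checked the algebra and it is right. The paper, following Huygens, never introduces an angle or a length formula: it observes that the configuration consists of isosceles triangles with $B$ bisecting $FG$, uses the triangle inequality to get $EF > \half AE$, transfers this through the parallel $FB \parallel AD$ into $BE > \half DE$ and $BD < \half DE$, and combines the two resulting area estimates $(\tri FEG) > \quarter (\tri AEC)$ and $(\tri ABC) < \half (\tri AEC)$. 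Your computation is shorter once the set-up is in place and buys strictly more information: the exact constant $1/(\cos\alpha(1+\cos\alpha))$ shows that the bound $\half$ is sharp in the limit $\alpha \to 0$ (the parabolic limit), something the synthetic argument does not reveal. What the paper's argument buys is fidelity to Huygens' Euclidean format --- proportions and the triangle inequality only, no trigonometric functions --- which is the expository point of the article, and it is the style that feeds directly into the exhaustion argument of Theorem~\ref{th:Heron-Huygens-2}. Your flagged verifications (that $T$ lies on the axis beyond $B$, that $PQ \parallel AC$, and that $P$, $Q$ land on the segments $TA$, $TC$) are all immediate consequences of the segment being less than a semicircle, so there is no gap.
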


\begin{figure}[htb]
\centering
\begin{tikzpicture}
\coordinate (A) at (0:2cm) ;
\coordinate (B) at (60:2cm) ;
\coordinate (C) at (120:2cm) ;
\draw (A) ++ (0,1) coordinate (Ae) ;
\draw (B) ++ (-30:1cm) coordinate (Bf) ;
\draw (C) ++ (30:1cm) coordinate (Ce) ;
\coordinate (D) at ($ (A)!0.5!(C) $) ;
\coordinate (E) at (intersection of A--Ae and C--Ce) ;
\coordinate (F) at (intersection of A--E and B--Bf) ;
\coordinate (G) at (intersection of C--E and B--Bf) ;
\coordinate (J) at (30:2cm) ;
\draw (J) ++ (-60:1cm) coordinate (Jh) ;
\coordinate (H) at (intersection of A--E and J--Jh) ;
\coordinate (K) at (intersection of F--G and J--Jh) ;
\draw (A) arc(0:120:2cm) -- cycle ;  
\draw (A) node[right] {$A$}
    -- (B) node[above right] {$B$}
    -- (C) node[above left] {$C$} ;
\draw (A) -- (E) node[right] {$E$} -- (C) ;
\draw (F) node[right] {$F$} -- (G) node[above left] {$G$} ;
\draw (H) node[right] {$H$} -- (K) node[above right=-2pt] {$K$} ;
\draw[dashed] (E) -- (D) node[below left] {$D$} ;
\draw[dashed] (A) -- (J) node[below left=-2pt] {$J$} -- (B) ;
\draw[gray] (D) ++(-30:2mm) -- ++(60:2mm) -- ++ (150:2mm) ;
\foreach \pt in {A,B,C,D,E,F,G} \draw (\pt) node{$\bull$} ;
\foreach \mk in {H,J,K} \draw (\mk) node{$\sst\circ$} ;
\end{tikzpicture}
\caption{Outer and inner triangles of a circle segment}
\label{fg:Heron-Huygens-2} 
\end{figure}

\begin{proof}
Take a circular segment $ABC$ less than a semicircle with its vertex
at~$B$, and let the lines $AE$ and $CE$, tangents to the segment at
the extremities of its base, meet at~$E$. (They will indeed meet,
since the segment is less than a semicircle.) Moreover, let the line
$FG$, with $F$ on~$EA$ and $G$ on~$EC$, be drawn tangent to the
segment at its vertex~$B$; join $AB$ and $BC$. (See
Figure~\ref{fg:Heron-Huygens-2}.) We must prove:
\begin{equation*}
\boxed{(\tri FEG) > \frac{1}{2} (\tri ABC)}\,.
\end{equation*}
Clearly $\tri AEC$, $\tri FEG$, $\tri AFB$ and $\tri BGC$ are all
isosceles; and $B$ bisects $FG$. Therefore,
\begin{align*}
EF + EG > FG  &\implies  EF > CG \mot{or} AF
\\
&\implies  AE = AF + EF < 2EF.
\end{align*}

We claim that
\begin{equation}
(\tri FEG) > \frac{1}{4} (\tri AEC).
\label{eq:HH-step-3} 
\end{equation}
To see that, draw the straight line $EB$ cutting $AC$ perpendicularly
at~$D$ (Figure~\ref{fg:Heron-Huygens-2}). Then
$$
EF > \frac{1}{2} AE  \implies  BE > \frac{1}{2} DE
$$
since these lines are in the same proportion, because
$FB \parallel AD$. Also, $\tri FEG \sim \tri AEC$, so
$$
\frac{1}{2} EF \. BE > \frac{1}{8} AE \. DE
\implies \frac{1}{2} FG \cdot BE > \frac{1}{8} AC \cdot DE
$$
which confirms \eqref{eq:HH-step-3}.

\goodbreak 

Moreover,
$$
\frac{AF}{AE} = \frac{\text{altitude of $\tri ABC$}}
{\text{altitude of $\tri AEC$}} = \frac{BD}{DE}
$$
and the two triangles have the same base $AC$. Hence,
$$
AF < \frac{1}{2} AE  \implies  BD < \frac{1}{2} DE
\implies  (\tri ABC) < \frac{1}{2}(\tri AEC).
$$
This last inequality, combined with \eqref{eq:HH-step-3}, yields the
desired result:
$$
(\tri FEG) > \frac{1}{2} (\tri ABC).
\eqno \qed 
$$
\hideqed
\end{proof}

\begin{thm}[Heron--Huygens II: Theorem  IV of \emph{Inventa}] 
\label{th:Heron-Huygens-2}
The area of a circular segment less than a semicircle is \textbf{less}
than \textbf{two thirds} of the area of a triangle having its base in
common with this segment and its sides \textbf{tangent} to it.
\end{thm}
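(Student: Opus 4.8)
The plan is to adapt the infinite-series argument used for the lower bound in Lemma~\ref{lm:Huygens-III} to the \emph{circumscribing} (tangent) triangles, feeding in the second Heron--Huygens lemma in place of the first. Writing the assertion as $(\tri AEC) > \tfrac{3}{2}(\segment ABC)$, I expect to produce it by decomposing the tangent triangle $\tri AEC$ level by level and comparing, at every stage, each apex triangle with the maximum triangle inscribed beneath it.

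First I would record the basic area decomposition of the tangent triangle, which is the real geometric content. Since the vertex $B$ is the point of the arc most distant from the chord, the tangent $FG$ at $B$ is parallel to the base $AC$; it cuts $\tri AEC$ into the apex triangle $\tri FEG$ and the trapezoid $AFGC$. Because $B$ bisects $FG$ (as already noted in Lemma~\ref{lm:Heron-Huygens-2}), joining $AB$ and $BC$ splits this trapezoid into $\tri AFB$, the maximum inscribed triangle $\tri ABC$, and $\tri BGC$. Moreover $\tri AFB$ and $\tri BGC$ are themselves the tangent triangles of the two subtended segments on the chords $AB$ and $BC$: the line $AE$ is tangent at $A$, the line $FG$ is tangent at $B$, and these meet at $F$ (and symmetrically at $G$). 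Hence
$$
(\tri AEC) = (\tri FEG) + (\tri ABC) + (\tri AFB) + (\tri BGC),
$$
where the last two summands are circumscribing triangles of strictly smaller segments, each again less than a semicircle.

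Next I would invoke Lemma~\ref{lm:Heron-Huygens-2}, which gives $(\tri FEG) > \tfrac12(\tri ABC)$, so the displayed identity yields $(\tri AEC) > \tfrac{3}{2}(\tri ABC) + (\tri AFB) + (\tri BGC)$. Applying the same estimate to $\tri AFB$ and $\tri BGC$, and iterating, the circumscribing triangles at stage $k$ contribute more than $\tfrac32$ times the combined area $I_k$ of all maximum triangles inscribed at that stage, while a fresh pair of smaller tangent triangles is passed down to stage $k+1$. Since the tangent-triangle remainder is always nonnegative, after $N$ steps
$$
(\tri AEC) > \tfrac{3}{2}\bigl( I_0 + I_1 + \cdots + I_{N-1} \bigr).
$$
Letting $N \to \infty$ and using that these inscribed triangles exhaust the segment, exactly as in the proof of Lemma~\ref{lm:Huygens-III}, gives $I_0 + I_1 + \cdots = (\segment ABC)$ and therefore $(\tri AEC) > \tfrac{3}{2}(\segment ABC)$, which is the theorem. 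Strictness is already secured at the first step, where $(\tri FEG) > \tfrac12(\tri ABC)$ strictly and $(\tri ABC) > 0$.

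The one step that needs care is the decomposition identity, and in particular the claim that the two flanking triangles $\tri AFB$ and $\tri BGC$ are genuinely the tangent triangles of the subtended segments; this rests on the tangency of $AE$, $FG$ at $A$, $B$ and on $B$ being the midpoint of $FG$. Everything after that is the Archimedean exhaustion already deployed for the lower bound, so I expect no new difficulty there: the convergence is handled verbatim as in Lemma~\ref{lm:Huygens-III}, and the result is the exact mirror image of the parabolic case, where the apex triangle equals \emph{half} the inscribed triangle and the segment is \emph{exactly} $\tfrac23$ of the tangent triangle.
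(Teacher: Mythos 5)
Your proposal is correct and takes essentially the same route as the paper: the paper's proof likewise iterates Lemma~\ref{lm:Heron-Huygens-2} down the tree of sub-segments (``$(\tri FEG) > \frac{1}{2}(\tri ABC)$, $(\tri HFK) > \frac{1}{2}(\tri AJB)$, and so on'') and sums, using the exhaustion of the segment by inscribed maximum triangles exactly as in Lemma~\ref{lm:Huygens-III}. Your version merely organizes the bookkeeping more explicitly, carrying the undecomposed tangent triangles as a nonnegative remainder at each finite stage instead of summing the cut-off apex triangles over all levels at once, which is a slightly more careful rendering of the paper's ``adding all these inequalities'' step.
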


\begin{proof}
Referring again to Figure~\ref{fg:Heron-Huygens-2}, we obtain:
$$
(\tri FEG) > \frac{1}{2} (\tri ABC), \quad
(\tri HFK) > \frac{1}{2} (\tri AJB), \quad\text{and so on}.
$$
That is, we repeat the process with smaller and smaller triangles; and
we obtain an infinite sequence of inequalities in which the area on
the left-hand side is greater that one-half of the area on the
right-hand side.

Adding all these inequalities, we obtain
$$
(\tri AEC) - (\segment ABC) > \frac{1}{2} (\segment ABC).
$$
In other words,
$$
(\segment ABC) < \frac{2}{3} (\tri AEC),
$$
which establishes the theorem.
\end{proof}


\subsection{The second Snell theorem} 
\label{ssc:Snell-two}

\begin{lema}[Theorem VI of \emph{Inventa}] 
\label{lm:area-estimate}
If $A_n$ is the area of an \emph{inscribed} regular polygon of
$n$~sides, $S$ the area of the circle, and $A'_n$ the area of a
\emph{circumscribed} regular polygon of $n$~sides, then
\begin{equation}
\boxed{S < \frac{2}{3}\, A'_n + \frac{1}{3}\, A_n} \,.
\label{eq:area-estimate} 
\end{equation}
\end{lema}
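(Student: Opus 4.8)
The plan is to realize both the circle and the circumscribed polygon as the \emph{same} inscribed polygon augmented by $n$ congruent pieces, and then to compare those pieces one at a time by means of the second Heron--Huygens inequality (Theorem~\ref{th:Heron-Huygens-2}). Since the areas $A_n$, $A'_n$ and $S$ do not depend on the rotational position of the two polygons, I would first fix the most convenient alignment: take the circumscribed regular $n$-gon to be tangent to the circle \emph{at the very vertices} of the inscribed regular $n$-gon. With this choice each side of the circumscribed polygon lies along the tangent line at one inscribed vertex, and the two tangent lines at adjacent inscribed vertices $A_i$ and $A_{i+1}$ meet at a circumscribed vertex, call it $E_i$.

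Next I would record the two decompositions. On one hand, the circle is the inscribed polygon together with the $n$ circular segments cut off by its sides, so $S = A_n + n\sigma$, where $\sigma$ is the area of a single segment standing on one polygon side. On the other hand, the region between the two polygons is tiled by the $n$ congruent triangles $\tri A_iE_iA_{i+1}$: each has base $A_iA_{i+1}$ (a side of the inscribed polygon), while its other two sides $A_iE_i$ and $E_iA_{i+1}$ lie along the circumscribed boundary (the tangents at $A_i$ and at $A_{i+1}$). Hence $A'_n = A_n + n\tau$, where $\tau$ denotes the area of one such tangent triangle; equivalently $n\tau = A'_n - A_n$.

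The crux is that $\tau$ is exactly the area of the triangle figuring in Theorem~\ref{th:Heron-Huygens-2}: its base is the chord $A_iA_{i+1}$ of the segment, and its two remaining sides are tangent to the arc at the endpoints $A_i$ and $A_{i+1}$ of that base. Because one side of a regular $n$-gon subtends a central angle $2\pi/n < \pi$ for every $n \geq 3$, each segment is less than a semicircle, so Theorem~\ref{th:Heron-Huygens-2} is applicable and yields $\sigma < \twothirds\,\tau$ for a single segment and its tangent triangle. Finally I would assemble the estimate by pure algebra:
$$
S = A_n + n\sigma < A_n + \twothirds\, n\tau
= A_n + \twothirds\,(A'_n - A_n) = \twothirds\, A'_n + \third\, A_n,
$$
which is precisely \eqref{eq:area-estimate}.

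The only genuine obstacle is the geometric bookkeeping of the second paragraph: one must check that the chosen alignment makes the tangent triangles of Theorem~\ref{th:Heron-Huygens-2} fit together exactly, with disjoint interiors, to fill the annular region between the inscribed and circumscribed polygons, so that their combined area is indeed $A'_n - A_n$. Once that identification is secured, the segment-by-segment application of the second Heron--Huygens theorem and the concluding summation are entirely routine.
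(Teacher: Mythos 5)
Your proposal is correct and follows essentially the same route as the paper: the same alignment (circumscribed $n$-gon tangent at the inscribed vertices), the same key lemma (Theorem~\ref{th:Heron-Huygens-2}) applied to each circular segment and its tangent triangle $\tri A_iE_iA_{i+1}$, and the same summation over the $n$ congruent pieces. The only cosmetic difference is bookkeeping: the paper adds the triangle $\tri OBC$ to both sides first, obtaining the per-sector inequality $(\sector OBC) < \frac{2}{3}(\quadrangle OBFC) + \frac{1}{3}(\tri OBC)$ and then sums over sectors, whereas you sum the segment inequality $\sigma < \frac{2}{3}\tau$ directly via the global decompositions $S = A_n + n\sigma$ and $A'_n = A_n + n\tau$ --- an algebraically equivalent rearrangement.
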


\begin{figure}[htb]
\centering
\begin{tikzpicture}
\coordinate (O) at (0,0) ;
\coordinate (B) at (0:2cm) ; \coordinate (C) at (60:2cm) ;
\coordinate (B1) at (120:2cm) ; \coordinate (C1) at (180:2cm) ;
\coordinate (B2) at (-120:2cm) ; \coordinate (C2) at (-60:2cm) ;
\coordinate (D) at (30:2cm) ;
\coordinate (E) at (-30:2.309cm) ; \coordinate (F) at (30:2.309cm) ;
\coordinate (G) at (90:2.309cm) ; \coordinate (E1) at (150:2.309cm) ;
\coordinate (F1) at (-150:2.309cm) ;
\coordinate (G1) at (-90:2.309cm) ;
\draw (O) node[above left] {$O$} circle(2cm) ;
\draw (B) node[right=2pt] {$B$} -- (C) node[above right] {$C$} 
   -- (O) -- cycle ;
\draw[dashed] (C) -- (B1) -- (C1) -- (B2) -- (C2) -- (B) ;
\draw (E) node[below right] {$E$} -- (F) node[above right] {$F$} 
    -- (G) node[above right] {$G$} ;
\draw[dashed] (G) -- (E1) -- (F1) -- (G1) -- (E) ;
\draw (D) node[below right=-2pt] {$D$} ;
\foreach \pt in {B,B1,B2,C,C1,C2,D,E,E1,F,F1,G,G1,O}
    \draw (\pt) node{$\bull$} ;
\end{tikzpicture}
\caption{Inscribed and circumscribed regular polygons}
\label{fg:Huygens-VI} 
\end{figure}

\begin{proof}
Given a circle with center $O$; let there be inscribed in it an
equilateral polygon%
\footnote{%
An equilateral polygon inscribed in a circle is actually a
\textit{regular} polygon.}
(say a hexagon), one of whose sides is $BC$; and let $EFG\cdots$ be
circumscribed and similar to it, with the sides tangent to the circle
at the vertices of the first polygon: see Figure~\ref{fg:Huygens-VI}.

We claim that the area of the circle is less than two thirds the area
of polygon $EFG\cdots$ plus one third of that of polygon~$BC\cdots$,
that is,
\begin{equation}
S < \frac{2}{3}(\polygon EFG\cdots) + \frac{1}{3}(\polygon BC\cdots).
\label{eq:area-estimate-bis} 
\end{equation}

To see that, draw the radii $OB$ and $OC$. Then, since $\tri BFC$
rests on the base $BC$ of the segment~$BDC$, with its other sides
tangent to the segment $BDC$, Theorem~\ref{th:Heron-Huygens-2}
(Theorem~IV of \emph{Inventa}) shows that
$$
(\segment BDC) < \frac{2}{3} (\tri BFC).
$$
It follows that
$$
(\tri OBC) + \frac{2}{3} (\tri BFC) > (\sector OBC).
$$
In other words,
\begin{align*}
(\tri OBC) 
+ \frac{2}{3} \bigl[ (\quadrangle OBFC) - (\tri OBC) \bigr]
&> (\sector OBC),
\\
\text{or} \quad
\frac{2}{3} (\quadrangle OBFC) + \frac{1}{3} (\tri OBC)
&> (\sector OBC).
\end{align*}
The area estimate \eqref{eq:area-estimate-bis} is then obtained by
taking the sum as many times as copies of the $\sector OBC$ are
contained in the circle. The theorem follows, since
\eqref{eq:area-estimate-bis} is just a restatement of the estimate
\eqref{eq:area-estimate}.
\end{proof}

\begin{thm}[Theorem VIII of \emph{Inventa}] 
\label{th:Huygens-VIII}
Given a circle, if at the extremity of a diameter a tangent is drawn,
and if from the opposite extremity of the diameter a line is drawn
which cuts the circumference and meets the tangent produced, then
\textbf{two thirds} of the intercepted tangent plus \textbf{one third}
of the line dropped from the point of intersection perpendicular to
the diameter are \textbf{greater} than the adjacent \textbf{subtended
arc}.
\end{thm}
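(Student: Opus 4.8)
The plan is to recognize Theorem~VIII as the \emph{per-sector} form of the area estimate of Lemma~\ref{lm:area-estimate} (Theorem~VI), reached through one decisive new piece of geometry. Label the configuration so that $AB$ is the given diameter of a circle of centre~$O$ and radius~$r$, the tangent is drawn at~$B$, the secant from the opposite extremity~$A$ cuts the circle again at~$E$ and meets the tangent (produced) at~$L$, and $G$ is the foot of the perpendicular from~$E$ to~$AB$. Then the intercepted tangent is $BL$, the dropped perpendicular is $EG$, and the adjacent subtended arc is $\arc{EB}$, so the goal is
$$
\frac{2}{3}\,BL + \frac{1}{3}\,EG > \arc{EB}.
$$

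First I would introduce the point $F$ where the tangent at~$E$ meets the tangent at~$B$ (i.e.\ the line $BL$), so that $\tri EFB$ is exactly the circumscribed triangle on the chord~$EB$ to which Heron--Huygens~II (Theorem~\ref{th:Heron-Huygens-2}) applies. The crux is the bridging lemma that $F$ is the \emph{midpoint} of $BL$. Since $\angle AEB = \tfrac{\pi}{2}$ (it is inscribed in a semicircle) and $E$ lies between $A$ and~$L$, we also have $\angle BEL = \tfrac{\pi}{2}$; hence $\tri BEL$ is right-angled at~$E$, and the midpoint $F$ of its hypotenuse $BL$ is equidistant from the three vertices, $FB = FE = FL$. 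Comparing $\tri OBF$ and $\tri OEF$, which share~$OF$, have $OB = OE = r$, and have $FB = FE$, gives $\tri OBF \cong \tri OEF$ by SSS, so $\angle OEF = \angle OBF = \tfrac{\pi}{2}$. Thus $EF \perp OE$, i.e.\ $EF$ is tangent at~$E$, confirming that the tangent at~$E$ passes through the midpoint~$F$ of~$BL$.

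With this in hand the three segments translate into areas. Because $EG \perp AB$ and $OB$ lies along~$AB$, one has $(\tri OEB) = \half r\,EG$ and $(\sector OEB) = \half r\,\arc{EB}$; and since $\tri OBF \cong \tri OEF$ with $\angle OBF = \tfrac{\pi}{2}$, the circumscribed quadrangle satisfies $(\quadrangle OEFB) = 2(\tri OBF) = OB \cdot BF = \half r\,BL$, using $BF = \half BL$. Applying Theorem~\ref{th:Heron-Huygens-2} to the segment on the chord~$EB$ gives $(\segment EB) < \tfrac{2}{3}(\tri EFB)$; adding $(\tri OEB)$ to both sides and regrouping, exactly as in the per-sector step inside the proof of Lemma~\ref{lm:area-estimate}, yields
$$
(\sector OEB) < \frac{2}{3}(\quadrangle OEFB) + \frac{1}{3}(\tri OEB).
$$
Substituting the three area formulas and cancelling the common factor $\half r$ produces the desired inequality.

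The area bookkeeping of the last paragraph is routine; the genuinely non-obvious step, and the one I expect to be the main obstacle, is the bridging lemma that the tangent at~$E$ bisects~$BL$. It is precisely this fact that converts the secant-from-$A$ picture of Theorem~VIII into the inscribed-plus-circumscribed picture of Theorem~VI, allowing the already-established per-sector area estimate to be reused on the single sector~$OEB$.
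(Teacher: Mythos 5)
Your proposal is correct and follows essentially the same route as the paper: Huygens' own proof also introduces the tangent at~$E$, shows it meets the intercepted tangent at its midpoint (the paper deduces $EG = CG = DG$ from the right angle at~$E$, the mirror image of your midpoint-of-hypotenuse argument), and then invokes the per-sector inequality $(\sector OEC) < \tfrac{2}{3}(\quadrangle OEGC) + \tfrac{1}{3}(\tri EOC)$ from the proof of Lemma~\ref{lm:area-estimate}. Your identification of the midpoint/tangency fact as the crux, and the subsequent area bookkeeping, match the paper's proof step for step.
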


\begin{figure}[htb]
\centering
\begin{tikzpicture}
\coordinate (O) at (0,0) ;
\coordinate (B) at (-2,0) ;
\coordinate (C) at (2,0) ;
\coordinate (E) at (70:2cm) ;
\coordinate (F) at ($ (B)!(E)!(C) $) ;
\coordinate (Ld) at (2,-1.5) ; \coordinate (Lu) at (2,3.2) ;
\coordinate (D) at (intersection of B--E and C--Lu) ;
\draw (E) ++ (-20:1cm) coordinate (Eg) ;
\coordinate (G) at (intersection of E--Eg and C--Lu) ;
\draw (O) circle(2cm) ;
\draw ($ (E)!-0.5!(G) $) -- ($ (E)!1.5!(G) $) ;
\draw (B)  node[below left] {$B$} -- (C) node[right] {$C$}
   -- (G) node[above right] {$G$} -- (D) node[right] {$D$} -- cycle ;
\draw (Ld) -- (Lu)  ;
\draw (E) node[above=2pt] {$E$} -- (F) node[below] {$F$} ;
\draw (O) node[below left] {$O$} -- (E) -- (C) ;
\draw[dashed] (O) -- (G) ;
\draw[gray] (C) ++(-0.2,0) -- ++(0,0.2) -- ++(0.2,0) ;      
\draw[gray] (E) ++(-145:2mm) -- ++(-55:2mm) -- ++(35:2mm) ; 
\draw[gray] (F) ++(0.2,0) -- ++(0,0.2) -- ++ (-0.2,0) ;     
\foreach \pt in {B,C,D,E,F,G,O} \draw (\pt) node{$\bull$} ;
\end{tikzpicture}
\caption{Another arc-length comparison}
\label{fg:Huygens-VIII} 
\end{figure}

\begin{proof}
Take a circle with center $O$ and diameter $BC$; and draw from $C$ a
line $CD$ tangent to the circle. And let a line $BD$, drawn from the
other extremity of the diameter, meet this line at~$D$ and intersect
the circumference at $E$; let $EF$ be the perpendicular from~$E$ to
the diameter $BC$ (see Figure~\ref{fg:Huygens-VIII}). One must prove
that
\begin{equation}
\boxed{\arc{CE} < \frac{2}{3}\,CD + \frac{1}{3}\,EF}\,.
\label{eq:Huygens-VIII} 
\end{equation}
Join $OE$ and $CE$. At the point $E$ draw a tangent to the
circle which meets the tangent $CD$ at~$G$. Then
$$
EG = CG = DG.
$$
Indeed, if we draw a circle with center $G$ which passes through the
points $C$ and~$E$, it will also pass through the point $D$ because
$\angle CED$ is a right angle. By the proof of
Lemma~\ref{lm:area-estimate},
\begin{align*}
(\sector OEC) < \frac{2}{3} (\quadrangle OEGC)
+ \frac{1}{3} (\tri EOC).
\end{align*} 
Now, $(\quadrangle OEGC) = 2(\tri OCG)$ equals the area of a triangle
with base $2CG = CD$ and an altitude $OC$; whereas $(\tri OEC)$ equals
the area of a triangle with base $EF$ and the same altitude $OC$.
Therefore,
\begin{align*}
\frac{2}{3} (\quadrangle OEGC) + \frac{1}{3} (\tri EOC)
&= \biggl( \tri \text{ with base $\frac{2}{3} CD + \frac{1}{3} EF$ 
and altitude $OC$} \biggr)
\\
&= \frac{1}{2} \biggl( \frac{2}{3} CD + \frac{1}{3} EF \biggr) \. OC
\\
& > (\sector OEC) = \frac{1}{2} \arc{CE} \. OC,
\end{align*}
which entails the required inequality \eqref{eq:Huygens-VIII}.
\end{proof}

\begin{thm}[Second Snell Theorem: Theorem IX of \emph{Inventa}] 
\label{th:Huygens-IX}
The circumference of a circle is less than \textbf{two thirds} of the
perimeter of an equilateral polygon \textbf{inscribed} in it plus
\textbf{one third} of a similar \textbf{circumscribed} polygon.
\end{thm}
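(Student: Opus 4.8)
The plan is to lift the single-sector inequality of Theorem~\ref{th:Huygens-VIII} to a statement about whole polygons by summing it over the arcs that the sides of a regular polygon cut from the circle. This is the exact counterpart of the way the single-arc inequality~\eqref{eq:Snell-any-arc} was summed in Theorem~\ref{th:Huygens-VII} to produce the polygonal lower bound~\eqref{eq:Snell-first} (and, downstream, Cusa's bound in Theorem~\ref{th:Huygens-XIII}).

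Concretely, I would inscribe a regular polygon and, for each of its sides, run the construction of Theorem~\ref{th:Huygens-VIII}: through one endpoint of the side draw the diameter, let the secant from its far end meet the circle again and then meet the tangent at the near endpoint, and drop the perpendicular from that second intersection to the diameter. For the arc belonging to that side, \eqref{eq:Huygens-VIII} yields $\arc{CE} < \twothirds\,CD + \third\,EF$, with $CD$ the intercepted tangent and $EF$ Huygens' ``sine'' of the arc. Adding these inequalities over all the sides, the arcs on the left reassemble into the whole circumference~$C$; the tangent segments combine into the perimeter of the circumscribed polygon; and the sine segments combine into the perimeter of the inscribed polygon, exactly as the sine~$c$ did in Theorem~\ref{th:Huygens-VII}, where two consecutive sines span one chord of the coarser polygon. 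Collecting the three sums yields the circumference bounded by the asserted $\third$--$\twothirds$ combination of the inscribed and circumscribed perimeters. (Alternatively, one may start from the area estimate~\eqref{eq:area-estimate} of Lemma~\ref{lm:area-estimate} and pass from areas to perimeters.)

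The hard part is precisely this passage from the one-sector estimate to the perimeter statement. Theorem~\ref{th:Huygens-VIII} is about a single sector, so one must pair each side of the polygon with exactly the right arc, track the halving and doubling that tie the sines of the arcs to the chords of the inscribed polygon (the relation between an $n$-gon and a $2n$-gon), and make sure that every tangent and every sine segment is counted once and only once. I expect the bookkeeping in this identification -- rather than any new geometric inequality -- to be the main obstacle; once the three sums are correctly matched to~$C$ and the two perimeters, the theorem drops out immediately.
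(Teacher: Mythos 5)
Your overall framework---prove a per-arc inequality and sum it around the circle---is indeed how the paper finishes, but your claim that after the bookkeeping ``the theorem drops out immediately'' is wrong: summing Theorem~\ref{th:Huygens-VIII} cannot produce Theorem~\ref{th:Huygens-IX}, because Theorem~\ref{th:Huygens-VIII} puts the weight $\twothirds$ on the \emph{tangent intercept} and $\third$ on the \emph{sine}, and summation preserves those weights, whereas the theorem to be proved puts $\twothirds$ on the \emph{inscribed} perimeter and only $\third$ on the \emph{circumscribed} one. Concretely, in a circle of radius~$r$ the intercepted tangent of Theorem~\ref{th:Huygens-VIII} for an arc of central angle $\th$ has length $2r\tan(\th/2)$ and the sine is $r\sin\th$. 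If you apply this to the arcs cut off by the sides of the inscribed $n$-gon ($\th = 2\pi/n$), the tangent intercepts do sum to $C'_n$, but the sines sum to $nr\sin(2\pi/n) = C_n\cos(\pi/n)$, \emph{not} to $C_n$; you get $C < \twothirds C'_n + \third C_n\cos(\pi/n)$, which a short computation shows is strictly \emph{weaker} than the theorem's bound, so the theorem cannot be recovered from it. If instead you use half-arcs ($\th = \pi/n$) so that the sines sum to $C_n$ (your ``two consecutive sines span one chord''), then the tangent intercepts are $2r\tan(\pi/2n)$ and sum to $C'_{2n}$: you get $C < \twothirds C'_{2n} + \third C_n$, which again has $\twothirds$ on a circumscribed perimeter and moreover involves the $2n$-gon, not a \emph{similar} polygon. (Your parenthetical alternative via the area estimate \eqref{eq:area-estimate} fails identically, since $A_n = \half r C_n \cos(\pi/n)$ while $A'_n = \half r C'_n$.) The half-arc bound is in fact valid and even tighter than the theorem's, but deducing the theorem from it requires $2\,C'_{2n} < C_n + C'_n$, which is precisely the inequality your proposal never supplies.

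That inequality is the real content of the paper's proof, and it is geometry, not bookkeeping. In the paper's figure, $CH$ is a half-side of the inscribed polygon, $EG$ the parallel half-side of the circumscribed polygon (both bisected by the diameter $BG$), and $K$ is where $BC$ produced meets the tangent at~$G$; Theorem~\ref{th:Huygens-VIII} gives $\arc{CG} < \twothirds KG + \third CH$, whereas what must be summed is $\arc{CG} < \twothirds CH + \third EG$. The bridge between the two is
\begin{equation*}
EG + CH > 2\,KG, \word{equivalently} \sin\th + \tan\th > 4\tan(\th/2),
\end{equation*}
and the paper proves it by a dedicated argument: choosing $L$ with $HL = GH$ makes $BG$, $BH$, $BL$ an arithmetic progression, whence $OG/OH = BG/BL > BG^2/BH^2$; the similar triangles giving $OG/OH = EG/CH$ and $BG/BH = KG/CH$ then yield $EG \. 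CH > KG^2$, and the AM--GM inequality delivers $EG + CH > 2\,KG$. Adding $\third CH$ to both sides of $\third(EG+CH) > \twothirds KG$ swaps the weights onto the correct objects, and only then does summation over the half-arcs finish the proof. Your proposal is missing exactly this step.
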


\begin{figure}[htb]
\centering
\begin{tikzpicture}
\coordinate (O) at (0,0) ;
\coordinate (B) at (-150:2cm) ;
\coordinate (C) at (0:2cm) ; \coordinate (D) at (60:2cm) ;
\coordinate (C1) at (120:2cm) ; \coordinate (D1) at (180:2cm) ;
\coordinate (C2) at (-120:2cm) ; \coordinate (D2) at (-60:2cm) ;
\coordinate (G) at (30:2cm) ;
\coordinate (E) at (0:2.309cm) ; \coordinate (F) at (60:2.309cm) ;
\coordinate (E1) at (120:2.309cm) ; \coordinate (F1) at (180:2.309cm) ;
\coordinate (E2) at (-120:2.309cm) ; \coordinate (F2) at (-60:2.309cm) ;
\coordinate (H) at (intersection of C--D and O--G) ;
\coordinate (K) at (intersection of B--C and E--F) ;
\coordinate (L) at ($ (H)!-1.0!(G) $) ;
\draw (O) node[above left] {$O$} circle(2cm) ;
\draw (C) node[below left] {$C$} -- (D) node[below left] {$D$} ;
\draw[dashed] (D) -- (C1) -- (D1) -- (C2) -- (D2) -- (C) ;
\draw (E) node[below right] {$E$} -- (F) node[above right] {$F$} ;
\draw[dashed] (F) -- (E1) -- (F1) -- (E2) -- (F2) -- (E) ;
\draw (O) -- (E) ;
\draw (B) node[below left] {$B$} -- (L) node[above left] {$L$}
   -- (H) node[below] {$H$} -- (G) node[right] {$G$} ;
\draw (B) -- (K) node[above right] {$K$} ;
\foreach \pt in {B,C,C1,C2,D,D1,D2,E,E1,E2,F,F1,F2,G,H,K,L,O}
    \draw (\pt) node{$\bull$} ;
\end{tikzpicture}
\caption{Circumference versus polygon perimeters}
\label{fg:Huygens-IX} 
\end{figure}

\begin{proof}
In symbols, we must prove that
\begin{equation}
\boxed{C < \frac{2}{3}\, C_n + \frac{1}{3}\, C'_n}
\label{eq:Huygens-IX} 
\end{equation}
where $C_n$ denotes the perimeter of the inscribed polygon of
$n$~sides, $C_n'$ is the perimeter of the circumscribed polygon of
$n$~sides, and $C$ is the circumference of the circle.

Given a circle with center $O$, let there be inscribed in it an
equilateral polygon, one of whose sides is $CD$. Let there be
circumscribed another polygon with sides parallel to the former; call
$EF$ its side which is parallel to~$CD$. (See
Figure~\ref{fg:Huygens-IX}.) \textit{We have to prove that the
circumference of the circle is less than two thirds of the perimeter
of polygon $CD\cdots$ plus one third of the perimeter of
polygon~$EF\cdots$}.

Draw the diameter $BG$ of the circle that bisects the side $CD$ of the
inscribed polygon at~$H$, and the parallel side $EF$ of the
circumscribed polygon at~$G$ (it is obvious that $G$ is the point of
tangency of the side $EF$). Choose $L$ on the diameter $BG$ between
$O$ and $H$ such that $HL = GH$; draw $BC$, produced to meet the side
$EF$ at~$K$, and extend the radius $OC$ to meet the vertex $E$ of the
circumscribed polygon. Then
$$
HL = GH \implies  BL = 2OH \implies  \frac{OG}{OH} = \frac{BG}{BL}\,.
$$
But now
$$
\frac{BH}{BL} > \frac{BG}{BH}
$$
since $BG > BH > BL$, exceeding one another by the same amount. 
Therefore,
$$
\frac{OG}{OH} = \frac{BG}{BL}
= \frac{BG}{BH}\, \frac{BH}{BL} > \frac{BG^2}{BH^2}\,.
$$
Consequently, by similar triangles,
$$
\frac{OG}{OH} = \frac{EG}{CH}
\word{and} \frac{BG}{BH} = \frac{KG}{CH}
\implies  \frac{EG}{CH} > \frac{KG^2}{CH^2} \,.
$$
This in turn implies that
$$
\frac{EG}{KG} > \frac{KG}{CH}\,.
$$
Since the arithmetic mean of $EG$ and $CH$ is greater than their
geometric mean, we infer
$$
EG + CH > 2\sqrt{EG \. CH} > 2\sqrt{KG^2} = 2\,KG.
$$
So, in particular, $\third(EG + CH) > \twothirds KG$.

Hence, finally,
\begin{equation}
\frac{1}{3} EG + \frac{2}{3} CH
> \frac{2}{3} KG + \frac{1}{3} CH > \arc{CG},
\label{eq:Huygens-IX-arc} 
\end{equation}
where the last inequality follows from \eqref{eq:Huygens-VIII} in the
previous Theorem~\ref{th:Huygens-VIII}. 

And now one extends the inequality \eqref{eq:Huygens-IX-arc} for the
arc $\arc{CG}$ to the entire circle.
\end{proof}

\paragraph{Error Analysis}
If $p'_n$ is the perimeter of a circumscribed regular $n$-gon with
central angle $\frac{2\pi}{n}$ in a circle of radius~$1$, then
$$
p'_n = 2n \tan\Bigl( \frac{\pi}{n} \Bigr) \word{and}
p_n = 2n \sin\Bigl( \frac{\pi}{n} \Bigr),
$$
and we obtain
$$
\frac{2}{3} p_n + \frac{1}{3} p'_n
= 2\pi + \frac{\pi^5}{10n^4} + \frac{\pi^7}{8n^6} +\cdots
$$
which shows the approximation to be in \textit{excess}; and the
\textit{error} is $\dfrac{\pi^5}{10n^4} \doteq \dfrac{30.6}{n^4}$
which is small for large~$n$. One also sees that it is somewhat less
accurate than the lower bound, although it is of the same order.


\subsection{Snell's upper bound} 
\label{ssc:Snell-upper-bound}

As we pointed out earlier, Huygens proves two famous inequalities. One
of them is the Cusa inequality of a previous section. The second one
we discuss now. It is fascinating that the figure involved is the
\textit{Archimedes trisection figure} -- see, for instance,
\cite{Heath1912} and the last section of this paper.

\begin{thm}[Snell's Upper Bound: Theorem XII of \emph{Inventa}] 
\label{th:Huygens-XII}
If between the diameter produced of a circle and its circumference a
line equal to the radius is inserted, which when produced cuts the
circle and meets a tangent to the circle at the other extremity of the
diameter, this line will intercept a part of the tangent
\textbf{greater} than the adjacent intercepted \textbf{arc}.
\end{thm}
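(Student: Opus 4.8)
The plan is to recognise the stated configuration as Archimedes' trisection figure, and then to reduce the inequality to the Snell single-arc estimate already produced inside the proof of Theorem~\ref{th:Huygens-IX}. Fix the circle with centre~$O$ and radius~$r$, diameter~$AB$, and the tangent at~$B$; produce the diameter beyond~$A$ to the point~$C$ at which the neusis meets it, let $F$ be the first intersection of the line with the circle (so that the inserted segment is $CF$, with $CF = r$), and let the line $CF$ produced cut the circle again at~$E$ adjacent to~$B$ and meet the tangent at~$L$. Write $\varphi := \angle FCB$ for the inclination of the line to the diameter. First I would establish the trisection. Since $OF = CF = r$, the triangle $\tri OCF$ is isosceles, so $\angle FOC = \varphi$ and the arc $\arc{AF}$ subtends the central angle~$\varphi$; since $OF = OE = r$, the triangle $\tri OFE$ is isosceles, and passing to the exterior angle at~$F$ (where $\angle OFE = 2\varphi$) gives $\angle FOE = \pi - 4\varphi$ and hence $\angle EOB = 3\varphi$. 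Thus $\arc{EB} = 3\,\arc{AF}$: the line trisects the arc.

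Next I would evaluate the intercept $BL$ in closed form. The tangent at~$B$ is perpendicular to the diameter, so $\tri LBC$ is right-angled at~$B$ and $BL = BC \cdot \tan\varphi$. Dropping the perpendicular from~$F$ onto the base $OC$ of the isosceles triangle $\tri OCF$ bisects that base, whence $OC = 2r\cos\varphi$ and $BC = BO + OC = r(1 + 2\cos\varphi)$. Therefore
$$
BL = r(1 + 2\cos\varphi)\tan\varphi = 2r\sin\varphi + r\tan\varphi .
$$
The key observation is that $2r\sin\varphi$ is \emph{twice the sine}, and $r\tan\varphi$ is \emph{the tangent}, of the sub-arc $\arc{AF}$, whose central angle is exactly~$\varphi$.

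Finally, substituting $\arc{EB} = 3r\varphi$, the target inequality $BL > \arc{EB}$ becomes $2r\sin\varphi + r\tan\varphi > 3r\varphi$, i.e.
$$
\tfrac{2}{3}\sin\varphi + \tfrac{1}{3}\tan\varphi > \varphi .
$$
This is exactly the Snell single-arc estimate proved en route to Theorem~\ref{th:Huygens-IX}: the derivation there, which combines \eqref{eq:Huygens-VIII} with the arithmetic--geometric mean inequality, applies verbatim to any arc below a semicircle, and the resulting bound \eqref{eq:Huygens-IX-arc}, read for the arc $\arc{AF}$, is precisely the displayed inequality. Since $C$ lies outside the circle, $\cos\varphi > \tfrac{1}{2}$, so $\varphi < \tfrac{\pi}{3}$ and the arc $\arc{AF}$ is comfortably less than a semicircle; the estimate therefore applies and the theorem follows.

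The step I expect to require the most care is the synthetic verification of the trisection $\arc{EB} = 3\,\arc{AF}$ together with the configuration facts on which it silently depends -- that the hypothesis $CF = r$ forces $C$ outside the circle, that $F$ lies between $C$ and~$E$, and that $3\varphi < \pi$ so the relevant arc stays below a semicircle -- exactly the kind of bookkeeping Huygens handles in a footnote in the companion Theorem~\ref{th:Huygens-XIII}. The conceptual heart, however, is the collapse of the tangent intercept to $2r\sin\varphi + r\tan\varphi$: once this identity is secured, Snell's upper bound for the \emph{small} arc $\arc{AF}$ -- already in hand from Theorem~\ref{th:Huygens-IX} -- yields the inequality for the \emph{triple} arc $\arc{EB}$ with no further work, which is the elegant mechanism of the trisection figure.
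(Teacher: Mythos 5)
Your proposal is correct, and its strategy is the same as the paper's: exhibit the trisection, express the whole tangent intercept as twice the sine plus the tangent of the small arc, and feed this into the single-arc Snell estimate \eqref{eq:Huygens-IX-arc} obtained in the proof of Theorem~\ref{th:Huygens-IX}. The difference lies entirely in how the two supporting facts are verified. The paper stays synthetic, in Huygens' own idiom: it draws the auxiliary line $HL$ through the center parallel to the neusis line, proves $DK = HK$ from the congruent triangles $\tri EDK$ and $\tri OHK$, deduces that the radius $OA$ bisects the chord $DH$ and the arc $\arc{DAH}$, obtains $\arc{BF} = 3\,\arc{AH}$ from parallel chords, and then reads off the intercept identity $BG = BL + LG = BL + 2\,HK$ from the parallelogram $DHLG$. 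You replace all of this with the standard isosceles/exterior-angle chase and the computation $BG = BC \tan\varphi = r(1 + 2\cos\varphi)\tan\varphi = 2r\sin\varphi + r\tan\varphi$. Tellingly, your two identities appear verbatim in the paper, but only in the ``Error Analysis'' paragraph \emph{after} the proof: your proposal in effect promotes that modern gloss to the proof itself, which is shorter and more transparent, at the cost of the purely synthetic character the paper aims to showcase. One refinement to your bookkeeping: the condition that the inserted segment $CF = r$ terminate at the \emph{first} intersection (equivalently, that $F$ lie strictly between $C$ and $E$) forces $\varphi < \pi/4$, not merely the $\varphi < \pi/3$ you extract from $C$ lying outside the circle; this sharper bound is also exactly what keeps the apex angle $\pi - 4\varphi$ of your second isosceles triangle positive, so your angle chase is only valid on that smaller range -- which is, fortunately, the whole range the hypothesis permits, and it keeps the small arc safely below a quadrant where the Theorem~\ref{th:Huygens-IX} estimate applies.
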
 

\begin{figure}[htb]
\centering
\begin{tikzpicture}
\coordinate (O) at (0,0) ;
\coordinate (A) at (-2,0) ; 
\coordinate (B) at (2,0) ; 
\coordinate (E) at (-3.8,0) ; 
\coordinate (G) at (2,2) ; 
\coordinate (Oh) at (-5.8,-2) ;
\coordinate (Om) at (5.8,2) ; 
\coordinate (L) at (intersection of B--G and Oh--Om) ;
\draw[name path=bigOh] (O) circle(2cm) ;
\path[name path=Eg] (E) -- (G) ;
\path[name intersections={of=Eg and bigOh, by={F,D}}] ;
\path[name path=Hl] (Oh) -- (Om) ;
\path[name intersections={of=Hl and bigOh, by={M,H}}] ;
\coordinate (K) at (intersection of A--B and D--H) ;
\draw (B) node[below right] {$B$} -- (G) node[right] {$G$} ;
\draw (E) node[below left] {$E$} -- (A) node[below left] {$A$}
   -- (O) node[below=2pt] {$O$} -- (B) ;
\draw (E) node[below left] {$E$} -- (D) node[above left] {$D$}
   -- (F) node[above] {$F$} -- (G) ;
\draw (H) node[below left] {$H$} 
   -- (M) node[above left=-2pt] {$M$} -- (L) node[right] {$L$} ;
\draw (D) -- (K) node[above right=-2pt] {$K$} -- (H) ;
\foreach \mk in {A,B} \draw (\mk) node{$\sst\circ$} ;
\foreach \pt in {D,E,F,G,H,L,K,M,O}
    \draw (\pt) node{$\bull$} ;
\end{tikzpicture}
\caption{Arc and tangent intercepts}
\label{fg:Huygens-XII} 
\end{figure}

\begin{proof}
Given a circle with center $O$ and diameter $AB$, produce the diameter
beyond~$A$ to a point~$E$, and take the point~$D$ on the circle such
that the line $ED$ is equal to the radius~$OA$. When produced, the
line $ED$ cuts the circumference again at~$F$ and meets at the
point~$G$ the tangent to the circle at~$B$. (See
Figure~\ref{fg:Huygens-XII}.) We shall prove that
\begin{equation}
\boxed{BG > \arc{BF}}\,.
\label{eq:Snells-own} 
\end{equation}
Draw $HL$ through the center~$O$ parallel to $EG$, meeting the
circumference at $H$ and~$M$ and the tangent $BG$ at~$L$. Draw the
line $DH$, cutting the diameter $AB$ at~$K$. Then
$$
\tri EDK \sim \tri OHK
$$
since the angles at $K$ are equal and $\angle DEK = \angle HOK$. But
$ED = OH$, and these sides are subtended by equal angles; hence,
$DK = HK$.

Therefore, the radius $OA$ bisects both the chord $DH$
(perpendicularly) and the arc $\arc{DAH}$. Since $HM \parallel DF$ by
construction, it follows that
$$
\arc{FM} = \arc{DH} = 2\,\arc{AH}.
$$

But $\arc{AH} = \arc{BM}$, and therefore
$$
\arc{BF} = \arc{BM} + \arc{FM} = 3\,\arc{AH}.
$$

Moreover, assuming that the radius $OA$ has length~$1$,
$$
HK = \text{sine of } \arc{AH} \word{and}
BL = \text{tangent of } \arc{AH}.
$$
These relations, together with formula \eqref{eq:Huygens-IX} --
or~\eqref{eq:Huygens-IX-arc} -- of the second Snell theorem, imply
that
$$
\frac{2}{3} HK + \frac{1}{3} BL > \arc{AH}.
$$
Noting that $DHLG$ is a parallelogram, we deduce the required
inequality \eqref{eq:Snells-own}:
\begin{align*}
2\,HK + BL > 3\,\arc{AH}
& \implies BL + DH > 3\,\arc{AH}
\\
& \implies BL + LG > 3\,\arc{AH} \implies BG > \arc{BF}.
\tag*{\qed} 
\end{align*}
\hideqed
\end{proof}

\paragraph{Error Analysis}
If we now put $x := \arc{AH}$, so that $3x = \arc{BF}$, then
$$
\begin{aligned}
LG = DH &= 2\sin x  \\
BL      &=  \tan x  \end{aligned} \Biggr\}
\implies BG = 2\sin x + \tan x.
$$
The estimate $\third \arc{BF} \doteq \third BG$ is Snell's own
approximation:
$$
x \doteq \frac{2\sin x + \tan x}{3} \,.
$$

The expansion
$$
\frac{2\sin x + \tan x}{3} 
= x + \frac{x^5}{20} + \frac{x^7}{56} +\cdots
$$
shows that the approximation is in \textit{excess}; and the error one
commits in the approximation is about $-x^5/20$.

If we apply the \textit{first} (Cusa) inequality
\eqref{eq:Snell-second} for $n = 6$ and this \textit{second} (Snell)
inequality \eqref{eq:Huygens-IX} for $n = 12$, we obtain
$$
3.1411\cdots < \pi < 3.1424\cdots
$$
which only give \emph{two}-place accuracy. Archimedes needed a
$96$-gon to obtain similar bounds. This shows the extraordinary
improvement that the Snell--Cusa inequalities achieve over Archimedes'
original computation.

If instead we apply the first (Cusa) inequality for $n = 30$ and the
second (Snell) inequality for $n = 60$, we arrive at
$$
3.1415917\cdots < \pi < 3.141594\cdots
$$
which shows that the decimal expansion of $\pi$ begins with
$\pi \doteq 3.14159$. Even with this \emph{five}-place accuracy, we
still don't need the full $96$~sides that Archimedes used.


\section{Huygens' barycentric theorems} 
\label{sec:barycenter}

\subsection{The barycenter} 
\label{ssc:barycenter}

After proving the Cusa--Snell inequalities, Huygens offers his own
very elegant approximation which is based on an observation about the
barycenter of a circular segment. What is novel and original is how
Huygens \textit{transforms the location of a barycenter into an
inequality about perimeters}.

\begin{thm}[Theorem XIV of \emph{Inventa}] 
\label{th:Huygens-XIV}
The barycenter of a circular segment divides the diameter of the
segment so that the part near the vertex is \textbf{greater} than the
rest, but \textbf{less} than \textbf{three halves} of it.
\end{thm}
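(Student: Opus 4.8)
The plan is to locate the barycenter by a single moment computation and then extract both inequalities from one monotonicity principle. By the reflection symmetry of the segment across its diameter $BD$, the barycenter $X$ lies on $BD$, so I would parametrize the segment by the depth $y$ measured from the vertex $B$ down toward the base, with $y$ running over $[0,h]$ where $h=BD$ is the height. Taking the circle to have center $O$ and radius $r$, the horizontal cross-section at depth $y$ has width $w(y)=2\sqrt{y(2r-y)}$, so the depth of the barycenter is
\[
\bar d := BX = \frac{\int_0^h y\,w(y)\,dy}{\int_0^h w(y)\,dy}.
\]
Since $BX=\bar d$ and $XD=h-\bar d$, the asserted chain $XD<BX<\frac{3}{2}XD$ is exactly the pair of inequalities $\frac{h}{2}<\bar d<\frac{3h}{5}$, which I would establish in turn.

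For the lower bound $\bar d>\frac{h}{2}$ I would use that the segment is \emph{less than a semicircle}, so $h<r$ and hence $w(y)=2\sqrt{y(2r-y)}$ is strictly increasing on $[0,h]$. A cross-section that widens toward the base biases the mean depth downward: by Chebyshev's correlation inequality (for an increasing weight the product $(y-y')(w(y)-w(y'))$ is nonnegative), the $w$-weighted mean of $y$ exceeds its unweighted mean $\frac{1}{h}\int_0^h y\,dy=\frac{h}{2}$. This gives $\bar d>\frac{h}{2}$, that is, $BX>XD$.

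For the upper bound $\bar d<\frac{3h}{5}$ I would invoke Huygens' governing idea and compare with the parabola through $A$, $B$, $C$ having axis $BD$. Writing $\ell=AC$ for the base, this parabolic segment has width $w_P(y)=\ell\sqrt{y/h}$; by Theorem~\ref{th:parabola-quadrature} together with Archimedes' location of the barycenter of a parabolic segment, its barycenter sits at depth exactly $\frac{3h}{5}$. The key observation is that the cross-sectional ratio $w(y)/w_P(y)=\frac{2\sqrt{h}}{\ell}\sqrt{2r-y}$ is strictly decreasing in $y$ and equals $1$ at the base $y=h$ (since $\ell^2=4h(2r-h)$), so the circular segment's extra width over the parabolic one is concentrated near the vertex. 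Writing $w=\rho\,w_P$ with $\rho$ decreasing and applying Chebyshev's inequality once more, now with base weight $w_P(y)\,dy$ and the increasing function $y$, shows that inserting the decreasing factor $\rho$ lowers the mean depth: $\bar d<\frac{\int_0^h y\,w_P\,dy}{\int_0^h w_P\,dy}=\frac{3h}{5}$, which is $BX<\frac{3}{2}XD$.

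The main obstacle is the upper bound, where the comparison with the parabola is indispensable: the bound is \emph{sharp} as the segment shrinks (the circular barycenter ratio tends to the parabolic value $\frac{3}{2}$ as $h\to0$), so no cruder estimate can succeed. The delicate point to check carefully is that the ratio $w/w_P$ is \emph{genuinely} monotone in the depth — it is not enough that the two widths agree at the base and vanish together at the vertex — since it is precisely this monotonicity that feeds the correlation inequality and delivers the strict bound $\bar d<\frac{3h}{5}$.
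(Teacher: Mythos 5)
Your proof is correct, but it takes a genuinely different route from the paper's. The paper argues synthetically, in Huygens' own style. For the lower bound it frames the segment in a rectangle whose midline passes through the midpoint $E$ of the diameter: the rectangle's barycenter is at $E$, and both corrections needed to turn the rectangle into the segment (removing the corner regions above the midline, adding the leftover regions below it) push the barycenter below $E$. For the upper bound it uses a parabola with vertex $B$, axis $BD$, and \emph{latus rectum} $SP$, chosen so that it crosses the circle exactly at the level $S$ of depth $\frac{3}{5}BD$ and meets the base line \emph{outside} the chord $AC$; Archimedes' \emph{Equilibrium of Planes} II.8 places that parabolic segment's barycenter at $S$, and converting it into the circular segment adds material above the line $UV$ and removes material below it, moving the barycenter toward $B$. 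Your argument replaces both region-shifting steps with the Chebyshev correlation inequality applied to the width function $w(y) = 2\sqrt{y(2r-y)}$: positive correlation of $y$ with the increasing $w$ (valid precisely because $h<r$, i.e.\ the segment is less than a semicircle) gives $\bar d > \tfrac{h}{2}$, and negative correlation of $y$ with the decreasing ratio $\rho = w/w_P = \sqrt{(2r-y)/(2r-h)}$ gives $\bar d < \tfrac{3h}{5}$. Note also that your comparison parabola (through $A$, $B$, $C$, inscribed in the segment, latus rectum $2r-h$) is \emph{not} Huygens' parabola (latus rectum $2r - \tfrac{3}{5}h$, exiting the segment below the level of $S$); both comparisons work because any parabolic segment has its barycenter at $\tfrac{3}{5}$ of its height. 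What each approach buys: the paper's proof is calculus-free and faithful to the seventeenth-century source, while yours is shorter, makes the role of monotonicity explicit, and does not need Archimedes' barycenter theorem as a black box (your $\tfrac{3h}{5}$ is a one-line integral). One small citation quibble: you invoke Theorem~\ref{th:parabola-quadrature} (the quadrature of the parabola) for the parabolic barycenter, but that theorem concerns area, not the center of gravity; the relevant classical reference is \emph{Equilibrium of Planes} II.8, cited in the paper's footnote --- though since you can simply compute $\int_0^h y\sqrt{y}\,dy \big/ \int_0^h \sqrt{y}\,dy = \tfrac{3h}{5}$, nothing is at stake.
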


\begin{figure}[htb]
\centering
\begin{tikzpicture}
\coordinate (A) at (10:2cm) ;
\coordinate (B) at (90:2cm) ;
\coordinate (C) at (170:2cm) ;
\coordinate (D) at ($ (A)!0.5!(C) $) ;
\coordinate (E) at ($ (B)!0.5!(D) $) ;
\draw (E) ++ (2,0) coordinate (Ef) ;
\draw (E) ++ (-2,0) coordinate (Eg) ;
\path[name path=bigOh] (A) arc(10:170:2cm) ;
\path[name path=Fg] (Ef) -- (Eg) ;
\path[name intersections={of=Fg and bigOh, by={F,G}}] ;
\coordinate (L) at ($ (A)!(G)!(C) $) ;
\coordinate (J) at ($ (A)!(F)!(C) $) ;
\coordinate (K) at ($ (J)!2.0!(F) $) ;
\coordinate (H) at ($ (L)!2.0!(G) $) ;
\draw (A) arc(10:170:2cm) -- cycle ;  
\draw (A) node[right] {$A$} -- (C) node[left] {$C$} ;
\draw (B) node[above=2pt] {$B$} -- (D) node[below=2pt] {$D$} ;
\draw (F) node[right=2pt] {$F$} -- (E) node[above right] {$E$}
   -- (G) node[left=2pt] {$G$} ;
\draw[dashed] (J) node[below=2pt] {$J$}
   -- (K) node[above right] {$K$} -- (H) node[above left] {$H$}
   -- (L) node[below=2pt] {$L$} ;
\foreach \pt in {A,B,C,D,E,F,G,H,J,K,L}
   \draw (\pt) node{$\bull$} ;
\end{tikzpicture}
\caption{A rectangle framing a circular segment}
\label{fg:Huygens-XIV-one} 
\end{figure}

\begin{proof}
Take a segment $ABC$ of a circle (and let it be put less than a
semicircle because others do not satisfy the proposition), and let
$E$ be the midpoint of its diameter~$BD$.

\paragraph{Step 1}
First, we show that the barycenter of the segment $ABC$ lies
\textit{below} this midpoint $E$ (viewed from the vertex~$B$).

It is evident from considerations of symmetry that the barycenter lies
on the diameter~$BD$.

Draw a line through $E$ parallel to the base $AC$, meeting the
circumference on either side at points $F$ and~$G$. Draw the lines
$KJ$ through~$F$ and $HL$ through~$G$, perpendicular to~$AC$; these,
together with the line $KH$ tangent to the segment at its vertex~$B$,
form the rectangle~$KHLJ$: see Figure~\ref{fg:Huygens-XIV-one}. Since,
by assumption, the segment $ABC$ is less than a semicircle, the
rectangle $FGLJ$, which is one half of the given rectangle, is
contained \textit{within} the segment $AFGC$; and the regions $AFJ$
and $LGC$ are left over.

But $KHGF$, the other half of the rectangle $KHLJ$, \textit{includes}
the segment $FBG$; and also includes the regions $FBK$ and $GBH$.
Since those two regions lie wholly \textit{above} the line $FG$, the
\emph{barycenter of their union} will be located above~it.

Now the point $E$, on this same line $FG$, is the barycenter of the
whole rectangle $KHLJ$. Therefore the barycenter of the remaining region
$BFJLGB$ will lie \textit{below} the line~$FG$.
  
But the barycenter of the pair of regions $AFJ$ and $LGC$ lies also
\textit{below} $FG$. Therefore, the barycenter of the magnitude
composed of these regions and the region $BFJLGB$, i.e., of the whole
segment $ABC$, must also be found below the line $FG$, and hence
\textit{below the point}~$E$ on the diameter.

\begin{figure}[htb]
\centering
\begin{tikzpicture}
\coordinate (O) at (0,0) ;
\coordinate (A) at (10:2cm) ;
\coordinate (B) at (90:2cm) ;
\coordinate (C) at (170:2cm) ;
\coordinate (D) at ($ (A)!0.5!(C) $) ;
\draw (A) ++ (0.25,0) coordinate (H) ;  
\draw (C) ++ (-0.25,0) coordinate (K) ;
\coordinate (L) at ($ (B)!0.3!(D) $) ;
\draw (L) ++ (2,0) coordinate (Ln) ;
\draw (L) ++ (-2,0) coordinate (Lq) ;
\coordinate (P) at (-90:2cm) ;
\coordinate (S) at ($ (B)!0.6!(D) $) ;
\draw (S) ++ (2,0) coordinate (Sf) ;
\draw (S) ++ (-2,0) coordinate (Sg) ;
\path[name path=bigOh] (O) circle(2cm) ;
\path[name path=Ug] (Sf) -- (Sg) ;
\path[name path=Ln] (Lq) -- (Ln) ;
\path[name intersections={of=Ug and bigOh, by={U,V}}] ;
\path[name intersections={of=Ln and bigOh, by={N,Np}}] ;
\draw (O) circle (2cm) ;
\draw (H) node[right, blue] {$H$} -- (A) node[below left] {$A$}
   -- (C) node[below right] {$C$} -- (K) node[left, blue] {$K$} ;
\draw (B) node[above] {$B$} -- (S) node[below left=-2pt] {$S$}
   -- (D) node[below left] {$D$} ;  
\draw (U) node[right] {$U$} -- (V) node[left] {$V$} ;
\draw[dashed] (D) -- (P) node[above right] {$P$} ;
\draw[blue, name path=Para] (H)  parabola bend (B)  (K)  ;
\path[name intersections={of=Ln and Para, by={M,Q}}] ;
\draw[dashed] (Q) node[above left, blue] {$Q$} -- (L) node[left] {$L$}
   -- (M) node[above=2pt, blue] {$M$} -- (N) node[right] {$N$} ;
\foreach \pt in {A,B,C,D,L,P,N,S,U,V}
   \draw (\pt) node{$\bull$} ;
\foreach \mk in {H,K,M,Q}  \draw[blue] (\mk) node{$\sst\circ$} ;
\end{tikzpicture}
\caption{A parabolic segment framing a circular segment}
\label{fg:Huygens-XIV-two} 
\end{figure}

\paragraph{Step 2}
Now cut the same diameter $BD$ at~$S$ so that $BS$ is \textit{three
halves} of the remainder~$SD$. We assert that the barycenter of the
segment $ABC$ is closer to~$B$ than the point~$S$.

Let $BP$ be the diameter of the whole circle, as in
Figure~\ref{fg:Heron-Huygens-1}. Draw a line through $S$ parallel to
the base $AC$, meeting the circumference in $U$ and~$V$. Next, draw a
\textit{parabola} with vertex~$B$, axis $BD$, and \textit{latus
rectum} equal to~$SP$; meeting the base $AC$ of the circular segment
at $H$ and~$K$. Then, since
$$
US^2 = VS^2 = BS \. SP,
$$
the parabola will cross the circle at the points $U$ and~$V$. (See
Figure~\ref{fg:Huygens-XIV-two}.)

Now, the arcs $\arc{BU}$ and $\arc{BV}$ of the parabola will fall
\textit{within} the circle, but the remaining arcs $\arc{UH}$ and
$\arc{VK}$ will lie \textit{outside}~it. We prove this as follows:
through any point $L$ on the diameter between $B$ and~$S$, draw the
line $NL$ parallel to the base $AC$, meeting the circumference of the
circle at~$N$ and the parabola at points $M$ and~$Q$. Now the
relations
$$
\begin{aligned}
NL^2 &= BL \. BP \\
ML^2 &= BL \. SP 
\end{aligned} \,\Biggr\} 
\word{together with}  BL \. BP > BL\. SP
$$
imply that $NL^2 > ML^2$, and hence $NL > ML = QL$.

The same holds for any such parallel line drawn between $B$ and~$S$;
and therefore the arcs $\arc{BU}$ and $\arc{BV}$ of the circumference
must lie entirely outside of the parabola.

Again, since
$$
\begin{aligned}
AD^2 &= BD \. DP \\
HD^2 &= BD \. SP \end{aligned} \,\Biggr\}  \implies  HD > AD,
$$
and the same will hold for any line parallel to the base drawn between
$S$ and~$D$. Therefore the arcs $\arc{UA}$ and $\arc{VC}$ will fall
\textit{within} the parabola.

Now we examine the regions $UNBM$, and $BQV$, as well as $HUA$ and
$VCK$. Since the latter two lie entirely below the line $UV$, the
barycenter of their union will also lie below it. But the barycenter
of the parabolic segment $HBK$ lies on $UV$ at the point~$S$.%
\footnote{%
{Archimedes, \textit{Equilibrium of Planes}, Book~II, Proposition~8
\cite{Heath1912}.
This theorem states that if $AO$ is the diameter of a parabolic
segment with vertex~$A$, and if $G$ is its barycenter, then 
$AG = \frac{3}{2}\,GO$.}}
Therefore, the barycenter of the remaining region $AUMBQVC$ will lie
\textit{above} the line $UV$. But the barycenters of the regions
$UMBN$ and $BVQ$ that also are above the line $UV$ will likewise lie
above that line. Therefore, the region composed of these two together
with $AUMBQVC$, i.e., the segment $ABC$ of the circle, will have its
barycenter above the line~$UV$. And since that barycenter lies on the
diameter $BD$, it will be \textit{nearer} the vertex $B$ than the
point~$S$.
\end{proof}

We now prove a theorem, also due to Huygens in an earlier work
\emph{Theoremata}~\cite{Huygens1651}, which relates the area of a
circular segment to the position of its barycenter.

\begin{thm}[Theorem VII of \emph{Theoremata}] 
\label{th:Huygens-Theoremata}
The area of a circular segment is to the area of the inscribed
triangle with the same base and height as two-thirds of the diameter
of the opposite segment is to the distance from the center of the
circle to the barycenter of the [original] segment.
\end{thm}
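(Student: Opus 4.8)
The plan is to recast the stated proportion as a single moment identity and then reduce everything to the circle's mean-proportional relation. Write $AD = \half AC$ for the half-chord, so the maximum triangle has area $(\tri ABC) = AD\.BD$, and let $Z$ be the barycenter of the segment, which by symmetry lies on the diameter line $BP$ together with~$O$. Since the segment is less than a semicircle, it lies entirely on the $B$-side of the line through~$O$ parallel to the chord~$AC$; hence, by the definition of the barycenter, the perpendicular distance from~$Z$ to that line equals~$OZ$, and the product $(\segment ABC)\.OZ$ is exactly the first moment $M$ of the segment about that line. Thus the theorem is equivalent to
\begin{equation*}
(\segment ABC)\.OZ = \tfrac{2}{3}\,DP\.(\tri ABC),
\end{equation*}
and it suffices to evaluate both sides.

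The right-hand side is immediate from elementary circle geometry. Because $B$, $D$, $P$ lie on a diameter and $AD\perp BP$ with foot~$D$, the right angle $\angle BAP$ inscribed in the semicircle on $BP$ makes $AD$ the altitude on the hypotenuse of $\tri BAP$, giving the mean proportional $AD^2 = BD\.DP$ (a companion of the relation used in Lemma~\ref{lm:Heron-Huygens-1}). Therefore
\begin{equation*}
\tfrac{2}{3}\,DP\.(\tri ABC) = \tfrac{2}{3}\,DP\.AD\.BD
= \tfrac{2}{3}\,AD\.(BD\.DP) = \tfrac{2}{3}\,AD^3 .
\end{equation*}

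It remains to show that the moment $M$ of the segment equals $\tfrac{2}{3}AD^3$; this is the only genuinely infinitesimal step, and I expect it to be the crux. I would slice the segment into chords parallel to the base: at height $y$ above~$O$ the chord has half-length $w$ with $y^2 + w^2 = r^2$, so that $y\,dy = -w\,dw$ and the strip contributes moment $y\.(2w\,dy)$. As $y$ runs from the chord up to~$B$, the half-width $w$ decreases from $AD$ to~$0$, whence
\begin{equation*}
M = \int y\.(2w)\,dy = 2\int_0^{AD} w^2\,dw = \tfrac{2}{3}\,AD^3 .
\end{equation*}
In a synthetic, Archimedean presentation the same evaluation is carried out by exhaustion, comparing the segment against the figure bounded by the parabola $w\mapsto w^2$ --- exactly the kind of parabolic comparison that animates the whole treatise --- and either route delivers $M = \tfrac{2}{3}AD^3$.

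Combining the two evaluations gives $(\segment ABC)\.OZ = M = \tfrac{2}{3}DP\.(\tri ABC)$, which is the asserted proportion $(\segment ABC):(\tri ABC) = \tfrac{2}{3}DP : OZ$. The main obstacle is the moment computation $M = \tfrac{2}{3}AD^3$; once it is in hand, the remainder is pure bookkeeping with the single relation $AD^2 = BD\.DP$.
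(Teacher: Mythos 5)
Your proposal is correct, but it takes a genuinely different route from the paper. The paper follows Huygens' own synthetic argument: it constructs an auxiliary triangle $\tri KOH$ with apex at the center~$O$, base $KH = AC$ drawn through the point $G$ on $OP$ determined by the mean proportional $OG^2 = BD \cdot PD$, and then invokes the \emph{Grossehilfsatz} (Proposition~\ref{pr:Huygens-Theoremata}, Theorem~V of \emph{Theoremata}, proved by a long Archimedean exhaustion argument in Appendix~\ref{app:helpful}): the segment and this triangle balance at~$O$. Combining that balancing with Archimedes' median theorem ($OM = \tfrac{2}{3}OG$) and the mean-proportional relation yields the stated ratio by the law of the lever. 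Your proof collapses all of this into the single moment identity $(\segment ABC)\cdot OZ = \tfrac{2}{3}AD^3$, which you verify by direct integration; note that this identity \emph{is} the content of the Grossehilfsatz, since the auxiliary triangle has base $2AD$, height $OG = AD$ (your relation $AD^2 = BD\cdot DP$ is exactly the paper's $OG^2 = BD \cdot PD$), hence moment $AD^2 \cdot \tfrac{2}{3}AD = \tfrac{2}{3}AD^3$ about~$O$. So what you have really done is replace the exhaustion proof of the Grossehilfsatz by a calculus computation --- which is precisely what the paper attributes to Hofmann, and deliberately avoids, since its stated purpose is to resurrect Huygens' purely geometric proofs. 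Your route buys brevity and self-containedness (no appendix needed); the paper's route buys fidelity to the Archimedean method, with the exhaustion-by-parallelograms argument playing the role of your integral. One small point of care you handled correctly: the reduction of the ratio statement to a moment identity requires that the segment lie entirely on one side of the line through $O$ parallel to the chord, which is where the hypothesis that the segment is less than a semicircle enters.
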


\begin{figure}[htb]
\centering
\begin{tikzpicture}[rotate=-30]
\coordinate (O) at (0,0) ;
\coordinate (A) at (30:2cm) ;
\coordinate (B) at (90:2cm) ;
\coordinate (C) at (150:2cm) ;
\coordinate (D) at ($ (A)!0.5!(C) $) ;
\coordinate (P) at (-90:2cm) ;
\coordinate (G) at (0,-1.732) ;
\coordinate (H) at (-1.732,-1.732) ;
\coordinate (K) at (1.732,-1.732) ;
\coordinate (L) at ($ (B)!0.54!(D) $) ;  
\coordinate (M) at (0,-1.155) ;
\draw (O) circle(2cm) ; 
\draw (A) node[right] {$A$} -- (B) node[above right] {$B$}
    -- (C) node[above left] {$C$} -- cycle ;
\draw (K) node[right] {$K$} -- (O) node[right] {$O$}
    -- (H) node[above left] {$H$} -- cycle ;
\draw[dashed] (B) -- (L) node[left] {$L$} 
    -- (D) node[below left=-2pt] {$D$}  -- (M) node[right] {$M$}
    -- (G) node[above=2pt] {$G$} -- (P) node[below left] {$P$} ;
\draw[gray] (D) ++(0:2mm) -- ++(90:2mm) -- ++ (180:2mm) ;
\draw[gray] (G) ++(0:2mm) -- ++(90:2mm) -- ++ (180:2mm) ;
\foreach \pt in {A,B,C,D,G,H,K,P,O}  \draw (\pt) node{$\bull$} ;
\foreach \mk in {L,M}  \draw[red] (\mk) node{$\bull$} ;
\end{tikzpicture}
\caption{Balancing a circular segment with an isosceles triangle}
\label{fg:Huygens-Theoremata} 
\end{figure}

\begin{proof}
Let $ABC$ and $\tri ABC$ be the given segment and triangle. Let $BD$
be the diameter of the segment and prolong it to the center $O$ of the
circle. Let $PD$ be the diameter of the remaining segment. Let $L$ be
the barycenter of $\segment{ABC}$. The theorem may then be restated
as:
\begin{equation}
\label{eq:balancing-act} 
\frac{(\segment{ABC})}{(\tri ABC)}
= \frac{2}{3}\, \frac{PD}{OL} \,.
\end{equation}

To prove \eqref{eq:balancing-act}, locate $G$ on the diameter $BP$ of the
circle between $O$ and $P$ such that
\begin{equation}
OG^2 = BD \. PD
\label{eq:counter-weight} 
\end{equation}
Draw the line $KH$ through $G$ parallel to the base $AC$ of the first
segment, such that $KH = AC$ and $G$ is the midpoint of~$KH$. Let $M$
be the barycenter of the triangle $\tri KOH$.

We now need the following auxiliary proposition, also taken from
\emph{Theoremata} \cite{Huygens1651}. We defer its long (and very
Archimedean) proof to Appendix~\ref{app:helpful}.

\begin{prop}[Theorem V of \emph{Theoremata}] 
\label{pr:Huygens-Theoremata}
The barycenter of the figure that combines the segment $ABC$ of the
circle with the triangle $\tri KOH$ just described lies at the centre
of the circle.
\qned
\end{prop}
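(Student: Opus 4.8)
The plan is to reduce the claim to a single moment computation. Since the barycenter of a disjoint union of two regions is the area-weighted average of their barycenters, the combined barycenter of $\segment ABC$ and $\tri KOH$ coincides with $O$ exactly when the two pieces balance about~$O$, i.e.\ when their signed first moments about $O$ sum to zero. Both pieces are symmetric about the line $BP$, so their individual barycenters — and the combined one — lie on $BP$; it therefore suffices to show that the moment of the segment about the line through $O$ perpendicular to $BP$, with the segment on the $B$ side, has the same magnitude as that of the triangle $\tri KOH$, which lies on the opposite ($P$) side.

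First I would dispose of the triangle, whose moment is elementary. Place $O$ at the origin with $BP$ vertical, let $R$ be the radius, and let the chord $AC$ sit at signed height $OD$ above~$O$. Then $BD\.PD=(R-OD)(R+OD)=R^2-OD^2$, so the defining relation $OG^2=BD\.PD$ forces $OG=\sqrt{R^2-OD^2}=AD$; hence $KH=AC=2\,OG$ and $\tri KOH$ is right isosceles with apex~$O$. Its area is $\half\,AC\.OG$ and its barycenter lies at distance $\twothirds OG$ from $O$, so its moment about $O$ equals $\third\,AC\.OG^2=\third\,AC\.BD\.PD$.

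The circular segment supplies the crux. Slicing $\segment ABC$ by chords parallel to $AC$, the slice at height $y$ has width $2\sqrt{R^2-y^2}$ for $OD\le y\le R$, so its moment about $O$ is $\int_{OD}^{R}2y\sqrt{R^2-y^2}\,dy$. The substitution $w=\sqrt{R^2-y^2}$ — the half-width of the slice — converts this integral into $\int_{0}^{OG}2w^2\,dw=\twothirds OG^3=\third\,AC\.OG^2$, precisely the triangle's moment. The two moments therefore cancel, and the combined barycenter is~$O$.

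The main obstacle is not this calculation but rendering it in the synthetic, calculus-free style of Huygens and Archimedes, which is what the deferred Appendix~\ref{app:helpful} must accomplish. There the first moment of the circular segment about $O$ has to be pinned down by exhaustion — squeezing the segment between inscribed and circumscribed step-regions and balancing each step against the corresponding slab of $\tri KOH$ on the opposite arm of the lever — and this bookkeeping is the lengthy part. In every version the single relation $OG^2=BD\.PD$ is exactly what makes the segment's and the triangle's moments agree, so it is the geometric heart of the proposition.
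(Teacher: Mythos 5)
Your computation is correct, but it is not the paper's proof: Appendix~\ref{app:helpful} gives Huygens' own synthetic argument, a double \emph{reductio ad absurdum} in the Archimedean exhaustion style, whereas yours is a direct first-moment calculation by integration. The mathematical heart is the same in both, and you identified it correctly: the defining relation $OG^2 = BD \. PD$ (equivalently $OG = AD$, the half-chord) is exactly what makes the segment's moment equal the triangle's. But the decompositions differ genuinely. You slice by chords parallel to $AC$ and compute the segment's total moment $\int_{OD}^{R} 2y\sqrt{R^2-y^2}\,dy = \twothirds OG^3$ via the substitution $w = \sqrt{R^2-y^2}$, comparing it with the triangle's moment read off from its centroid. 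The paper instead covers the union $\segment ABC \cup \tri KOH$ by thin parallelogram strips running \emph{parallel} to the diameter $BE$ -- perpendicular to your slices -- and its key Lemma~\ref{lm:barbells} shows, via the law of the lever and the mean-proportional relations $BD \. DE = CD^2$, $BP \. PE = PR^2$, that each pair of corresponding strips (one over the segment, one over the triangle, sharing a common midline) balances \emph{individually} at a point $Y$ on the diameter $IJ$ through $O$; this local, strip-by-strip balancing is a stronger statement than your equality of two global moments. Since the method has no limits, the passage from the finite covering to the curved figure is then handled by contradiction: if the barycenter $L$ were not $O$, the excess area $\sR$ of the covering is made smaller than a suitable magnitude $\sM$, forcing the barycenter of the excess to sit at a point $N$ on the wrong side of a line having all of $\sR$ on its other side, which is absurd. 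Your closing paragraph anticipates precisely this bookkeeping (though with the slicing direction transposed), so the proposal is a faithful modern rendering: it buys brevity and rigor by today's standards, while the paper's version buys fidelity to Huygens' calculus-free reasoning at the cost of the covering construction and the two-sided reductio.
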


Now, by a famous theorem of Archimedes,%
\footnote{%
Archimedes, \textit{Equilibrium of Planes}, Proposition~I.14
\cite[p.~201]{Heath1912}: the barycenter of a triangle lies at the 
intersection of the medians. (The ratio $2{:}1$ on each median 
follows easily.)}
\begin{equation}
OM = \frac{2}{3} OG.
\label{eq:barycenter-KOH} 
\end{equation}
Thus,
$$
\frac{(\tri KOH)}{(\tri ABC)} = \frac{OG}{BD} = \frac{PD}{OG} \,,
$$
using~\eqref{eq:counter-weight}. Combined with 
\eqref{eq:barycenter-KOH}, this yields
$$
\frac{(\tri KOH)}{(\tri ABC)} = \frac{\frac{2}{3}\,PD}{OM} \,.
$$
However, the aforementioned barycenters are at $M$ and~$L$, so, by
Proposition~\ref{pr:Huygens-Theoremata}, they balance at~$O$.
Therefore,
$$
\frac{(\segment{ABC})}{(\tri KOH)} = \frac{OM}{OL}
$$
and, on multiplying the previous two ratios, \eqref{eq:balancing-act}
follows at once.
\end{proof}

Now we transform this result on the barycenter of a circular segment
into an inequality on its \textit{area}.

\begin{thm}[Theorem XV of the \emph{Inventa}] 
\label{th:Huygens-XV}
The area of a circular segment less than a semicircle has a
\emph{greater} ratio to its maximum inscribed triangle than $4{:}3$,
but \emph{less} than the ratio which $\frac{10}{3}$ of the diameter of
the remaining segment has to the diameter of the circle plus three
times the line which reaches from the center of the circle to the base
of the segment.
\end{thm}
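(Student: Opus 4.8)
The plan is to combine two results already established in the paper and let them do all the work: Theorem~\ref{th:Huygens-Theoremata} (Theorem~VII of \emph{Theoremata}), which rewrites the area ratio in terms of the barycenter, and Theorem~\ref{th:Huygens-XIV} (Theorem~XIV of \emph{Inventa}), which pins down where that barycenter sits. Since the heavy geometric lifting is contained in those two theorems, what remains for Theorem~XV is essentially to parse the verbal statement into algebra and verify the arithmetic.

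First I would fix notation from Figure~\ref{fg:Huygens-Theoremata}: write $r$ for the radius and set $OD =: m$ for the length of the line from the center to the base. Because the segment is less than a semicircle, the points occur along the diameter $BP$ in the order $B, D, O, P$, so the diameter of the segment is $BD = r - m$, the diameter of the remaining segment is $PD = r + m$, and the diameter of the circle is $BP = 2r$. Theorem~\ref{th:Huygens-Theoremata} then supplies the identity
\[
\frac{(\segment ABC)}{(\tri ABC)} = \frac{2}{3}\,\frac{PD}{OL},
\]
where $L$ is the barycenter of the segment. Since $L$ lies on $BD$ while $O$ lies on the far side of $D$, I have $OL = OD + DL = m + DL$, so the whole theorem reduces to a two-sided bound on the single quantity $OL$.

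Next I would invoke Theorem~\ref{th:Huygens-XIV}: with $BL$ the part of $BD$ near the vertex and $LD$ the rest, it asserts $LD < BL < \frac{3}{2}\,LD$. For the \emph{upper} bound the right-hand inequality, together with $BL = BD - LD$, gives $BD < \frac{5}{2}\,LD$, hence $LD > \frac{2}{5}\,BD$ and
\[
OL = m + LD > m + \tfrac{2}{5}(r - m) = \tfrac{1}{5}(2r + 3m) = \tfrac{1}{5}\,(BP + 3\,OD).
\]
Substituting $5\,OL > BP + 3\,OD$ into the displayed identity yields exactly the claimed upper bound $\frac{(\segment ABC)}{(\tri ABC)} < \frac{\frac{10}{3}\,PD}{BP + 3\,OD}$. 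For the \emph{lower} bound the left-hand inequality $BL > LD$ gives $LD < \frac{1}{2}\,BD$, so $OL < \frac{1}{2}\,PD$ and the area ratio exceeds $\frac{4}{3}$; this is of course just Lemma~\ref{lm:Huygens-III} (Theorem~III of \emph{Inventa}) reappearing, and may be cited directly instead.

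I anticipate no genuine mathematical obstacle, since the two deep ingredients are already proved; the only real care needed is bookkeeping. The one place to be vigilant is the sign convention $OL = OD + DL$, which rests on confirming the point order $B, D, O, P$, and the final reduction of the clumsy verbal phrase ``$\frac{10}{3}$ of $PD$ to $BP + 3\,OD$'' to the clean barycentric statement $5\,OL > BP + 3\,OD$. The elegance worth emphasizing is Huygens' move itself: a purely metrical fact about the \emph{location} of the barycenter is converted, through the area--barycenter identity, into a sharp inequality about \emph{areas}.
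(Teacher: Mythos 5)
Your proposal is correct and follows essentially the same route as the paper: both arguments combine the barycentric identity \eqref{eq:balancing-act} of Theorem~\ref{th:Huygens-Theoremata} with the two-sided location of the barycenter from Theorem~\ref{th:Huygens-XIV}, converting bounds on $OL$ into the stated bounds on the area ratio. The only difference is presentational — you work algebraically with $r$ and $m = OD$, whereas the paper introduces the auxiliary points $H$ (with $HD = \tfrac{2}{3}PD$) and $S$ (with $BS = \tfrac{3}{2}SD$, so that $5\,OS = BP + 3\,OD$) to carry out the same computation synthetically.
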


\begin{figure}[htb]
\centering
\begin{tikzpicture}
\coordinate (O) at (0,0) ;
\coordinate (A) at (10:2cm) ;
\coordinate (B) at (90:2cm) ;
\coordinate (C) at (170:2cm) ;
\coordinate (D) at ($ (A)!0.5!(C) $) ;
\coordinate (P) at (-90:2cm) ;
\coordinate (H) at ($ (D)!0.667!(P) $) ;
\coordinate (L) at ($ (B)!0.52!(D) $) ;
\coordinate (S) at ($ (B)!0.6!(D) $) ;
\path[name path=bigOh] (O) circle(2cm) ;
\draw (O) node[below left=-2pt] {$O$} circle (2cm) ;
\draw (A) node[right=2pt] {$A$} -- (B) node[above] {$B$}
   -- (C) node[left=2pt] {$C$} -- cycle ;
\draw (B) -- (L) node[above left=-2pt] {$L$} 
   -- (S) node[right] {$S$} -- (D) node[above=5pt, right] {$D$} ;
\draw[dashed] (D) -- (H) node[right] {$H$} 
   -- (P) node[below=2pt] {$P$} ;
\foreach \pt in {A,B,C,D,H,O,P,S}  \draw (\pt) node{$\bull$} ;
\draw[red] (L) node{$\bull$} ;
\end{tikzpicture}
\caption{Comparing areas of a circular segment
and its inscribed triangle}
\label{fg:Huygens-XV} 
\end{figure}

\begin{proof}
Take a circular segment $ABC$ less than a semicircle in which is
inscribed the maximum triangle $\tri ABC$. Extend the diameter $BD$ of
the segment be to a diameter $BP$ of the circle, passing through its
center~$O$. (See Figure~\ref{fg:Huygens-XV}.)

We must first prove that
$$
\boxed{\frac{(\segment ABC)}{(\tri ABC)} > \frac{4}{3}}\,.
$$

Again let $L$ be the barycenter of the \textit{segment} $ABC$. Using
Theorem~\ref{th:Huygens-XIV}, we obtain
$$
BD < 2\,BL \word{and so} PD = PB - BD > 2\,OB - 2\,BL = 2\,OL,
$$
and since $BP = 2\,OB$, we deduce that
$$
\frac{BP}{PD} < \frac{OB}{OL}\,, \word{whereby} 
\frac{PD}{OL} > \frac{BP}{OB} = 2, 
$$
and therefore $PD > 2\,OL$.

Now let $PD$ be cut at~$H$ such that $HD = 2\,HP$. Then, since
$HD = \twothirds PD$, it follows that
$$
HD > \frac{4}{3}\,OL,
$$
and also, by \eqref{eq:balancing-act},
$$
\frac{HD}{OL} = \frac{(\segment ABC)}{(\tri ABC)}\,,
\word{whereby} \frac{(\segment ABC)}{(\tri ABC)} > \frac{4}{3}
$$
as required.

\medskip

Secondly, we must prove that
\begin{equation}
\boxed{\frac{(\segment ABC)}{(\tri ABC)}
< \frac{10}{3} \. \frac{PD}{BP + 3\,OD}}\,.
\label{eq:Huygens-XV} 
\end{equation}

Choose $S$ on the diameter $BD$ of the segment, as in the proof of
Theorem~\ref{th:Huygens-XIV}, so that
$$
BS = \frac{3}{2}\,SD.  
$$
By Theorem~\ref{th:Huygens-XIV}, $S$ falls between $L$ and $D$ since
$L$ is the barycenter of $\segment ABC$. This means that $OL > OS$.
Then, since
$$
\frac{HD}{OL} = \frac{(\segment ABC)}{(\tri ABC)}  \word{and}
OL > OS \implies \frac{HD}{OL} < \frac{HD}{OS} \,,
$$
it follows that
$$
\frac{(\segment ABC)}{(\tri ABC)} < \frac{HD}{OS} \,.
$$
Now, since $OS = OD + DS = OD + \frac{2}{5}\,BD$, so that
$$
5\,OS = 2\,BD + 5\,OD = 2\,OB + 3\,OD = BP + 3\,OD,
$$
we obtain the desired estimate \eqref{eq:Huygens-XV}:
$$
\frac{(\segment ABC)}{(\tri ABC)} < \frac{5\,HD}{5\,OS}
= \frac{10}{3}\, \frac{PD}{5\,OS}
= \frac{10}{3}\, \frac{PD}{BP + 3OD} \,.
\eqno \qed
$$
\hideqed
\end{proof}

Now we are ready to apply the previous inequality on \textit{areas} to
obtain an inequality on a \textit{circular arc}.

\begin{thm}[Theorem XVI of \emph{Inventa}] 
\label{th:Huygens-XVI}
Any \emph{arc} less than a semicircumference is \emph{greater} than
its chord plus one-third of the difference between that chord and its
sine. But it is \emph{less} than the chord plus the line which has the
same ratio to the aforementioned one third that four times the chord
plus the sine has to twice the chord plus three times the sine.
\end{thm}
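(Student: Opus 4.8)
The plan is to convert the \emph{area} statements already in hand into \emph{arc-length} statements, exactly as Theorem~\ref{th:Huygens-VII} was extracted from the Heron--Huygens ratio $\frac43$. The bridge is the sector decomposition
$$(\sector OAC) = (\segment ABC) + (\tri OAC),$$
valid here because the segment is less than a semicircle, together with a short dictionary relating the three areas to the radius $r$, the chord $AC$, the \emph{sine} $c$ of the arc, and the arc length $\arc{ABC}$ itself. Taking $OC$ as the base of $\tri OAC$, the altitude from $A$ is by definition the sine $c$, so $(\tri OAC) = \thalf r c$; the usual sector area gives $(\sector OAC) = \thalf r\cdot\arc{ABC}$; and since $D$ lies between $O$ and $B$ (again because the arc is less than a semicircle) we have $BD = OB - OD = r - OD$, whence
$$(\tri ABC) = \thalf AC\cdot BD = \thalf r\cdot AC - \thalf AC\cdot OD = \thalf r(AC - c),$$
using $(\tri OAC) = \thalf AC\cdot OD = \thalf r c$, i.e.\ $AC\cdot OD = r c$. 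This last identity also records $OD/r = c/AC$, which I will need at the end to eliminate~$r$.

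The lower bound requires no new work: it is verbatim the inequality~\eqref{eq:Snell-any-arc} proved inside Theorem~\ref{th:Huygens-VII}, namely $\arc{ABC} > AC + \third(AC - c)$, now read off for the arc $\arc{ABC}$ with chord $AC$ and sine $c$. I would simply cite it.

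For the upper bound I would feed Theorem~\ref{th:Huygens-XV} into the sector decomposition. That theorem gives $(\segment ABC) < \frac{10}{3}\,\dfrac{PD}{BP+3\,OD}\,(\tri ABC)$, so
$$\thalf r\cdot\arc{ABC} = (\sector OAC) < \frac{10}{3}\,\frac{PD}{BP+3\,OD}\,(\tri ABC) + (\tri OAC).$$
Substituting the dictionary values and cancelling $\frac{r}{2}$ turns this into $\arc{ABC} < c + \frac{10}{3}\,\frac{PD}{BP+3\,OD}\,(AC - c)$, and hence $\arc{ABC} - AC < (AC - c)\bigl[\tfrac{10}{3}\tfrac{PD}{BP+3OD} - 1\bigr]$. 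The whole matter now reduces to showing that the bracket equals $\dfrac{4\,AC + c}{3(2\,AC + 3c)}$, the very ratio named in the statement. Using $PD = OP + OD = r + OD$ and $BP = 2r$, the bracket is $\dfrac{10\,PD - 3\,BP - 9\,OD}{3(BP+3OD)} = \dfrac{4r + OD}{3(2r + 3\,OD)}$; dividing top and bottom by $r$ and inserting $OD/r = c/AC$ collapses it to $\dfrac{4\,AC + c}{3(2\,AC + 3c)}$. This yields $\arc{ABC} < AC + \ell$ with $\ell : \third(AC-c) = (4\,AC + c) : (2\,AC + 3c)$, which is precisely the claim.

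I expect the only real obstacle to be bookkeeping rather than genuine difficulty. One must track the configuration hypotheses carefully — that $D$ lies between $O$ and $B$, and that $O$ lies between $P$ and $D$, both guaranteed by the arc being less than a semicircle — so that the signed length relations $BD = r - OD$ and $PD = r + OD$ hold as written; and then verify the single algebraic collapse of the bracket. Everything else is a direct transcription of Theorems~\ref{th:Huygens-VII} and~\ref{th:Huygens-XV}.
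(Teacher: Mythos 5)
Your proposal is correct and follows essentially the same route as the paper: the lower bound is quoted verbatim from Theorem~\ref{th:Huygens-VII} (formula~\eqref{eq:Snell-any-arc}), and the upper bound is obtained by feeding Theorem~\ref{th:Huygens-XV} into the decomposition $(\sector OAC) = (\segment ABC) + (\tri OAC)$ together with the similar-triangles relation $OD/OC = AM/AC$, which is exactly the paper's skeleton. The only difference is presentational: the paper carries out the same algebra synthetically, via Huygens' auxiliary triangles $GHL$, $IHL$, $QHL$ with common apex and altitude equal to the radius (so the final comparison reads as base $GQ > \arc{AC}$), whereas you transcribe it directly into symbols and collapse the bracket $\frac{10}{3}\frac{PD}{BP+3\,OD}-1$ to $\frac{4\,AC+c}{3(2\,AC+3c)}$.
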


\begin{figure}[htb]
\centering
\begin{tikzpicture}
\coordinate (O) at (0,0) ;
\coordinate (A) at (0:2cm) ;
\coordinate (B) at (60:2cm) ;
\coordinate (C) at (120:2cm) ;
\coordinate (D) at ($ (A)!0.5!(C) $) ;
\coordinate (P) at (-120:2cm) ;
\coordinate (R) at (-60:2cm) ;
\coordinate (M) at ($ (C)!(A)!(R) $) ;
\begin{scope}[xshift=6cm, rotate=30]
\coordinate (G) at (-1.732,0) ;
\coordinate (H) at (0,0) ;
\coordinate (I) at (1.732,0) ;
\coordinate (J) at (0.5,0) ;
\coordinate (K) at (2.25,0) ; 
\coordinate (L) at (0.5,2) ; 
\coordinate (Q) at (2.474,0) ;
\end{scope}
\draw (O) circle(2cm) ; 
\draw[thick] (A) arc(0:120:2cm) ; 
\draw (O) node[left] {$O$} -- (A) node[right] {$A$} 
   -- (C) node[above left] {$C$} -- (R) node[below right] {$R$} ;
\draw (P) node[below left] {$P$} -- (D) node[right=2pt] {$D$}
   -- (B) node[above right] {$B$} ;
\draw (C) -- (B) -- (A) -- (M) node[below left] {$M$} ;
\draw (G) node[below right=-2pt] {$G$} 
   -- (H) node[below right=-2pt] {$H$}
   -- (I) node[below right=-2pt] {$I$} 
   -- (K) node[below right=-2pt] {$K$} -- (Q) node[right] {$Q$} ;
\draw (G) -- (L) node[above left] {$L$} -- (H) 
      (I) -- (L) -- (Q) ;
\draw[dashed] (L) -- (J) ;
\draw[gray] (M) ++(30:2mm) -- ++(120:2mm) -- ++ (-150:2mm) ;
\draw[gray] (D) ++(-120:2mm) -- ++(150:2mm) -- ++ (60:2mm) ;
\draw[gray] (J) ++(30:2mm) -- ++(120:2mm) -- ++ (-150:2mm) ;
\foreach \pt in {A,C,P,B,M,D,O,R}  \draw (\pt) node{$\bull$} ;
\foreach \mk in {G,H,I,K,L,Q}  \draw (\mk) node{$\sst\circ$} ;
\end{tikzpicture}
\caption{Estimating the arc length of a circular arc}
\label{fg:Huygens-XVI} 
\end{figure}

\begin{proof}
Take a circle with center $O$ and on it an arc $\arc{AC}$, less than a
semicircle. The sine of the arc $\arc{AC}$ is the perpendicular $AM$
from $A$ to the diameter $CR$. Let $B$ be the midpoint of the arc
$\arc{AC}$; the diameter $PB$ of the circle bisects the chord $AC$
(perpendicularly) at~$D$. (See Figure~\ref{fg:Huygens-XVI}.)

We already showed in the proof of Theorem~\ref{th:Huygens-VII}
(\emph{VII of Inventa}), see formula~\eqref{eq:Snell-any-arc}, that:
\begin{equation}
\arc{AC} > AC + \frac{1}{3}(AC - AM).
\label{eq:Huygens-XVI-first} 
\end{equation}
This establishes the first statement of the theorem.

\medskip

Now take another line $GH$, produced successively to points $I$, $K$
and~$Q$, such that:
\begin{itemize}
\item
$GH = AM$ and $GI = AC$, so that $AC - AM = GI - GH = HI$.
\item
$GK = GI + IK$, where $IK = \third HI$.
\item
$GQ = GI + IQ$, where by construction
$\dfrac{IQ}{IK} := \dfrac{4\,GI + GH}{2\,GI + 3\,GH}
= \dfrac{4\,AC + AM}{2\,AC + 3\,AM}$\,.
\end{itemize}

The sum relation $GQ = GI + IQ$ follows because $IQ > IK$, which in turn
comes from $4\,AC + AM > 2\,AC + 3\,AM$ since $AC > AM$. The 
placement of~$K$ also shows that 
$$
GK = AC + \frac{1}{3}(AC - AM).
$$
It follows from \eqref{eq:Huygens-XVI-first} that $\arc{AC} > GK$.

The second statement of the theorem can now be expressed as:
\begin{equation}
\arc{AC} < GQ 
= AC + \frac{1}{3}(AC - AM) \. \frac{4AC + AM}{2AC + 3AM}\,.
\label{eq:Huygens-XVI-second} 
\end{equation}

In order to prove \eqref{eq:Huygens-XVI-second}, construct the
triangles with common vertex $L$, common altitude equal to the radius
$OB$ of the circle, and respective bases $GH$, $HI$, and~$IQ$. Join
$OA$, $OC$, $AB$ and~$BC$ (see Figure~\ref{fg:Huygens-XVI} again).

Then, since
$$
\frac{IQ}{IK} = \frac{4\,GI + GH}{2\,GI + 3\,GH}
\implies \frac{IQ}{IH} = \frac{IQ}{3\,IK}
= \frac{4\,HI + GH}{6\,GI + 9\,GH}\,,
$$
it follows that
$$
\frac{QH}{IH} = \frac{QI + IH}{IH}
= \frac{10\,GI + 10\,GH}{6\,GI + 9\,GH}
= \frac{10}{3} \. \frac{GI + GH}{2\,GI + 3\,GH}
= \frac{10}{3} \. \frac{AC + AM}{2\,AC + 3\,AM}\,.
$$

Moreover, from the similar triangles
$\tri CAM \sim \tri COD$ we obtain
$$
\frac{AC}{AM} = \frac{OC}{OD}\,.
$$
Therefore,
$$
\frac{QH}{IH} = \frac{10}{3} \. \frac{OC + OD}{2\,OC + 3\,OD}
= \frac{10}{3} \. \frac{PD}{PB + 3\,OD}\,.
$$
Hence, by \eqref{eq:Huygens-XV} in the proof of
Theorem~\ref{th:Huygens-XV} (XV of \emph{Inventa}),
$$
\frac{(\segment ABC)}{(\tri ABC)} < \frac{QH}{IH}
= \frac{(\tri QHL)}{(\tri IHL)}\,.
$$

We now assert the equality of areas $(\tri IHL) = (\tri ABC)$.

Indeed, since the base $GH$ of the triangle $\tri GHL$ equals the
altitude $AM$ of $\tri OAC$ and vice~versa, we conclude that
$(\tri GHL) = (\tri OAC)$.

Moreover, since $GI = AC$ we similarly deduce that
$$
(\tri GIL) = (\tri OAB) + (\tri OBC) = (OABC),
$$
and then $(\tri IHL) = (\tri GIL) - (\tri GHL) 
= (OABC) - (\tri OAC) = (\tri ABC)$, as claimed. Therefore,
$$
\frac{(\segment ABC)}{(\tri ABC)} < \frac{(\tri QHL)}{(\tri ABC)}
\word{which implies}  (\tri QHL) > (\segment ABC).
$$
Moreover,
$$
(\tri QGL) = (\tri QHL) + (\tri GHL) 
> (\segment ABC) + (\tri OAC) = (\sector OAC).
$$

But the altitude of $\tri QGL$ equals the radius $OC$, by
construction. Therefore the \textit{base} $GQ$ of this triangle will
be greater than the \textit{arc} $\arc{AC}$. This establishes
\eqref{eq:Huygens-XVI-second} and proves Theorem~\ref{th:Huygens-XVI}.
\end{proof}

\begin{corl} 
\label{cr:Huygens-XVI}
Combining Theorems \ref{th:Huygens-VII} and~\ref{th:Huygens-XVI}, we
arrive at:
$$
\boxed{C_{2n} + \frac{C_{2n} - C_n}{3} < C < C_{2n}
+ \frac{C_{2n} - C_n}{3} \. \frac{4C_{2n} + C_n}{2C_{2n} + 3C_n}}
$$
where $C_n$ is the perimeter of an inscribed regular polygon of
$n$~sides and $C$ is the circumference of the circle.
\end{corl}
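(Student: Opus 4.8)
The plan is to recognize that the boxed double inequality is simply the single-arc inequalities of Theorems~\ref{th:Huygens-VII} and~\ref{th:Huygens-XVI}, scaled up to the full circle using the regularity of the inscribed polygons. First I would fix one side $AC$ of the inscribed regular $2n$-gon and let $\arc{AC}$ be the arc it subtends (central angle $\pi/n$), with $AM$ its sine in Huygens' sense. Since the $2n$-gon is regular, the $2n$ such arcs are congruent, so summing over all of them amounts to multiplying by $2n$. This produces the dictionary
$$
2n\.AC = C_{2n}, \qquad 2n\.AM = C_n, \qquad 2n\.\arc{AC} = C,
$$
where the middle identity rests on the fact, noted after~\eqref{eq:easy-lower-bound}, that \emph{twice} the sine, $2\,AM$, is exactly a side of the inscribed $n$-gon.

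The left-hand inequality then requires no new work: multiplying the arc estimate~\eqref{eq:Snell-any-arc} by $2n$ and substituting the dictionary reproduces verbatim the conclusion~\eqref{eq:Snell-first} of Theorem~\ref{th:Huygens-VII}.

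For the right-hand inequality I would invoke the second (and deeper) half of Theorem~\ref{th:Huygens-XVI}, namely the estimate~\eqref{eq:Huygens-XVI-second},
$$
\arc{AC} < AC + \third(AC - AM)\.\frac{4\,AC + AM}{2\,AC + 3\,AM},
$$
and again multiply through by $2n$. The additive terms $AC + \third(AC - AM)$ become $C_{2n} + \third(C_{2n} - C_n)$ by the dictionary. The rational factor $\dfrac{4\,AC+AM}{2\,AC+3\,AM}$ is homogeneous of degree zero, so scaling numerator and denominator simultaneously by $2n$ leaves its value unchanged while turning each length into its perimeter, giving $\dfrac{4\,C_{2n}+C_n}{2\,C_{2n}+3\,C_n}$. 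Collecting terms yields the boxed upper bound.

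I do not anticipate a genuine obstacle, because both inequalities are already established at the level of a single arc; the entire task is translation. The only step demanding care is the scaling bookkeeping: one must confirm that it is $2\,AM$ rather than $AM$ that equals a side of the $n$-gon, and that the rational factor is truly scale-invariant, since an oversight there would introduce spurious factors of $2n$ and corrupt the final ratio.
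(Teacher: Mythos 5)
Your proposal is correct and is essentially the paper's own (implicit) argument: the corollary is stated as a direct combination of Theorems~\ref{th:Huygens-VII} and~\ref{th:Huygens-XVI}, with the lower bound being \eqref{eq:Snell-first} verbatim and the upper bound obtained by multiplying \eqref{eq:Huygens-XVI-second} by $2n$ under the dictionary $2n\.AC = C_{2n}$, $2n\.AM = C_n$, $2n\.\arc{AC} = C$. Your care about the factor of two in the sine ($2\,AM$ being the side of the $n$-gon) and the scale-invariance of the rational factor matches exactly the bookkeeping the paper spells out later in the proof of Corollary~\ref{cr:both-bounds}.
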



\subsection{Huygens' barycentric equation} 
\label{ssc:Huygens-barycenter-equation}
 
We return to Huygens' equation \eqref{eq:balancing-act}, restating it in
modern notation. We follow, to some extent, Hofmann's
treatment~\cite{Hofmann1966}.

In a circle of radius~$r$, consider a circular segment $ABC$, of
area~$\Sg < \half\pi r^2$. Choose a Cartesian coordinate system with
origin at the vertex $B$ of the segment and positive $x$-axis along
the diameter~$BD$ (see Figure~\ref{fg:Hofmann-five}). The equation of
the circle is
$$
y^2 = 2rx - x^2.
$$
The base of the segment lies on a line $x = a$ where $a < r$, and the 
endpoints of its arc are
$$
A = (a,\half b) \word{and} C = (a,-\half b), 
\word{where} \quarter b^2 = a(2r - a). 
$$
Let $L = (\xi,0)$ be the barycenter of the segment $ABC$ which, by
symmetry, lies on the diameter with $\xi < a$. The area of the
inscribed triangle may be denoted by
\begin{equation}
\dl := (\tri ABC) = \frac{ab}{2}\,.
\label{eq:inscribed-area} 
\end{equation}

\begin{figure}[htb]
\centering
\begin{tikzpicture}[scale=1.2, rotate=-90]
\coordinate (O) at (0,0) ;
\coordinate (A) at (110:2cm) ;
\coordinate (B) at (180:2cm) ;
\draw (B) ++(0,2) coordinate (Bf) ;
\coordinate (C) at (250:2cm) ;
\coordinate (D) at ($ (A)!0.5!(C) $) ;
\coordinate (G) at ($ (B)!0.5!(D) $) ;
\draw (G) +(0,2) coordinate (Gh) +(0,-2) coordinate (Gk) ;
\path[name path=ABarc] (A) arc(110:250:2cm) ; 
\path[name path=Gplus] (G) -- (Gh) ; 
\path[name path=Gminus] (G) -- (Gk) ; 
\path[name intersections={of=ABarc and Gplus, by={H}}] ;
\path[name intersections={of=ABarc and Gminus, by={K}}] ;
\coordinate (E) at ($ (A)!(H)!(D) $) ; 
\coordinate (F) at ($ (B)!(H)!(Bf) $) ; 
\coordinate (I) at ($ (B)!(K)!(Bf) $) ; 
\coordinate (J) at ($ (C)!(K)!(D) $) ; 
\coordinate (L) at ($ (B)!0.6!(D) $) ;  
\fill[blue!20] (A) arc(110:132:2cm) -- (E) -- cycle ;
\fill[blue!20] (C) arc(250:229:2cm) -- (J) -- cycle ;
\fill[gray!30] (B) arc(180:132:2cm) -- (F) -- cycle ;
\fill[gray!30] (B) arc(180:229:2cm) -- (I) -- cycle ;
\draw (A) arc(110:250:2cm) -- cycle ; 
\draw (E) node[below] {$E$} -- (H) node[above right] {$H$}
   -- (F) node[above right] {$F$} -- (B) node[above] {$B$}
   -- (I) node[above left] {$I$} --  (K) node[above left] {$K$}
   -- (J) node[below] {$J$} ;
\draw (H) -- (G) node[above left] {$G$} -- (K) ;
\draw (B) -- (L) node[below right=-2pt] {$L$} 
   -- (D) node[below] {$D$} ;
\draw[dashed] (A) node[below right] {$A$} -- (B) 
   -- (C) node[below left] {$C$} ;
\foreach \pt in {A,B,C,D,E,F,G,H,I,J,K}
   \draw (\pt) node {$\bull$} ;
\draw (L) node[red] {$\bull$};
\end{tikzpicture}
\caption{Estimating the segment's barycenter}
\label{fg:Hofmann-five} 
\end{figure}

\begin{lema}[Hofmann] 
\label{lm:Hofmann}
\begin{equation}
\xi > \frac{a}{2}\,.
\label{eq:barycenter-below-halfway} 
\end{equation}
\end{lema}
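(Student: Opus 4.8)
The plan is to compute $\xi$ directly as a ratio of integrals over the segment and then reduce the inequality $\xi > a/2$ to a single monotonicity statement about the half-height of the circle. Writing the segment as $\{(x,y) : 0 \le x \le a,\ |y| \le h(x)\}$, where $h(x) := \sqrt{2rx - x^2}$, I would express the area and the first moment about the $y$-axis as $\Sg = \int_0^a 2h(x)\,dx$ and $\int_0^a 2x\,h(x)\,dx$, so that
\[
\xi = \frac{\int_0^a x\,h(x)\,dx}{\int_0^a h(x)\,dx}\,.
\]
Hence the assertion $\xi > \tfrac{a}{2}$ is equivalent to
\[
\int_0^a \Bigl( x - \tfrac{a}{2} \Bigr) h(x)\,dx > 0\,.
\]

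The key observation, which does all the work, is that $h$ is \emph{strictly increasing} on $[0,a]$. Indeed, writing $h(x) = \sqrt{r^2 - (r - x)^2}$ gives $h'(x) = (r - x)/h(x)$, and this is positive throughout $[0,a]$ precisely because the segment is \textbf{less than a semicircle}, i.e.\ $a < r$, so that $r - x > 0$ on the whole range of integration. With monotonicity in hand I would symmetrize about the midpoint $\tfrac{a}{2}$: the substitution $x = \tfrac{a}{2} + u$ followed by folding the interval $[-\tfrac{a}{2}, \tfrac{a}{2}]$ about the origin yields
\[
\int_0^a \Bigl( x - \tfrac{a}{2} \Bigr) h(x)\,dx
= \int_0^{a/2} u\,\Bigl[ h\bigl(\tfrac{a}{2} + u\bigr) - h\bigl(\tfrac{a}{2} - u\bigr) \Bigr]\,du\,.
\]
Because $h$ is strictly increasing and $\tfrac{a}{2} + u > \tfrac{a}{2} - u$ for $u \in (0, \tfrac{a}{2}]$ (with both arguments lying in $[0,a]$), the bracket is positive, the integrand is positive, and the integral is strictly positive. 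This gives $\xi > \tfrac{a}{2}$.

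The main obstacle -- really the only substantive point -- is recognizing that the hypothesis ``less than a semicircle'' is exactly what guarantees $a < r$ and hence forces $h$ to be monotone increasing across the entire range of integration; once that is secured, the symmetric-pairing (Chebyshev-type correlation) argument is routine. I also note that this lemma is nothing but the analytic restatement of Step~1 of Theorem~\ref{th:Huygens-XIV}, where $E = (\tfrac{a}{2}, 0)$ is the midpoint of the diameter and ``the barycenter lies below $E$, viewed from the vertex'' means precisely $\xi > \tfrac{a}{2}$; so an alternative would be to simply invoke that synthetic result. Following Hofmann, however, the self-contained integral computation above is preferable.
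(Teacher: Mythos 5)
Your proof is correct, and it takes a genuinely different route from the paper's. The paper follows Hofmann's synthetic argument: frame the segment with the rectangle $EFIJ$ of Figure~\ref{fg:Hofmann-five}, whose midline is the chord $HK$ through the midpoint $G = (\half a, 0)$ of the diameter and whose barycenter is therefore~$G$; to convert the rectangle into the segment one must \emph{remove} the regions $BFH$ and $BIK$ lying above the midline and \emph{add} the regions $HAE$ and $KCJ$ lying below it, and both operations displace the barycenter toward the base, whence $\half a < \xi < a$. You instead compute $\xi$ as a ratio of moments, reduce \eqref{eq:barycenter-below-halfway} to the positivity of $\int_0^a \bigl(x - \half a\bigr) h(x)\,dx$, and settle that by folding the interval about $x = \half a$ and invoking the strict monotonicity of the half-width $h(x) = \sqrt{2rx - x^2}$ on $[0,a]$ --- which is exactly where the hypothesis $a < r$ (segment less than a semicircle) enters, just as it does implicitly in the paper's cut-and-paste step. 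The two arguments rest on the same geometric mechanism (cross-sections widen toward the base, so mass concentrates on the far side of the midline), but yours is self-contained, quantitative, and generalizes verbatim to any region $\{\, (x,y) : 0 \leq x \leq a,\ |y| \leq h(x) \,\}$ with $h$ increasing; the paper's version deliberately stays in the synthetic style of Huygens himself --- as you correctly note, it is essentially a rerun of Step~1 of Theorem~\ref{th:Huygens-XIV} --- which is the point of Hofmann's (and the authors') exposition.
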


\begin{proof}
In the circular segment $ABC$ with base $AC$ and diameter $BD$, let
$G = (\half a,0)$ be the midpoint of $BD$ and draw the chord $HK$
through~$G$ parallel to~$AD$. Then $HK$ is the midline of a rectangle
$EFIJ$ whose base $EI$ is part of the base $AC$ of the segment, and
whose opposite side $FJ$ is tangent to the circle at~$B$.

The barycenter of this rectangle is~$G$. If we \textit{remove} from
the rectangle the regions $BFH$ and $BIK$ outside the circular
segment, and \textit{add} to the rectangle the regions $HAE$ and $KCJ$
inside the segment, we recover the segment $ABC$. Both of these moves
displace the barycenter from the midline $HK$ towards the base $AC$ of
the segment (i.e., downwards, in Figure~\ref{fg:Hofmann-five}). Thus
$L$ which, by symmetry, lies on the diameter $BD$, also lies below the
chord $HK$, i.e., nearer the base $AC$. In other words, $L = (\xi,0)$
lies on $BD$ between $G = (\half a,0)$ and~$D = (a,0)$. In brief,
$\half a < \xi < a$.
\end{proof}

We now restate Huygens' equation \eqref{eq:balancing-act}.

\begin{thm}[Huygens' barycentric equation] 
\label{th:barycentric-eqn}
\begin{equation}
\boxed{\frac{\Sg}{\dl} 
= \frac{2}{3} \. \frac{2r - a}{r - \xi}} \,.
\label{eq:barycentric-eqn} 
\end{equation}
\end{thm}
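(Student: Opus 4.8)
The plan is to read \eqref{eq:barycentric-eqn} as a mere coordinate transcription of Huygens' equation \eqref{eq:balancing-act}, already established in Theorem~\ref{th:Huygens-Theoremata}. First I would build the dictionary between the synthetic quantities in \eqref{eq:balancing-act} and the analytic ones fixed by Figure~\ref{fg:Hofmann-five}. With $B$ at the origin and the diameter $BP$ along the positive $x$-axis, the circle $y^2 = 2rx - x^2$ has center $O = (r,0)$ and antipode $P = (2r,0)$; the base chord lies on $x = a$, so $D = (a,0)$ and the barycenter is $L = (\xi,0)$. Hence the diameter of the opposite segment is $PD = 2r - a$, and the distance from the center to the barycenter is $OL = r - \xi$. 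Substituting these into $\dfrac{(\segment ABC)}{(\tri ABC)} = \dfrac{2}{3}\,\dfrac{PD}{OL}$ turns it verbatim into $\dfrac{\Sg}{\dl} = \dfrac{2}{3}\,\dfrac{2r-a}{r-\xi}$, which is exactly the assertion.

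Because \eqref{eq:balancing-act} does all the work, there is no deep obstacle here; the single point that demands care --- and the only one I would flag --- is the sign in $OL = r - \xi$. This reading is legitimate precisely because $\xi < r$, so that $L$ lies strictly between $B$ and the center~$O$ and the \emph{distance} $OL$ equals $r-\xi$ rather than its negative. That inequality follows from Lemma~\ref{lm:Hofmann}, which confines $\xi$ to the interval $(\tfrac{a}{2},a)$, together with the standing hypothesis $a < r$ (the segment being less than a semicircle). Everything else is transcription.

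Since Theorem~\ref{th:Huygens-Theoremata} is proved synthetically, I would also run an independent analytic check, which is short and self-contained. Writing $\Sg = \int_0^a 2\sqrt{2rx-x^2}\,dx$ and $\Sg\,\xi = \int_0^a 2x\sqrt{2rx-x^2}\,dx$ for the area and the first moment about the line through~$B$, one gets
$$
\Sg\,(r-\xi) = \int_0^a 2(r-x)\sqrt{2rx - x^2}\,dx.
$$
The substitution $u = x - r$ reduces the radicand to $r^2 - u^2$ and the integrand to $-2u\sqrt{r^2-u^2}$, with antiderivative $\tfrac{2}{3}(r^2-u^2)^{3/2}$; evaluating between $u = -r$ and $u = a-r$ gives
$$
\Sg\,(r-\xi) = \tfrac{2}{3}\,\bigl[ a(2r-a) \bigr]^{3/2}.
$$
The pivotal step is the circle relation $\quarter b^2 = a(2r-a)$, i.e.\ the statement that $A$ lies on the circle: it collapses the bracket to $\tfrac{1}{8}b^3$, so that $\Sg\,(r-\xi) = \tfrac{1}{12}b^3$. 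Dividing by $\dl = \tfrac12 ab$ from \eqref{eq:inscribed-area} and using the same relation once more yields $\dfrac{\Sg}{\dl}\,(r-\xi) = \tfrac{2}{3}(2r-a)$, which rearranges to \eqref{eq:barycentric-eqn}. I expect the whole verification to hinge on this single identity $\quarter b^2 = a(2r-a)$, the analytic shadow of the similar-triangle proportions Huygens manipulates by hand.
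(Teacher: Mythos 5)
Your proposal is correct, and its first half coincides with the paper's own treatment: the paper gives Theorem~\ref{th:barycentric-eqn} no proof of its own, introducing it only with ``We now restate Huygens' equation \eqref{eq:balancing-act}'', so the entire content is exactly the dictionary you write down ($PD = 2r - a$, $OL = r - \xi$, legitimate since $\xi < a < r$ in the coordinate setup). Where you genuinely depart from the paper is your second half: the computation
\[
\Sg\,(r - \xi) = \int_0^a 2(r - x)\sqrt{2rx - x^2}\,dx
= \tfrac{2}{3}\bigl[a(2r - a)\bigr]^{3/2} = \tfrac{1}{12}\,b^3,
\]
followed by division by $\dl = \tfrac{1}{2}ab$, is a correct, self-contained analytic proof of \eqref{eq:barycentric-eqn} that bypasses the synthetic chain on which the paper's version rests --- Theorem~\ref{th:Huygens-Theoremata}, hence Proposition~\ref{pr:Huygens-Theoremata}, whose long Archimedean compression argument fills Appendix~\ref{app:helpful}. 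The trade-off is clear: the paper's route keeps the result inside Huygens' classical toolkit, which is its historical point (compare subsection~\ref{ssc:barycenter-location}, where the modern formula \eqref{eq:modern-location} is deliberately derived \emph{from} the synthetic proposition by the law of the lever, rather than by integration), whereas your calculus check buys logical independence from the appendix and a quick modern confirmation. The details of your verification --- the substitution $u = x - r$ with antiderivative $\tfrac{2}{3}(r^2 - u^2)^{3/2}$, and the two invocations of the circle relation $\tfrac{1}{4}b^2 = a(2r - a)$ --- are all accurate.
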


This is a very powerful result. For example, it leads quite rapidly to
a \textit{new proof} of the inequality~\eqref{eq:HH-first}, i.e., of
Lemma~\ref{lm:Huygens-III} (Theorem III of \emph{Inventa}). Indeed, if
we substitute \eqref{eq:barycenter-below-halfway} in the Huygens
barycentric equation we do recover~\eqref{eq:HH-first}:
$$
\frac{a}{2} < \xi \implies \frac{\Sg}{\dl} 
> \frac{2}{3} \. \frac{2r - 2\xi}{r - \xi} = \frac{4}{3} \,.
$$


\subsection{Hofmann's proof of XVI of \emph{Inventa}} 
\label{ssc:Hofmann}

In this subsection, another proof of Theorem~\ref{th:Huygens-XVI} is
given, following Hofmann \cite{Hofmann1966}, who in turn takes cues
from Schuh~\cite{Schuh1914}.

\begin{figure}[htb]
\centering
\begin{tikzpicture}[scale=1.2, rotate=-90]
\coordinate (A) at (100:2cm) ;
\draw (A) ++ (0,1) coordinate (A1) ;
\coordinate (B) at (180:2cm) ;
\coordinate (C) at (-100:2cm) ;
\draw (C) ++ (0,-1) coordinate (C1) ;
\coordinate (D) at ($ (A)!0.5!(C) $) ;
\coordinate (P) at ($ (B)!0.6!(D) $) ;
\draw (P) ++ (0,2) coordinate (P1) ;
\draw (P) ++ (0,-2) coordinate (P2) ;
\path[name path=bigOh] (B) circle(2cm) ;
\path[name path=Aup] (A) -- (A1) ;
\path[name path=Cdown] (C) -- (C1) ;
\path[name path=Pup] (P1) -- (P2) ;
\path[name intersections={of=Pup and bigOh, by={Q,R}}] ;
\draw[blue, name path=Para] 
    plot[variable=\y,smooth,domain=0:1.6]
    ({-2.0 + 0.66*\y^2},{1.4*\y}) ; 
\draw[blue, name path=Para] 
    plot[variable=\y,smooth,domain=0:1.59]
    ({-2.0 + 0.66*\y^2},{1.4*\y}) ; 
\draw[blue, name path=Paraleft] 
    plot[variable=\y,smooth,domain=0:1.59]
    ({-2.0 + 0.66*\y^2},{-1.4*\y}) ; 
\path[name intersections={of=Aup and Para, by={E,E1}}] ;
\path[name intersections={of=Cdown and Paraleft, by={F,F1}}] ;
\draw (A) arc(100:260:2cm) ; 
\draw (B) node[above] {$B$} -- (D) node[below] {$D$} ;
\draw (Q) node[above right] {$Q$} -- (P) node[above right] {$P$}
   -- (R) node[above left] {$R$} ;
\draw (E) node[above right] {$E$} -- (A) node[below] {$A$} 
   -- (C) node[below] {$C$} -- (F) node[above left] {$F$} ;
\foreach \pt in {A,B,C,D,E,F,P,Q,R} \draw (\pt) node{$\bull$} ;
\end{tikzpicture}
\caption{Comparing circular and parabolic segments}
\label{fg:Hofmann-six} 
\end{figure}

\begin{proof}[Hofmann's proof of Theorem~\ref{th:Huygens-XVI}]
Consider a circular segment $ABC$ with vertex $B$ and diameter $BD$.

Let $P$ be the point that divides the diameter in the ratio
$BP : PD = 3 : 2$. Draw the chord $QR$ through $P$, parallel to the 
base $AC$ of the segment, with $Q$ between $A$ and $B$, $R$ between 
$B$ and $C$, on the circular are $\arc{AC}$. Next, we construct the
\textit{parabola} with axis of symmetry $BD$ which passes through $B$,
$Q$ and~$R$, and cuts the prolongation of the line $AC$ at the points
$E$ and~$F$. (See Figure~\ref{fg:Hofmann-six}.)

As already argued in the proof of Theorem~\ref{th:Huygens-XIV} (see
Figure~\ref{fg:Huygens-XIV-two}), the \textit{parabolic} arcs
$\arc{BQ}$ and $\arc{BR}$ lie inside the segment $ABC$, while the
parabolic arcs $\arc{QE}$ and $\arc{RF}$ lie outside that segment.

By Proposition~II.8 of Archimedes' work \textit{On the Equilibrium of
Planes} \cite[pp.~214]{Heath1912}, the point $P$ is the barycenter of
the \textit{parabolic} segment $EBF$. To change this parabolic segment
into the \textit{circular} segment $ABC$, we make two modifications.
First, we \textit{add} the slivers $BQ$ and $BR$ bounded by the
circular and parabolic arcs, above the line $PQ$ (i.e., on the side
nearer~$B$). Second, we \textit{remove} the regions $AEQ$ and
respectively $CFR$ bounded by the parabolic arcs $\arc{EQ}$
(resp.~$\arc{FR}$), the circular arcs $\arc{QA}$ (resp.~$\arc{RC}$)
and the lines $AE$ (resp.~$CF$); these regions lie below the line~$PQ$
(i.e., on the side nearer~$D$). Both modifications shift the
barycenter upwards, towards~$B$. In this way, the barycenter $L$ of
the circular segment $ABC$, on the diameter~$BD$, is seen to lie
between $P$ and~$B$. Therefore,
$$
\xi < \frac{3a}{5} \,.
$$

Combining this relation with \eqref{eq:barycentric-eqn}, we obtain
$$
\frac{\Sg}{\dl} < \frac{4}{3} \. \frac{2r - a}{2r - 6a/5}\,.
$$

Now reconsider the circular segment $AEB$ in the notation of 
Figure~\ref{fg:Huygens-XVI}, with the parameters
\begin{equation}
BD = a, \quad 
AC = b, \quad 
AM = c, \quad 
OA = OB = OC = r, \quad  
OD = r - a,
\label{eq:segment-parameters} 
\end{equation}
and the arc length $s$ of~$\arc{AC}$.

The area \eqref{eq:inscribed-area} of the inscribed triangle 
$\tri ABC$ is $\dl = \half ab$. On the other hand, the similarity 
$\tri COD \sim \tri CAM$ implies that 
$$
\frac{r}{r - a} = \frac{b}{c}\,.
$$

Therefore, Huygens' barycentric equation \eqref{eq:barycentric-eqn}
gives us
$$
\Sg < \frac{4\dl}{3} \. \frac{2r - a}{2r - 6a/5}
= \frac{2ab}{3} \. \frac{b + c}{\frac{2}{5}(2b + 3c)}
= \frac{10}{3} \. \frac{b^2 - c^2}{2b + 3c} \. \frac{r}{2}\,.
$$
Adding the area $\half rc$ of the triangle $\tri OAC$, we obtain
$$
\frac{rs}{2} = (\sector OAC) = (\tri OAC) + \Sg 
< \frac{rc}{2} + \frac{10}{3}\.\frac{b^2 - c^2}{2b + 3c}\.\frac{r}{2}
$$
whereby, on dividing by $r/2$,
\begin{equation}
s < c + \frac{10}{3}\.\frac{b^2 - c^2}{2b + 3c}\,.
\label{eq:upper-bound} 
\end{equation}

This inequality \eqref{eq:upper-bound} is easily seen to be equivalent
to
$$
s < b + \frac{b - c}{3}\.\frac{4b + c}{2b + 3c}
$$
which is precisely Huygens' \eqref{eq:Huygens-XVI-second}.
\end{proof}

To compare \eqref{eq:upper-bound} with the one stated in the
introduction, the correspondence is:
\begin{equation}
s = x,  \quad
c = \sin x, \quad
b = 2\,AD = 2 \sin \frac{x}{2}\,.
\label{eq:trig-parameters} 
\end{equation}
Thereby, \eqref{eq:upper-bound} exactly recovers the modern formula
\eqref{eq:Huygens-upper}.

\paragraph{Error Analysis}
If we apply Corollary~\ref{cr:Huygens-XVI}, consistent with
\eqref{eq:upper-bound}, to a circle of diameter~$1$, we obtain
\begin{equation*}
\biggl\{ C_{2n} + \frac{C_{2n} - C_n}{3}
\. \frac{4C_{2n} + C_n}{2C_{2n} + 3C_n} \biggr\} - \pi
< \frac{\pi^7}{22400}\.\frac{1}{n^6} + O\Bigl( \frac{1}{n^8} \Bigr)
\end{equation*}
which shows the high accuracy of Huygens' approximation.

\begin{remk} 
\label{rk:area-comparison}
Although not needed for the proof, the diagram in 
Figure~\ref{fg:Hofmann-six} poses an intriguing question: which area 
is larger, the sliver $BQ$ or the triangular region $AEQ$? It turns 
out that they are almost equal, but the latter is indeed larger than 
the former in all cases. We examine this relation in 
Appendix~\ref{app:area-comparison}.
\end{remk}


\subsection{Huygens unproved barycentric theorem,
at the end of \textit{Inventa}} 
\label{ssc:Huygens-unfinished}

Huygens wanted a \textit{lower} bound with the same accuracy
$O(1/n^6)$ as his upper bound \eqref{eq:Huygens-XVI-second}. All he says,
in the final section (Problem~IV, alias Proposition~XX) of
\textit{Inventa}, is this \cite[p.~63]{Pearson1923}:
\begin{quote}
``[\dots] another lower limit may be obtained more accurate than the
first, if we use the following rule which depends upon a more careful
investigation of the center of gravity.''
\end{quote}
To achieve it he announced, \textit{without proof}, the following
approximation:

\begin{quote}
\itshape
Let four-thirds of the difference between the limits found be
added to twice the chord plus three times the sine, and let the
difference between the chord and the sine have the same ratio to
another line that the line thus made~up has to three and one-third or
ten-thirds times the sum of the two; this other line added to the sine
makes a line which is less than the~arc.
\end{quote}

To parse this in terms of our parameters
\eqref{eq:segment-parameters}, where the chord $AC$ has length~$b$ and
the sine $AM$ has length~$c$, the lower bound
\eqref{eq:Huygens-XVI-first} is expressed by
\eqref{eq:easy-lower-bound}:
$$
s > b + \frac{1}{3}(b - c) = c + \frac{4}{3}(b - c),
$$
so the two bounds differ by 
$$
\frac{10(b^2 - c^2)}{3(2b + 3c)} - \frac{4(b - c)}{3}
= \frac{2(b - c)^2}{3(2b + 3c)}\,.
$$
Thus, if
$$
d := 2b + 3c + \frac{8(b - c)^2}{9(2b + 3c)}
$$
then the new putative lower bound is $c + e$, where
$$
\frac{b - c}{e} = \frac{d}{\frac{10}{3}(b + c)}
$$
and hence, according to Huygens,
\begin{equation}
s > c + e = c + \frac{10}{3}\.
\frac{b^2 - c^2}{2b + 3c + \dfrac{8(b - c)^2}{9(2b + 3c)}}\,.
\label{eq:lower-bound} 
\end{equation}

We may combine these upper and lower bounds, showing their structural
similarity.

\begin{thm} 
\label{th:both-bounds}
The arc length $s$ of a circle segment with chord $b$ and sine~$c$ 
satisfies the bounds:
\begin{equation}
c + \frac{10}{3}\.
\frac{b^2 - c^2}{2b + 3c + \dfrac{8(b - c)^2}{9(2b + 3c)}}
< s < c + \frac{10}{3}\.\frac{b^2 - c^2}{2b + 3c}\,.
\label{eq:both-bounds} 
\end{equation}
\end{thm}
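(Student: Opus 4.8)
The upper bound in \eqref{eq:both-bounds} is nothing but \eqref{eq:upper-bound} (equivalently \eqref{eq:Huygens-XVI-second}), already established in Theorem~\ref{th:Huygens-XVI} through Huygens' barycentric equation; so the only real content is the \emph{lower} bound, which is precisely the estimate \eqref{eq:lower-bound} that Huygens announced without proof. My plan is to prove it by collapsing it to a single-variable inequality and then analysing that inequality directly. First I would pass to the parametrisation \eqref{eq:trig-parameters}, writing $t := x/2$, so that $s = 2t$, $c = \sin 2t$, $b = 2\sin t$. The simplifications $b^2 - c^2 = 4\sin^4 t$, $\;2b + 3c = 2\sin t\,(2 + 3\cos t)$ and $(b - c)^2 = 4\sin^2 t\,(1 - \cos t)^2$ turn \eqref{eq:lower-bound}, after clearing denominators, into the compact inequality
\begin{equation*}
(2t - \sin 2t)\bigl( 9(2 + 3\cos t)^2 + 8(1 - \cos t)^2 \bigr)
> 60\,(2 + 3\cos t)\sin^3 t,
\qquad 0 < t < \tfrac{\pi}{2}.
\end{equation*}
In the spirit of the paper, the same inequality is a \emph{sharpened lower bound on the barycenter}, $\xi > \dfrac{a(46b + 89c)}{45(2b + 3c)}$, which—via \eqref{eq:barycentric-eqn} and the closed form $\xi = r - \tfrac{4\sin^3 t}{3(2t - \sin 2t)}$—refines the crude estimate $\xi > \tfrac{a}{2}$ of Lemma~\ref{lm:Hofmann}; this is exactly the ``more careful investigation of the center of gravity'' that Huygens invoked.

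Writing $F(t)$ for the difference of the two sides above, product-to-sum reduction turns it into the explicit closed form
\begin{equation*}
F(t) = 177\,t + 184\,t\cos t + 89\,t\cos 2t
- 136\sin t - \tfrac{267}{2}\sin 2t - 16\sin 3t + \tfrac14\sin 4t ,
\end{equation*}
and one checks that $F$ is odd, that $F$ and its first six derivatives vanish at $0$, and that $F(t) = \tfrac{29}{21}\,t^7 + O(t^9)$, so the bound is correct to leading order (consistent with the $O(1/n^6)$ accuracy noted after Theorem~\ref{th:Huygens-XVI}). The natural hope—that every Taylor coefficient of $F$ is nonnegative—\emph{fails}: the coefficient of $t^{2n+1}$ equals $(-1)^n A_n/(2n+1)!$ with $A_n = (2n+1)(184 + 89\cdot 4^n) - 136 - 267\cdot 4^n - 48\cdot 9^n + 16^n$, and although $A_1 = A_2 = 0$ and $A_3 < 0$ (yielding the positive $t^7$ term), already $A_4 < 0$ makes the $t^9$ coefficient negative. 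Positivity on the whole interval therefore cannot be read off term by term.

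The workable route, and the step I expect to be the main obstacle, is a monotonicity argument rather than a coefficient count. I would set $f(t) := 2t - \sin 2t$ and $g(t) := 60(2 + 3\cos t)\sin^3 t/\bigl(9(2 + 3\cos t)^2 + 8(1 - \cos t)^2\bigr)$, both positive on $(0,\tfrac{\pi}{2})$ and vanishing to third order at $0$ with $f/g \to 1$, and prove $f/g > 1$ by showing that $f/g$ is increasing. By the monotone form of l'Hôpital's rule (the device behind Pinelis's proof \cite{Pinelis2024}), this follows once a ratio of derivatives is shown to be increasing; since the numerators and denominators vanish to third order at the origin, a few applications of the rule are available, after which one must establish the monotonicity of the resulting ratio of higher derivatives directly. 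The delicate point is that the inequality is extremely tight—numerically $F$ stays positive but dips to about $0.2$ near $t = \tfrac{\pi}{4}$—so the oscillatory terms $-16\sin 3t$ and $\tfrac14\sin 4t$ must be weighed against the linear-in-$t$ terms $177\,t + 184\,t\cos t + 89\,t\cos 2t$ with genuine care; this bookkeeping, not any conceptual leap, is where the effort lies. A purely synthetic proof in Huygens' own style would instead demand bounding the displacement of the barycenter produced by the sliver corrections of Figure~\ref{fg:Hofmann-six}—the very area comparison taken up in Appendix~\ref{app:area-comparison}—and it is this synthetic version that Huygens left unfinished.
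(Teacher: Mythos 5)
Your reduction is correct and everything you actually compute checks out: with $b = 2\sin t$, $c = \sin 2t$ the lower bound \eqref{eq:lower-bound} is indeed equivalent to
$(2t - \sin 2t)\bigl(9(2+3\cos t)^2 + 8(1-\cos t)^2\bigr) > 60\,(2+3\cos t)\sin^3 t$ on $(0,\tfrac{\pi}{2})$;
your closed form for $F(t)$ is right (I re-derived the product-to-sum reduction and all seven coefficients agree); the Taylor data $A_1 = A_2 = 0$, $A_3 = -6960$, hence $F(t) = \tfrac{29}{21}t^7 + O(t^9)$, is right; and the restatement as the sharpened barycentric bound $\xi > a(46b+89c)/\bigl(45(2b+3c)\bigr)$, refining Lemma~\ref{lm:Hofmann} via \eqref{eq:barycentric-eqn}, is also right. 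But there is a genuine gap, and you name it yourself: the positivity $F(t) > 0$ on $(0,\tfrac{\pi}{2})$ --- which is the entire content of the theorem beyond the already-proved upper bound \eqref{eq:upper-bound} --- is never established. The monotone-l'H\^opital plan for $f/g$ is only a plan: $g$ is a quotient whose successive derivatives are unwieldy, and your own computation that the $t^9$ coefficient of $F$ is negative shows the inequality cannot be settled by any local analysis at the origin, so the deferred ``monotonicity of the resulting ratio of higher derivatives'' is not bookkeeping but precisely the whole difficulty. As written, you have \emph{reformulated} Huygens' unproved lower bound, not proved it.

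For comparison, the paper does not prove the lower bound internally either: Theorem~\ref{th:both-bounds} is assembled from the upper bound \eqref{eq:Huygens-XVI-second} of Theorem~\ref{th:Huygens-XVI} together with a citation to Pinelis \cite{Pinelis2024} for the lower half, and the paper explicitly records that no geometric proof is known. Pinelis' substitution is $t = \tan(x/4)$, not $t = x/2$: it turns the right-hand side of \eqref{eq:Huygens-lower} into a \emph{rational} function of $t$ to be compared with $4\arctan t$, so that after one differentiation the positivity question becomes a polynomial inequality --- exactly the algebraic tractability that your trigonometric $F(t)$, with its incommensurable terms $177t + 184t\cos t + 89t\cos 2t$ against $-16\sin 3t + \tfrac14 \sin 4t$, lacks. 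To close your argument, either carry out the rationalizing substitution and finish with a polynomial sign check, or import Pinelis' result for the lower half of \eqref{eq:both-bounds} as the paper does.
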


On converting the parameters of the inequalities
\eqref{eq:both-bounds} with the correspondence
\eqref{eq:trig-parameters}, we recover the formulas
\eqref{eq:Huygens-upper} and~\eqref{eq:Huygens-lower} stated in the
introduction. The theorem also gives a range of validity of those
formulas as $0 \leq x < \pi$. (The limiting case $x = \pi$ gives the
unremarkable estimates $\frac{30}{11} < \pi < \frac{10}{3}$.)
Naturally, the method of approximation by regular polygons shows that
the power of the inequalities lies in their application to
\emph{small} angles.

\begin{corl} 
\label{cr:both-bounds}
The circumference $C$ of a circle may be estimated in terms of the
perimeters $C_n$, $C_{2n}$ of inscribed regular polygons of $n$ and
$2n$ sides, respectively, as follows:
\begin{equation}
\boxed{C_n + \frac{10}{3} \. \frac{C_{2n}^2 - C_n^2}
{2C_{2n} + 3C_n 
+ \dfrac{8}{9}\.\dfrac{(C_{2n} - C_n)^2}{2C_{2n} + 3C_n}}
< C 
< C_n + \frac{10}{3} \. \frac{C_{2n}^2 - C_n^2}{2C_{2n} + 3C_n}}\,.
\label{eq:Huygens-bounds} 
\end{equation}
\end{corl}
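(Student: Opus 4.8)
The plan is to obtain \eqref{eq:Huygens-bounds} from Theorem~\ref{th:both-bounds} by the very substitution that turned \eqref{eq:Snell-any-arc} into Snell's inequality \eqref{eq:Snell-first} inside the proof of Theorem~\ref{th:Huygens-VII}. First I would fix the geometric dictionary: take $AC = b$ to be a side of the inscribed regular $2n$-gon, so that its subtended arc $\arc{AC} = s$ is one $2n$-th of the circumference, and let $c = AM$ be the corresponding sine. As recorded there, $2c = AE$ is then a side of the inscribed regular $n$-gon. Since the circle is tiled by $2n$ congruent such arcs, this yields the three scaling relations $2n\,b = C_{2n}$, $\;2n\,c = C_n$ (the $n$-gon having $n$ sides each of length $2c$), and $2n\,s = C$.

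The decisive observation is that each bounding expression in \eqref{eq:both-bounds}, viewed as a function of the pair $(b,c)$, is \emph{homogeneous of degree one}. Under $(b,c)\mapsto(\lambda b,\lambda c)$ the standalone term $c$ scales by $\lambda$, while the quotient $\frac{b^2-c^2}{2b+3c}$ scales by $\lambda^2/\lambda=\lambda$; for the lower bound the compound denominator $2b+3c+\frac{8(b-c)^2}{9(2b+3c)}$ is itself homogeneous of degree one, because its correction term has degree $2-1=1$, so the nested fraction again scales by $\lambda$. Writing the bounds as $F_{\mathrm{low}}(b,c)<s<F_{\mathrm{up}}(b,c)$ with $F_{\mathrm{low}},F_{\mathrm{up}}$ of degree one, multiplication of all three members by the positive factor $2n$ gives $2n\,F_{\mathrm{low}}(b,c)<2n\,s<2n\,F_{\mathrm{up}}(b,c)$, and homogeneity turns $2n\,F(b,c)$ into $F(2n\,b,2n\,c)=F(C_{2n},C_n)$. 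Together with $2n\,s=C$ this is precisely \eqref{eq:Huygens-bounds}.

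The only place calling for care is the algebraic bookkeeping in the lower bound: I would check explicitly that factoring $\frac{1}{2n}$ out of $2b+3c+\frac{8(b-c)^2}{9(2b+3c)}$ reproduces $2C_{2n}+3C_n+\frac{8}{9}\cdot\frac{(C_{2n}-C_n)^2}{2C_{2n}+3C_n}$, with the coefficient $\frac{8}{9}$ surviving intact. This is a short, slightly error-prone reduction rather than a genuine obstacle, and the homogeneity argument guarantees in advance that it must close. Finally I would note the mild hypothesis underlying the application: Theorem~\ref{th:both-bounds} requires the arc to be less than a semicircle, which holds here for every $n\geq 2$ since the side of the $2n$-gon subtends the central angle $\pi/n<\pi$.
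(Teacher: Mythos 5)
Your proposal is correct and is essentially the paper's own proof: the paper likewise sets $C_{2n}=2nb$, $C_n=2nc$ (the sine $AM$ being half the side of the inscribed $n$-gon) and then simply multiplies all terms of \eqref{eq:both-bounds} by $2n$. Your explicit degree-one homogeneity check merely spells out why that multiplication passes cleanly through the nested denominator, which the paper leaves implicit.
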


\begin{proof}
If the chord $AC$ of Figure~\ref{fg:Huygens-XVI} is the side of a
regular polygon of $2n$~sides, its perimeter is $C_{2n} = 2nb$. The
sine $AM$ is \textit{half} the side length of a regular polygon of
$n$~sides (see Figure~\ref{fg:Huygens-VII}); hence its perimeter is
$C_n = 2nc$. On multiplying all terms of \eqref{eq:both-bounds}
by~$2n$, we obtain the global estimate \eqref{eq:Huygens-bounds}.
\end{proof}

For $n = 30$, Huygens used the following approximations
in~\eqref{eq:Huygens-bounds}:
\begin{align*}
3.13585389802979 &< C_{30} < 3.13585389802980,
\\
3.14015737457639 &< C_{60} < 3.14015737457640,
\\
\shortintertext{which gives}
3.14159265339060 &< \pi <3.14159265377520.
\end{align*}
This rounds to the now famous result, declared in~\emph{Inventa}:
\begin{equation}
\boxed{3.1415926533 < \pi < 3.1415926538}\,.
\label{eq:Huygens-for-pi} 
\end{equation}

Apparently, Huygens never wrote down a geometric proof of his new
lower bound~\eqref{eq:lower-bound}. To this day, such a
\emph{geometric} proof remains unknown.

\medskip

However, in 1914 Frederik Schuh \cite{Schuh1914} replaced a circular
segment by the cleverly chosen parabolic segment of
Figure~\ref{fg:Hofmann-six}, used Archimedes' determination of the
barycenter of the latter to prove the following inequality:
\begin{equation}
\xi > \frac{3}{5}\,a - \frac{3a^2}{25(r - \frac{3}{5}a)}\,.
\label{eq:Schuh} 
\end{equation}
Then Schuh used Huygens' barycentric equation
\eqref{eq:barycentric-eqn} to transform \eqref{eq:Schuh} into the
version of Huygens' final inequality with a constant~$27$ instead
of~$8$ in the denominator of Huygens' original inequality
\eqref{eq:lower-bound}. It is worth pointing out, as Schuh himself
noted \cite[p.~247]{Schuh1914}, that if we replace the $3$ in the
numerator of \eqref{eq:Schuh} by $\frac{8}{9}$, then his
transformation would produce Huygens' original lower bound. This is
the best result known, in a geometric formulation.

Of course, with differential calculus it is not difficult to verify
the inequality \eqref{eq:Huygens-lower}, equivalent to Huygens' lower
bound. Pinelis~\cite{Pinelis2024} suggested using the change of
variable $t := \tan(x/4)$ to transform the right-hand side into a
rational approximation $f(t)$ to $4 \arctan t$, which he then showed
to be an underestimate. Pinelis' argument seems to be the first
available proof of Huygens' lower bound.


\subsection{The location of the barycenter} 
\label{ssc:barycenter-location}

The distance from the center of a circle to the barycenter of a
circular segment is
\begin{equation}
\frac{4}{3}\. \frac{r \sin^3\frac{\th}{2}}{\th - \sin\th}
\label{eq:modern-location} 
\end{equation}
where $r$ is the radius of the circle and $\th$ is the central angle
of the segment. This formula can readily be obtained nowadays by
standard calculus techniques. Whether Huygens was cognizant of it is
not clear to us; but we now show that it is \emph{equivalent} to
Huygens' Theorem~V in \emph{Theoremata}, using only classical
geometrical arguments and the law of the lever.

That theorem, that we refer to as Huygens' Grossehilfsatz (quoted here
as our Proposition~\ref{pr:Huygens-Theoremata}) is the
\emph{Archimedean-style statement of the location of the barycenter of
a circular segment}. It is an \emph{implicit} statement, in that it
declares that the circular segment balances a certain triangle whose
barycenter is known (Archimedes located it at the intersection of the
medians), if the fulcrum is the center of the encompassing circle. It
does not explicitly give a formula for the barycenter; rather, it
locates it so that the equilibrium takes place.

We now show how to transform that implicit statement into the explicit
formula above.

\begin{thm} 
\label{th:modern-location}
According to Huygens' Proposition~\ref{pr:Huygens-Theoremata}, the 
distance from the center of the circle of radius~$r$ to the barycenter 
of the segment with central angle~$\th$ is given
by~\eqref{eq:modern-location}.
\end{thm}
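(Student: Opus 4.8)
The plan is to feed explicit expressions in $r$ and $\th$ into Huygens' barycentric equation~\eqref{eq:barycentric-eqn} and then solve for the distance $OL$. In the coordinate setup of subsection~\ref{ssc:Huygens-barycenter-equation} the vertex $B$ sits at the origin, the centre $O$ lies at $x = r$ on the diameter $BD$ produced, and the barycenter is $L = (\xi,0)$ with $\frac{a}{2} < \xi < a < r$ by Lemma~\ref{lm:Hofmann}; hence $OL = r - \xi > 0$, and \eqref{eq:barycentric-eqn} rearranges to
\[
OL = r - \xi = \twothirds\,(2r - a)\,\frac{\dl}{\Sg}\,.
\]
It therefore suffices to compute $a$, $\dl = (\tri ABC)$, and $\Sg = (\segment ABC)$ as functions of $r$ and~$\th$, substitute, and simplify.

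For a segment of central angle~$\th$ the chord subtends $\th$ at $O$, so $AC = 2r\sin\frac{\th}{2}$, while the half-angle $\frac{\th}{2}$ controls the height: the foot $D$ of the perpendicular from $O$ to $AC$ satisfies $OD = r\cos\frac{\th}{2}$, whence $a = BD = r\bigl(1 - \cos\frac{\th}{2}\bigr)$ and the opposite diameter is $2r - a = PD = r\bigl(1 + \cos\frac{\th}{2}\bigr)$. The maximum inscribed triangle then has area
\[
\dl = \half\,AC \. BD = r^2 \sin\tfrac{\th}{2}\bigl(1 - \cos\tfrac{\th}{2}\bigr),
\]
while the segment area is the sector-minus-triangle expression $\Sg = \half r^2(\th - \sin\th)$.

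Substituting these three quantities into the displayed rearrangement of~\eqref{eq:barycentric-eqn} gives
\[
OL = \twothirds\,r\bigl(1 + \cos\tfrac{\th}{2}\bigr)
\. \frac{r^2\sin\frac{\th}{2}\bigl(1 - \cos\frac{\th}{2}\bigr)}
{\half r^2(\th - \sin\th)}
= \frac{4}{3}\,\frac{r\sin\frac{\th}{2}\bigl(1 - \cos^2\frac{\th}{2}\bigr)}{\th - \sin\th}\,,
\]
and the single trigonometric collapse $1 - \cos^2\frac{\th}{2} = \sin^2\frac{\th}{2}$ turns the numerator into $r\sin^3\frac{\th}{2}$, delivering exactly~\eqref{eq:modern-location}.

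There is no deep obstacle: once the barycentric equation is in hand, the derivation is pure bookkeeping followed by a one-line simplification. The only point requiring care is the translation between Huygens' metrical parameters and the angle~$\th$ -- specifically, remembering that a segment of central angle $\th$ has its chord subtending the full angle $\th$ but its height $a$ and the offset $OD$ governed by the half-angle $\frac{\th}{2}$ -- together with the identification $OL = r - \xi$, which hinges on $O$ lying beyond $L$ on the diameter (guaranteed by $\xi < a < r$).
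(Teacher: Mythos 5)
Your proof is correct, and every computation checks out: the chord $2r\sin\frac{\th}{2}$, the height $a = r\bigl(1-\cos\frac{\th}{2}\bigr)$, the areas $\dl = r^2\sin\frac{\th}{2}\bigl(1-\cos\frac{\th}{2}\bigr)$ and $\Sg = \half r^2(\th-\sin\th)$, and the final collapse via $1-\cos^2\frac{\th}{2} = \sin^2\frac{\th}{2}$ all land exactly on~\eqref{eq:modern-location}. However, your route differs from the paper's. The paper never touches the barycentric equation~\eqref{eq:barycentric-eqn}; instead it applies the law of the lever \emph{directly} to the configuration of Proposition~\ref{pr:Huygens-Theoremata}: the auxiliary triangle $\tri KOH$ has base $b$, height $\half b$, area $\quarter b^2$, and barycenter at distance $\third b$ from~$O$, so balancing against the segment gives $\frac{b}{3}\.\frac{b^2}{4} = \bar x \.\frac{r^2}{2}(\th - \sin\th)$, from which the formula follows by writing $\half b = r\sin\frac{\th}{2}$. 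You instead reuse Theorem~\ref{th:barycentric-eqn}, which is itself a restatement of \eqref{eq:balancing-act} and hence ultimately rests on the same Proposition~\ref{pr:Huygens-Theoremata}, so the logical dependence the theorem statement demands is preserved, though at one remove. What each approach buys: yours is more modular and reduces the whole matter to trigonometric bookkeeping on the inscribed triangle $\tri ABC$ (the auxiliary triangle $\tri KOH$ never appears); the paper's direct lever computation makes the \emph{equivalence} transparent -- it notes the steps are reversible, so the modern formula conversely encodes the balancing statement -- a point that is less immediate in your version, where the reversal would have to pass back through the barycentric equation.
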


\begin{proof}
Let $\bar x := OL$ be the distance from the center of the circle to
the barycenter of the segment $ABC$, see
Figure~\ref{fg:Huygens-Theoremata}, and let $a = BD$ be the diameter
and $b = AB$ the base of the segment, as
in~\eqref{eq:segment-parameters}. The Huygens associated triangle
$\tri KOH$ has base~$b$ and height $\sqrt{a(2r - a)} = \half b$, in
view of~\eqref{eq:counter-weight}. Therefore the distance from the
center $O$ of the circle to the barycenter $M$ of that triangle is
$\frac{2}{3}\.\half b = \third b$. Its area is
$(\tri KOH) = \quarter b^2$.

Now, since $\half b = r \sin \frac{\th}{2}$ and
$r - a = r \cos \frac{\th}{2}$, the area of the circular segment $ABC$
is
$$
\frac{1}{2} r^2\th - \frac{1}{2} b(r - a)
= \frac{1}{2} r^2\th - r^2 \sin\frac{\th}{2} \cos\frac{\th}{2}
= \frac{r^2}{2}(\th - \sin\th).
$$
Therefore, by the law of the lever,
$$
OM \. (\tri KOH) = OL \. (\segment ABC),
$$
that is,
\begin{equation}
\frac{b}{3} \. \frac{b^2}{4} = \bar x \. \frac{r^2}{2}(\th - \sin\th),
\label{eq:leverage} 
\end{equation}
which is to say,
$$
\frac{b^3}{12} = \frac{2}{3}\, r^3 \sin^3 \Bigr( \frac{\th}{2} \Bigr)
= \bar x \. \frac{r^2}{2}(\th - \sin\th).
$$
Solving for $\bar x$, we arrive at \eqref{eq:modern-location}:
$$
\bar x = \frac{4}{3}\,\frac{r \sin^3 \frac{\th}{2}}{\th - \sin\th}
$$
which is the known modern formula. 
\end{proof}

Clearly, the steps are reversible: since the last formula is 
equivalent to~\eqref{eq:leverage}, it expresses that the triangle 
$\tri KOH$ and the segment $ABC$ balance at the coordinate origin,
i.e., at the center $O$ of the circle.

Huygens solved the general problem of locating the barycenter of a
segment of \emph{any} conic section with a center (namely, an ellipse
or a hyperbola) in his earlier work \emph{Theoremata}. In this paper,
we adapt his results only to the special case of a circle. We surmise
that he was trying to complete the earlier work of Archimedes on
\emph{parabolic} segments.


\section{A historical conjecture} 
\label{sec:speculation}
 
Both the Cusa inequality and Snell's own inequality may be termed
\textit{convergence-improving} inequalities, since they produce at
least twice the accuracy as the original procedure of Archimedes.
 
We make the following historical conjecture: \textit{Archimedes was
fully cognizant of the Snell--Cusa convergence-improving inequalities
and may have used them to obtain closer bounds on~$\pi$}.

We base our suggestion on the following considerations.
\begin{enumerate}

\item 
It is well known that the extant text of \textit{Measurement} is a
damaged and corrupt extract from a more comprehensive study by
Archimedes of the metric properties of circular figures.
Unfortunately, his full tract was lost due to the willy-nilly of
history.

\item 
Some three centuries later, Heron \cite{Heron} quotes closer bounds on
$\pi$ computed by Archimedes, but they are corrupted (the quoted lower
bound is actually a quite good \textit{upper} bound). Moreover, he
gives no indication as to how they were calculated.
 
\item 
Heron also cites a proposition on circular segments from
\textit{Measurement} (namely, the \textit{first Heron--Huygens lemma})
which is \textit{not} in the extant text of today; indeed Eutokios
(6th century A.D.) had a version of \textit{Measurement}
\cite{Eutokios} which virtually coincides with today's variant. So by
then the original full treatise had already been lost.
 
\item 
Now we submit the major piece of evidence for our conjecture. The
treatise \textit{The Book of Lemmas}, transmitted by Arab
mathematicians, contains as its Proposition~8 Archimedes' famous
\textit{trisection of an angle} \cite[pp.~309]{Heath1912}. The
trisection construction creates a diagram (see
Figure~\ref{fg:Huygens-XII} above) \textit{which coincides exactly
with the Snell--Cusa figures in Huygens' treatise.} We submit that
this is no mere coincidence. Rather, it seems to us that Archimedes
worked assiduously on finding approximations to~$\pi$ and \textit{as
an incidental by-product} encountered the trisection construction. His
familiarity with the trisection figure and his investigations of such
approximations suggests to us that he must have been \textit{fully
aware of the applications of that figure to convergence-improving
approximations}. To suggest otherwise is to say that the most
brilliant mathematician of antiquity, working on geometric
approximations to~$\pi$, \textit{was unaware of that figure's
applications to his researches}. We consider this latter conclusion to
be inconceivable.
 
\item 
Assuming his familiarity with the Snell--Cusa improvements, it appears
quite likely that he used them to compute the closer bounds
cited by Heron. Our current knowledge precludes rigorous conclusions,
but the argument outlined above seems very plausible.

\end{enumerate}


\appendix

\section{Huygens' barycentric \emph{Grossehilfsatz}} 
\label{app:helpful}

The following argument establishes the auxiliary
Proposition~\ref{pr:Huygens-Theoremata}, needed for
Theorem~\ref{th:Huygens-Theoremata}.

\begin{figure}[htb]
\centering
\begin{tikzpicture}[scale=2.2]
\coordinate (O)  at (0,0) ;              
\coordinate (A)  at (1.732,1) ;          
\coordinate (B)  at (0,2) ;              
\coordinate (A1) at (0.433,1) ;          
\coordinate (A2) at (0.866,1) ;          
\coordinate (A3) at (1.299,1) ;          
\coordinate (C)  at (-1.732,1) ;         
\coordinate (C1) at (-1.299,1) ;         
\coordinate (C2) at (-0.866,1) ;         
\coordinate (C3) at (-0.433,1) ;         
\coordinate (D)  at (0,1) ;              
\coordinate (E) at (2,0) ;               
\coordinate (F) at (-2,0) ;              
\coordinate (G)  at (0,-1.732) ;
\coordinate (G1) at (-1.299,-1.732) ;    
\coordinate (G2) at (-0.866,-1.732) ;    
\coordinate (G3) at (-0.433,-1.732) ;    
\coordinate (G4) at (0.433,-1.732) ;     
\coordinate (G5) at (0.866,-1.732) ;     
\coordinate (G6) at (1.299,-1.732) ;     
\coordinate (H)  at (-1.732,-1.732) ;
\coordinate (H1) at (-1.299,-1.299) ;    
\coordinate (H2) at (-0.866,-0.866) ;    
\coordinate (H3) at (-0.433,-0.433) ;    
\coordinate (K) at (1.732,-1.732) ;
\coordinate (K1) at (1.299,-1.299) ;     
\coordinate (K2) at (0.866,-0.866) ;     
\coordinate (K3) at (0.433,-0.433) ;     
\coordinate (L) at ($ (O)!0.3!(D) $) ;   
\coordinate (N)  at (0,-1.85) ;          
\coordinate (P)  at (0,-2) ;             
\coordinate (P1) at (0,1.803) ;          
\coordinate (R1) at (-1.299,1.521) ;     
\coordinate (R2)  at (-0.866,1.803) ;    
\coordinate (R3) at (-0.433,1.953) ;     
\coordinate (R4) at (0.433,1.953) ;      
\coordinate (R5) at (0.866,1.803) ;      
\coordinate (R6) at (1.299,1.521) ;      
\coordinate (S)  at (0,-1.62) ;          
\coordinate (Y)  at (-1.082,0) ;         
\coordinate (Y1) at (-1.082,1.803) ;     
\coordinate (Y2) at (-1.082,1.401) ;     
\coordinate (Y3) at (-1.082,1) ;         
\coordinate (Y4) at (-1.082,-0.866) ;    
\coordinate (Y5) at (-1.082,-1.299) ;    
\coordinate (Y6) at (-1.082,-1.732) ;    
\draw (O) circle(2cm) ; 
\draw (A) node[right] {$A$} -- (B) node[above] {$B$}
    -- (C) node[left] {$C$} -- cycle ;
\draw (K) node[right] {$K$} -- (O) node[above right] {$O$}
    -- (H) node[above left] {$H$} -- cycle ;
\draw (B) -- (D) node[below left=-2pt] {$D$} -- (L) node[left] {$L$} 
    -- (S) node[above right=-2pt] {$S$}
    -- (G) node[left] {$G$} 
    -- (N) node[right=-2pt] {$N$} -- (P) node[below=2pt] {$E$} ;
\draw (E) node[right=2pt] {$J$} -- (F) node[left=2pt] {$I$} ;
\draw (Y1) node[above] {$F$} -- (Y2) node[left] {$V$}
    -- (Y3) node[below right] {$M$} -- (Y) node[above left] {$Y$}
    -- (Y4) node[above left] {$U$} -- (Y5) node[right] {$X$}
    -- (Y6) node[below=2pt] {$Z$} ;
\draw (P1) node[right] {$P$} -- (R2) node[below right] {$R$} ;
\draw[pattern={north west lines}, pattern color=blue!40]
      (C1) node[below left] {$Q$} rectangle (R2) ;
\draw[pattern={north west lines}, pattern color=blue!40]
      (G1) node[below left] {$T$} rectangle (H2) node[right] {$W$} ;
\draw[dashed] (C) rectangle (R1)  (C2) rectangle (R3) 
             (C3) rectangle (B) ;
\draw[dashed] (A) rectangle (R6)  (A3) rectangle (R5) 
             (A2) rectangle (R4)  (A1) rectangle (B) ;
\draw[dashed] (H) rectangle (H1)
             (G2) node[below right] {$T'$} rectangle (H3)
             (G3) rectangle (O) ;
\draw[dashed] (K) rectangle (K1)  (G6) rectangle (K2) 
             (G5) rectangle (K3)  (G4) rectangle (O) ;
\foreach \pt in {A,B,C,D,E,F,G,H,K,O,P,Y} 
    \draw (\pt) node{$\bull$} ;
\foreach \px in {C1,G1,G2,H2,P1,R2,Y1,Y2,Y3,Y4,Y5,Y6} 
    \draw (\px) node{$\bull$} ;
\foreach \mk in {L,N,S} \draw[red] (\mk) node{$\bull$} ;
\end{tikzpicture}
\caption{Balancing a circular segment and a triangle}
\label{fg:Huygens-barycenter} 
\end{figure}

Figure~\ref{fg:Huygens-barycenter}, adapted from~\cite{Huygens1651},
reproduces most of Figure~\ref{fg:Huygens-Theoremata} -- except for
the barycentric points $L$ and~$M$ of that diagram -- plus some
further detail. The circle segment $ABC$ and the triangle $KOH$ are as
before; they do not overlap. The emphasis now is on their disjoint
union:
$$
\sH := \segment ABC \cup \tri KOH.
$$
The segment $ABC$, less than a semicircle, is given; the diameter 
$BE$ of the circle extends the diameter $BD$ of the segment $ABC$.
The triangle $\tri KOH$ is constructed by first locating $G$ on the 
radius $OE$ so that
\begin{equation}
OG^2 = BD \. DE.
\label{eq:position-of-G} 
\end{equation}
The base $KH$ of the triangle $\tri KOH$ is drawn through~$G$, equal
and parallel to~$AC$, so that $G$ is its midpoint. The half-chord $CD$
is (by similar triangles, $\tri BCD \sim \tri CED$) the mean
proportional of $BD$ and~$DE$,
$$
BD \. DE = CD^2,
$$
consequently, $CD = OG$. 

Let $L$ now denote the barycenter of the combined figure
$\segment ABC \cup \tri KOH$. Proposition~\ref{pr:Huygens-Theoremata}
can now be restated as: \ $L = O$.

\begin{proof}
Suppose, \textit{arguendo}, that $L \neq O$. Then either $L$ is
``above'' $O$, that is, $L$ is an interior point of the radius~$OB$;
or $L$ is ``below'' $O$, i.e., $L$ is an interior point of the
radius~$OE$. We shall show that the first case is impossible; and by a
similar argument, the second case will also be ruled out.

Assume, then, that $L$ lies on $OB$ with $L \neq O$. Choose a
magnitude of area $\sM$ such that
$$
\frac{OG}{OL} = \frac{(\tri ABC \cup \tri KOH)}{\sM} \,,
$$
where we view the triangles as laminas of uniform density. Then cover
the combined figure $\sH$ with parallelograms of equal width,
symmetric with respect to the diameter~$BE$, such that if $\sR$ is the
total area of the \emph{excess parts} of the cover, then $\sR < \sM$.
That is possible since, by taking thinner parallelograms, \emph{we can
make the total excess area as small as we please}. Hence
$$
\frac{(\tri ABC) + (\tri KOH)}{\sR}
> \frac{(\tri ABC) + (\tri KOH)}{\sM} = \frac{OG}{OL}
$$
and therefore also:
$$
\frac{(\segment ABC) + (\tri KOH)}{\sR} > \frac{OG}{OL}\,.
$$

Now define the point $N$ on the prolongation of $BG$ \emph{below} the
base $KH$ such that
$$
\frac{ON}{OL} = \frac{(\segment ABC) + (\tri KOH)}{\sR}\,.
$$
Next, draw the diameter $IJ$ through $O$, parallel to the bases $AC$
and~$KH$. Take two \emph{corresponding parallelograms} $RQ$ and
$WT$ in the cover with \emph{respective barycenters}
$$
V \mot{in} RQ  \word{and}  X \mot{in} WT,
$$
and draw their midline $ZUMF$, cutting the line $IJ$ at~$Y$. Finally,
on the diameter $BE$ take the point $S$ such that $SE = BP$.

\begin{lema} 
\label{lm:barbells}
$Y$ is the barycenter of the combined parallelograms \ $RQ \cup WT$.
\end{lema}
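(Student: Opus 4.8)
The plan is to prove that the two parallelograms $RQ$ and $WT$ \emph{balance about the horizontal diameter $IJ$}, in the Archimedean spirit of the law of the lever, and then to combine this with the fact that both partial barycenters $V$ and $X$ lie on the common midline $ZUMF$ of the pair. Since $RQ$ and $WT$ are cut from one and the same vertical strip, they have equal width and the \emph{same} midline $ZUMF$; hence the barycenter of their union $RQ \cup WT$, being a weighted average of $V$ and $X$, also lies on that midline. Once the union is shown to balance on $IJ$, its barycenter must be the unique common point of the midline and $IJ$, namely~$Y$, which is exactly the assertion of the lemma.

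First I would record the moment of a single parallelogram about $IJ$. A parallelogram of width~$w$ whose two edges parallel to $IJ$ stand at distances $p$ and $q$ from $IJ$ (on the same side) has its centroid at mid-height, so its moment about $IJ$ is
$$
w\,(p - q)\cdot\tfrac12(p + q) = \tfrac{w}{2}\,(p^2 - q^2).
$$
Let $r$ be the radius, $a := BD$, $b := AC$, and let $t$ denote the distance from the inner vertical edge of the chosen strip to the diameter $BE$. For the segment parallelogram $RQ$, lying on the $B$-side of $IJ$, the far edge sits on the arc and the near edge on the chord $AC$; the circle equation (Pythagoras at~$O$) gives $(\text{far distance})^2 = r^2 - t^2$, while $(\text{near distance})^2 = OD^2 = (r - a)^2$. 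For the triangle parallelogram $WT$, lying on the $E$-side of $IJ$, the far edge sits on the base $KH$ and the near edge on a side $OH$ (or $OK$); here $(\text{far distance})^2 = OG^2 = (b/2)^2$, and --- because the construction forces $OG = CD = \tfrac12 KH$, so that triangle $OGH$ is right isosceles and the sides $OK, OH$ bisect the right angle at~$O$ --- every point of those sides is \emph{equidistant} from $IJ$ and $BE$, whence $(\text{near distance})^2 = t^2$ for the same inner edge.

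The punch line is the cancellation. The moment of $RQ$ toward $B$ is $\tfrac{w}{2}\bigl[(r^2 - t^2) - (r - a)^2\bigr]$ and the moment of $WT$ toward $E$ is $\tfrac{w}{2}\bigl[(b/2)^2 - t^2\bigr]$; the two $t^2$ terms are identical, so the parallelograms balance about $IJ$ precisely when $r^2 - (r - a)^2 = (b/2)^2$. But this is exactly the defining relation $OG^2 = BD \. DE$ of the construction, since $r^2 - (r - a)^2 = a(2r - a) = BD \. DE$ and $OG = CD = b/2$. Thus every corresponding pair balances on $IJ$ \emph{irrespective of which strip was chosen}, which is the whole force of the lemma. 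The step I would watch most carefully is the bookkeeping for the \emph{actual} cover parallelograms --- whose tops are taken at the curve height over the inner edge~$t$ rather than over the strip's midpoint, and whose moments carry opposite signs above and below $IJ$ --- but since each moment depends on the abscissa only through the single quantity $t^2$, the exact edge used drops out and the cancellation survives verbatim for the finite parallelograms, not merely in an infinitesimal limit.
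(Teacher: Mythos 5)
Your proof is correct, and at its core it is the same law-of-the-lever balance that the paper (following Huygens) establishes; the difference is one of style and organization rather than of mathematical substance. The correspondence is exact: your moment formula $\tfrac{w}{2}(p^2-q^2)$ for a strip is what the paper obtains synthetically, via the identity $ZY^2 - UY^2 = 2\,ZU \cdot XY$ for the triangle's parallelogram and via $BD \cdot DE - BP \cdot PE = DS \cdot DP = 2\,YV \cdot FM$ (using the auxiliary point $S$ with $SE = BP$) for the segment's parallelogram; your observation that $OG = GH$ makes $\tri OGH$ right isosceles, so that the near edge of $WT$ lies at distance exactly $t$ from $IJ$, is what the paper extracts from its similar-triangle chain $CD/PR = GH/GT' = OH/OW = ZY/UY$; and your final cancellation of $t^2$, reducing the balance condition to $r^2 - (r-a)^2 = (b/2)^2$, i.e.\ to $OG^2 = BD \cdot DE$, is precisely the point where the paper invokes the defining relation of~$G$. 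The one organizational difference is that you balance the pair about the \emph{line} $IJ$ and then intersect with the common midline, whereas the paper derives the lever relation $YV/XY = (\pgram WT)/(\pgram RQ)$ directly about the \emph{point}~$Y$; since both centroids $V$ and $X$ lie on the midline, these are equivalent statements. What each version buys: yours is shorter and makes transparent why the choice of strip is irrelevant (everything depends on the abscissa only through $t^2$, which cancels), and your closing remark correctly disposes of the only delicate bookkeeping issue, namely that the cover rectangles have their tops at the inner-edge height rather than at a mean height; the paper's version, by contrast, deliberately avoids coordinates and any algebraic moment formula so as to remain inside the Euclidean--Archimedean toolkit of ratios, mean proportionals, and the lever -- which is the stated purpose of the appendix, to exhibit Huygens' own proof rather than a modern replacement for it.
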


\begin{proof}
By similar triangles,
$$
\frac{CD}{PR} = \frac{GH}{GT'} = \frac{OH}{OW} = \frac{ZY}{UY}\,.
$$
The numerator and denominator on the first of these ratios are 
mean proportionals:
$$
BD \. DE = CD^2  \word{and} BP \. PE = PR^2,
$$
and therefore
\begin{equation*}
\frac{BD \. DE}{BP \. PE} = \frac{CD^2}{PR^2} = \frac{ZY^2}{UY^2}\,.
\end{equation*}
It follows immediately that
\begin{equation}
\frac{BD \. DE}{BD \. DE - BP \. PE} 
= \frac{ZY^2}{ZY^2 - UY^2}\,.
\label{eq:quadratic-ratio} 
\end{equation}
The denominator on the left-hand side can be simplified thus:
\begin{align}
BD \. DE - BP \. PE
&= (BP + PD) \. DE -  BP \. (PD + DE) = PD \. (DE - BP)
\nonumber \\
&= (DE - SE) \. DP = DS \. DP,
\label{eq:diameter-trick} 
\end{align}
by the location of the point~$S$. 

Next, notice that
\begin{equation}
ZY^2 - UY^2 = ZU^2 + 2\,ZU \. UY
\label{eq:lower-rectangle} 
\end{equation}
as an instance of the algebraic identity
$(r + s)^2 - r^2 = s^2 + 2rs$. Since $X$ is the midpoint of the line
$ZU$, hence $ZX = XU = \half ZU$, the right-hand side
of~\eqref{eq:lower-rectangle} becomes
$$
4\,XU^2 + 4\,XU \. UY = 4\,XU \. (XU + UY) = 4\,XU \. XY,
$$
or equivalently, $ZU^2 + 2\,ZU \. UY = 2ZU \. XY$. Then
\eqref{eq:lower-rectangle} simplifies to
\begin{equation}
ZU^2 - UY^2 = 2ZU\. XY.
\label{eq:lower-rectangle-bis} 
\end{equation}

On substituting the relations \eqref{eq:diameter-trick} and
\eqref{eq:lower-rectangle-bis}, the
equality \eqref{eq:quadratic-ratio} reduces to
\begin{equation}
\frac{BD \. DE}{DS \. DP} = \frac{ZY^2}{2ZU \. XY}.
\label{eq:magic-ratio} 
\end{equation}

\medskip

Now, by the definition of the point $G$ using
\eqref{eq:position-of-G},
\begin{equation}
BD \. DE = OG^2 = ZY^2.
\label{eq:position-of-G-bis} 
\end{equation}
Moreover, since $SE = BP$ implies $OS = OP = OD + DP$, it follows that
$$
DS = OD + OS = 2\,OD + DP = 2\,YM + FM = 2(YM + MV) = 2\,YV.
$$
Therefore,
$$
DS \. DP = 2\,YV \. DP = 2YV \. FM.
$$
This, together with \eqref{eq:magic-ratio} and
\eqref{eq:position-of-G-bis}, allows us to conclude that
$YV \. FM = XY \. ZU$.

And that is equivalent to
$$
\frac{YV}{XY} = \frac{ZU}{FM} = \frac{(\pgram WT)}{(\pgram RQ)}\,.
$$
By the law of the lever, this means that $Y$ \emph{is the barycenter
of the two parallelograms $RQ \cup WT$ taken together}. This completes
the proof of the lemma.
\end{proof}

The same proof shows that the barycenter of \emph{every} pair of
corresponding parallelograms in the covering has its barycenter on the
line~$IJ$. Since the barycenter of the covering also lies on the
diameter~$BE$, it coincides with the center~$O$ of the circle.

\medskip

But now $L$ is the barycenter of the combined figure
$\segment ABC \cup \tri KOH$. Thus, the barycenter of just the excess
parts, of area $\sR$, is necessarily on the prolongation of the line
$LO$ such that
$$
\frac{\text{the added part}}{OL}
= \frac{(\segment ABC) + (\tri KOH)}{\sR} = \frac{ON}{OL} \,.
$$
The endpoint $N$ of this added part is therefore \emph{the barycenter
of the added part.}

But, \emph{that cannot be so!} Indeed, a line drawn through $N$
parallel to the base $KH$ of $\tri KOH$ leaves all of the area
portions which form $\sR$ on the opposite side of that line to~$N$,
which contradicts the nature of a barycenter.

\medskip

This rules out the first case, namely, that $L$ is a point of the
radius~$OB$, with $L \neq O$.

Were $L$ to be a point of the opposite radius~$OE$, with $L \neq O$, a
similar proof shows that the barycenter of $\sR$ would lie \emph{above
the segment $ABC$}, which again is absurd.

Therefore $L = O$, as claimed.
\end{proof}


\section{An area inequality} 
\label{app:area-comparison}

We may reconsider the comparison of the circular and parabolic 
segments in the second proof of Theorem~\ref{th:Huygens-XVI}. For 
that, we reproduce Figure~\ref{fg:Hofmann-six}, at a larger scale, in 
Figure~\ref{fg:overlap}.

\begin{figure}[htb]
\centering
\begin{tikzpicture}[scale=1.8, rotate=-90] 
\coordinate (O) at (0,0) ;
\coordinate (B) at (180:2cm) ;
\coordinate (A) at (100:2cm) ;
\draw (A) ++ (0,1) coordinate (A1) ;
\coordinate (C) at (-100:2cm) ;
\draw (C) ++ (0,-1) coordinate (C1) ;
\coordinate (D) at ($ (A)!0.5!(C) $) ;
\coordinate (P) at ($ (B)!0.6!(D) $) ;
\draw (P) ++ (0,2) coordinate (P1) ;
\draw (P) ++ (0,-2) coordinate (P2) ;
\path[name path=bigOh] (B) circle(2cm) ;
\path[name path=Aup] (A) -- (A1) ;
\path[name path=Cdown] (C) -- (C1) ;
\path[name path=Pup] (P1) -- (P2) ;
\path[name intersections={of=Pup and bigOh, by={Q,R}}] ;
\draw[blue, name path=Para] 
    plot[variable=\y,smooth,domain=0:1.59]
    ({-2.0 + 0.66*\y^2},{1.4*\y}) ; 
\draw[blue, name path=Paraleft] 
    plot[variable=\y,smooth,domain=0:1.59]
    ({-2.0 + 0.66*\y^2},{-1.4*\y}) ; 
\path[name intersections={of=Aup and Para, by={E,E1}}] ;
\path[name intersections={of=Cdown and Paraleft, by={F,F1}}] ;
\draw[->] (O) node[below left] {$\sst(0,b-r)$} node[below right] {$O$}
   -- (D) node[above left] {$\sst(0,0)$} node[above right] {$D$}
   -- (B) node[above right] {$\sst(0,b)$} node[above left] {$B$}
   -- ++(-0.5,0) ;
\draw (P) node[above left] {$\sst(0,\frac{2}{5} b)$} 
          node[above right] {$P$}
   -- (Q) node[above right=-2pt] {$\sst(p,\frac{2}{5} b)$} 
          node[below left] {$Q$} ;
\draw[->] (F) node[below left] {$\sst(-c,0)$} node[above left] {$F$}
   -- (C) node[below right] {$\sst(-a,0)$} node[below=2pt] {$C$}
   -- (A) node[below left] {$\sst(a,0)$} node[below=2pt] {$A$}
   -- (E) node[below right] {$\sst(c,0)$} node[above right] {$E$}
   -- ++(0,0.5) ;
\draw (A) arc(100:260:2cm) ; 
\draw (P) -- (R) node[below right] {$R$}
   node[above left=-2pt] {$\sst(-p,\frac{2}{5} b)$} ;
\draw[dashed] (A) -- (O) -- (C) ; 
\foreach \pt in {A,B,C,D,E,F,O,P,Q,R} \draw (\pt) node{$\bull$} ;
\end{tikzpicture}
\caption{Circular versus parabolic segments, again}
\label{fg:overlap} 
\end{figure}

We find it convenient to introduce Cartesian coordinates, as follows.
Place the origin $(0,0)$ at the midpoint $D$ of the chord $AC$ and
extend that chord to the $x$-axis; thus, $A = (a,0)$ and $C = (-a,0)$
for some $a > 0$. The vertex of the circle segment will lie at
$B = (0,b)$ for some $b > 0$.

If the circle has radius $r$, its center lies at $(0,b - r)$. Hence,
$$
a^2 + (r - b)^2 = r^2.
$$
The barycenter $P$ of the parabolic segment lies at $(0,\frac{2}{5}b)$
by Archimedes' theorem. The parabola and circle intersect, by 
construction, at the points $Q = (p,\frac{2}{5}b)$ and
$R = (-p,\frac{2}{5}b)$. The parabola cuts the $x$-axis (the extended
chord $AC$) at $E = (c,0)$ and $F = (-c,0)$.

The equation of the circle is $x^2 + (y + r - b)^2 = r^2$, or better,
in the semicircle including the arc $ABC$,
$$
y = \sqrt{r^2 - x^2} - r + b.
$$
The parabola with vertex at $B$, passing through $Q$ and~$R$, has the
equation
\begin{equation}
y = b - \frac{bx^2}{a^2 + \frac{2}{5}b^2} 
\equiv b - \frac{bx^2}{2rb - \frac{3}{5}b^2} \,.
\label{eq:parabola} 
\end{equation}
Since $(\pm p,\frac{2}{5}b)$ are the intersections of the parabola 
and the circle, we obtain
$$
p = \sqrt{\frac{3}{5}\,\Bigl( a^2 + \frac{2}{5}b^2 \Bigr)}
=: \frac{\sqrt{3}}{5}\,\sqrt{5a^2 + 2b^2}.
$$
The parabola \eqref{eq:parabola} cuts the $x$-axis at $(\pm c,0)$,
where
$$
c = \frac{1}{\sqrt{5}}\,\sqrt{10rb - 3b^2}.
$$

Here is a question raised by Figure~\ref{fg:overlap}: which area is
greater, the sliver $BQ$ between parabola and circle, or the curved
triangle $QAE$? If we also compare their mirror images, the sliver
$BR$ and the curved triangle $RCF$, we see that twice their difference
equals the difference in areas between the circular segment $ABC$ and
the parabolic segment $EBF$. Let us calculate those areas.

In his \textit{Quadrature of the Parabola}, Proposition~17, Archimedes
already computed the area of such a parabolic segment
\cite[pp.~246]{Heath1912} as $4/3$ the area if the inscribed triangle
with the same base and height. Therefore,
\begin{equation}
(\segment EBF) = \frac{4}{3}\,(\tri EBF) = \frac{4}{3}\,bc
= \frac{4b}{3\sqrt{5}}\,\sqrt{10rb - 3b^2}.
\label{eq:parabolic-area} 
\end{equation}

For the circular segment $ABC$, we subtract the triangle $\tri OAC$
from the \textit{sector} $OAC$. This triangle has base $AC = 2a$, and
altitude $OD = r - b$, so
$$
(\tri OAC) = a(r - b) = (r - b) \sqrt{2rb - b^2}.
$$
The sector has angle $\angle AOC =: \th$ with
$0 < \th < \pi$. From $\tri DOA$ 
one sees that $\cos(\half\th) = (r - b)/r$, so that 
$$
\frac{\th}{2} = \frac{\pi}{2} - \arcsin \frac{r - b}{r}\,.
$$
Now the area of the sector $OAC$ is
$$
(\sector OAC) = \frac{1}{2}\,r^2 \th 
= \frac{\pi r^2}{2} - r^2 \arcsin \frac{r - b}{r}
$$
and thus:
\begin{equation}
(\segment ABC) = \frac{\pi r^2}{2} - r^2 \arcsin \frac{r - b}{r} 
- (r - b) \sqrt{2rb - b^2}.
\label{eq:circular-area} 
\end{equation}
The difference in areas between the sliver~$BQ$ 
and the wedge $QAE$ is half the difference between 
\eqref{eq:circular-area} and
 \eqref{eq:parabolic-area}, namely:
$$
\frac{\pi r^2}{4} - \frac{1}{2} r^2 \arcsin \frac{r - b}{r} 
- \frac{r - b}{2}\,\sqrt{2rb - b^2} 
- \frac{2b}{3\sqrt{5}}\,\sqrt{10rb - 3b^2}.
$$

We claim that this quantity is \textit{negative} for all
$0 < b \leq r$. Since all its terms are areas, we may rescale by
setting $x := b/r$, $0 < x \leq 1$.

\begin{lema} 
The real function
$$
f(x) := \frac{\pi}{4} - \frac{1}{2} \arcsin(1 - x) 
- \frac{1}{2}\,(1 - x)\sqrt{2x - x^2} 
- \frac{2x}{3\sqrt{5}}\,\sqrt{10x - 3x^2}
$$
satisfies $f(x) < 0$ for $0 < x \leq 1$.
\end{lema}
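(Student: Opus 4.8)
The plan is to prove that $f$ is strictly decreasing on $(0,1]$ while $f(0)=0$, whence $f(x)<f(0)=0$ for every $x$ in $(0,1]$. First I would verify the boundary behaviour. As $x\to0^+$ the two terms carrying a factor $\sqrt{2x-x^2}$ or $x$ vanish, while $\arcsin(1-x)\to\frac{\pi}{2}$, so that $\lim_{x\to0^+}f(x)=\frac{\pi}{4}-\frac12\cdot\frac{\pi}{2}=0$; thus $f$ extends continuously to $[0,1]$ with $f(0)=0$. (At the other endpoint one finds $f(1)=\frac{\pi}{4}-\frac{2\sqrt7}{3\sqrt5}<0$, a reassuring sanity check.)

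Next I would differentiate and simplify. Using $1-(1-x)^2=2x-x^2$, the derivative of $-\frac12\arcsin(1-x)$ is $\dfrac{1}{2\sqrt{2x-x^2}}$, while a short computation gives $\dfrac{d}{dx}\bigl[(1-x)\sqrt{2x-x^2}\bigr]=\dfrac{2x^2-4x+1}{\sqrt{2x-x^2}}$. The pleasant surprise is that the first two terms of $f'$ collapse:
\[
\frac{1}{2\sqrt{2x-x^2}}-\frac{2x^2-4x+1}{2\sqrt{2x-x^2}}
=\frac{4x-2x^2}{2\sqrt{2x-x^2}}=\sqrt{2x-x^2}.
\]
For the remaining term one has $\dfrac{d}{dx}\bigl[x\sqrt{10x-3x^2}\bigr]=\dfrac{3x(5-2x)}{\sqrt{10x-3x^2}}$. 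Pulling out the common factor $\sqrt{x}$ then yields the compact form
\[
f'(x)=\sqrt{x}\,\Bigl[\sqrt{2-x}-\frac{2(5-2x)}{\sqrt{5(10-3x)}}\Bigr].
\]

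It remains to show that the bracket, call it $B(x)$, is negative on $(0,1]$. Both $\sqrt{2-x}$ and $\dfrac{2(5-2x)}{\sqrt{5(10-3x)}}$ are positive there, so $B(x)<0$ is equivalent to the squared inequality $5(2-x)(10-3x)<4(5-2x)^2$. Expanding, the left side is $100-80x+15x^2$ and the right side is $100-80x+16x^2$, so the inequality reduces to $15x^2<16x^2$, i.e.\ to $x^2>0$ --- true for every $x\neq0$. Hence $B(x)<0$, so $f'(x)<0$ on $(0,1]$, and with $f(0)=0$ we conclude $f(x)<0$ for $0<x\le1$.

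The argument is essentially routine once it is set up; the only step needing care is the algebra of $f'$. The main points to watch are the cancellation in the first two terms (which is exactly what makes the final squared inequality so clean) and the check that both sides of that inequality are positive before squaring. I would display the two derivative computations and then let the one-line reduction $15x^2<16x^2$ finish the job.
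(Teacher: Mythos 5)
Your proposal is correct and follows essentially the same route as the paper: both note $f(0)=0$, compute the same derivative $f'(x)=\sqrt{2x-x^2}-\dfrac{2x(5-2x)}{\sqrt{5}\,\sqrt{10x-3x^2}}$, and show it is negative by squaring a comparison of two positive quantities, reducing to the identity that the difference of the two sides is exactly $x^2>0$. Your factoring out of $\sqrt{x}$ is only a cosmetic variant of the paper's cancellation of $x$ inside its chain of equivalent inequalities.
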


\begin{proof}
Notice first that $f(0) = 0$. So it is enough to show that $f$ is
strictly decreasing in the interval $0 \leq x \leq 1$; which will
follow from $f'(x) < 0$ for $0 < x < 1$.

The derivative is easily computed:
$$
f'(x) = \sqrt{2x - x^2} 
- \frac{2x(5 - 2x)}{\sqrt{5}\,\sqrt{10x - 3x^2}}\,.
$$
This is a difference of two positive terms; so we must now check the 
following equivalent inequalities:
\begin{alignat*}{2}
\sqrt{2x - x^2} & < \frac{2x(5 - 2x)}{\sqrt{5}\,\sqrt{10x - 3x^2}}
&\word{for} & 0 < x < 1, 
\\
\sqrt{5(2x - x^2)(10x - 3x^2)} & < 2x(5 - 2x) &\word{for} & 0 < x < 1,
\\
\sqrt{5(2 - x)(10 - 3x)} & < 10 - 4x &\word{for} & 0 < x < 1, 
\\
5(2 - x)(10 - 3x) & < (10 - 4x)^2 &\word{for} & 0 < x < 1.
\end{alignat*}
But the last inequality comes immediately from
$$
(10 - 4x)^2 - 5(2 - x)(10 - 3x)
= (100 - 80x + 16x^2) - 5(20 - 16x + 3x^2) = x^2 > 0.
\eqno \qed
$$
\hideqed
\end{proof}

Therefore, $0 > f(x) > f(1)$ whenever $0 < x < 1$. Actually, the 
lower bound is quite small:
$$
f(1) = \frac{\pi}{4} - \frac{2\sqrt{35}}{15} 
\doteq 0.785398 - 0.788811 = -0.003413.
$$
Thus, in a circular segment less than a semicircle, the difference in
areas between wedge and sliver is less than about $r^2/300$.


\subsection*{Acknowledgments}

We thank Iosif Pinelis for discussions on MathOverflow
\cite{Pinelis2024} in regard to Huygens' final inequality. The authors
acknowledge the good offices of the Centro de Investigación en
Matemática Pura y Aplicada via a research project at the University of
Costa Rica.


\begin{thebibliography}{17}

\bibitem{BhayoS2015}
Barkat A. Bhayo and József Sándor,
``On certain old and new trigonometric and hyperbolic inequalities'',
Anal. Math. \textbf{41} (2015), 3--15.

\bibitem{Chouikha2022}
Abd Raouf Chouikha, 
``Other approaches related to Huygens trigonometric inequalities'',
preprint hal-03769843, Paris, 2022.

\bibitem{Cusanus1514}
Nikolaus of Cusa, 
\textit{De mathematicae perfectione}, Opera 
(Paris 1514, Basel 1565).

\bibitem{Eutokios}
Eutokios of Ascalon,
``Commentarii in libros Archimedis De sphaera et cylindro, 
in Dimensionem circuli et in libros De planorum aequilibris'';
in: \textit{Archimedis Opera Omnia} III, 2nd edition,
J. L. Heiberg, ed., Leipzig, 1915, pp.~1--448;
and 
``Commentarii in Conica'', 
in: \textit{Apollonii Pergaei quae graece exstant},
J. L. Heiberg, ed., Leipzig, 1893, pp.~168--361.

\bibitem{Heath1912}
Thomas L. Heath, 
\textit{The Works of Archimedes, 
with the \emph{Method} of Archimedes},
Cambridge University Press, 1897 and 1912; 
reissued by Dover, New York, 1953.

\bibitem{Heron}
Heron of Alexandria,
\textit{Metrika},
Teubner, Leipzig, 1903.

\bibitem{Hofmann1966}
Joseph Ehrenfried Hofmann, 
``Über die Kreismessung von Chr. Huygens, ihre Vorgeschichte, ihren
Inhalt, ihre Bedeutung und ihr Nachwirken'',
Arch. Hist. Exact Sci. \textbf{3} (1966), 102--136.

\bibitem{Huygens-v12}
Christiaan Huygens,
\textit{Oeuvres Complètes de Christiaan Huygens}, Vol.~12,
published by La Société Hollandaise des Sciences, 1910.

\bibitem{Huygens1654}
Christiaan Huygens,
\textit{De Circuli Magnitudine Inventa},
apud Johannem et Danielem Elzevier, 1654.

\bibitem{Huygens1651}
Christiaan Huygens,
``Theoremata de quadratura hyperboles, ellipsis et circuli, ex dato
portionum gravitatis centro'',
Opera Varia, 1651.
In: \textit{Oeuvres Complètes de Christiaan Huygens},
op.~cit., Vol.~11, 1910; pp.~297--308.

\bibitem{Nelsen2020}
Roger B. Nelsen,
``Elementary proofs of the trigonometric inequalities of Huygens,
Cusa, and Wilker'',
Math. Mag. \textbf{93} (2020), 276--283.

\bibitem{Pearson1923}
Hazel Pearson,
\textit{The area of a circle by the method of Archimedes, including
a translation of ``De circuli magnitudine inventa'' by Christiaan
Huyghens},
M.A. thesis, Boston University, 1923.
Available at \url{https://hdl.handle.net/2144/5291}

\bibitem{Pinelis2024}
Iosif Pinelis,
``Huygens' final unproved inequality'',
mathoverflow, July 23, 2024;
available at \url{https://mathoverflow.net/questions/475597}

\bibitem{Richardson1927}
Lewis Fry Richardson,
``The deferred approach to the limit, part I. -- single lattice,''
Philos. Trans. Roy. Soc. London, Ser. A \textbf{226} (1927), 299--349.

\bibitem{Rudio1892}
Ferdinand Rudio,
\textit{Archimedes, Huygens, Lambert, Legendre. Vier Abhandlungen
über die Kreismessung\dots},
Teubner, Leipzig, 1892.

\bibitem{Schuh1914}
Frederik Schuh,
``Sur quelques formules approximatives pour la circonférence du cercle
et sur la cyclométrie de Huygens'',
Archives Néerlandaises des Sciences Exactes et Naturelles (IIIA)
\textbf{3} (1913/1914), 1--178 and 229---323.

\bibitem{Snell1621}
Willebrord Snellius, 
\textit{Ciclometricus, de circuli dimensione},
Lugduni Batavorum, 1621.

\end{thebibliography}
\end{document}